\author{Michael Bailey}
\title{On the local and global classification of generalized complex structures}
\begin{document}


\vfuzz2pt 
\hfuzz2pt 
\newtheorem{thm}{Theorem}[section]
\newtheorem{cor}[thm]{Corollary}
\newtheorem{lem}[thm]{Lemma}
\newtheorem{prop}[thm]{Proposition}
\newtheorem*{main thm}{Main Theorem}
\newtheorem*{main lem}{Main Lemma}
\theoremstyle{definition}
\newtheorem{defn}[thm]{Definition}
\newtheorem{notn}[thm]{Notation}
\newtheorem{example}[thm]{Example}
\newtheorem{exercise}[thm]{Exercise}
\newtheorem{question}[thm]{Question}
\newtheorem{answer}[thm]{Answer}
\theoremstyle{remark}
\newtheorem{rem}[thm]{Remark}
\numberwithin{equation}{section}
\newcommand{\norm}[1]{\Lieeft\Vert#1\right\Vert} \newcommand{\abs}[1]{\Lieeft\vert#1\right\vert}
\newcommand{\set}[1]{\Lieeft\{#1\right\}}
\newcommand{\Hom}{\textnormal{Hom}}
\newcommand{\Real}{\mathbb R}
\newcommand{\D}{\mathbb D}
\newcommand{\R}{\mathbb R}
\newcommand{\C}{\mathbb C} 
\newcommand{\Z}{\mathbb Z}
\newcommand{\N}{\mathbb N}
\newcommand{\Tc}{\mathbb T} 
\newcommand{\Lie}{\mathcal{L}} 
\newcommand{\J}{\mathcal{J}}
\newcommand{\T}{\textsf{T}} 
\newcommand{\I}{\mathcal{I}}
\newcommand{\tens}{\otimes} 
\newcommand{\dsum}{\oplus} 
\newcommand{\x}{\times}
\newcommand{\proj}{\mathbb P} 
\newcommand{\iso}{\simeq} 
\newcommand{\isoto}{\overset\sim\to}
\newcommand{\dt}{\frac{\partial}{\partial t}} 
\newcommand{\eps}{\varepsilon}
\renewcommand{\d}{\textrm{d}}
\renewcommand{\phi}{\varphi} 
\renewcommand{\to}{\longrightarrow}
\newcommand{\oto}[1]{\overset{#1}\to} 
\newcommand{\into}{\hookrightarrow}
\newcommand{\ointo}[1]{\overset{#1}\into} 
\renewcommand{\mapsto}{\longmapsto}
\newcommand{\actson}{\circlearrowright} 
\renewcommand{\i}{\iota} 
\newcommand{\comp}{\circ}
\newcommand{\sr}{\mathcal} 
\newcommand{\A}{\mathcal{A}} 
\renewcommand{\O}{\mathcal{O}}
\newcommand{\B}{\mathcal{B}} 
\newcommand{\vH}{\check{H}} 
\newcommand{\gcb}{\mathfrak{b}}
\newcommand{\cnj}{\overline} 
\newcommand{\gca}{\mathfrak{a}} 
\newcommand{\gcg}{\mathfrak{g}}
\newcommand{\gch}{\mathfrak{h}} 
\newcommand{\gcl}{\mathfrak{l}} 
\newcommand{\gci}{\mathfrak{i}}
\newcommand{\ad}{\textrm{ad}} 
\newcommand{\Ad}{\textrm{Ad}} 
\newcommand{\Aut}{\textrm{Aut}}
\newcommand{\Ann}{\textrm{Ann}} 
\renewcommand{\Re}{\textrm{Re}} 
\newcommand{\del}{\partial}
\newcommand{\Cour}[1]{[\![#1]\!]}
\newcommand{\pair}[1]{\left\langle #1 \right\rangle}
\newcommand{\fl}[1]{\left\lfloor #1 \right\rfloor}
\newcommand{\form}{\pair{\cdot,\cdot}} 
\newcommand{\cupp}{\smile} 
\newcommand{\capp}{\frown}
\newcommand{\moneq}{\overset{m}{\sim}} 
\renewcommand{\ddot}{\bullet} 
\newcommand{\Sym}{\textrm{Sym}}
\renewcommand{\^}{\wedge} 
\renewcommand{\=}{\bar} 
\renewcommand{\epsilon}{\varepsilon}
\newcommand{\hide}[1]{} 
\newcommand{\lra}{\longrightarrow}
\renewcommand{\H}{\sr H}
\newcommand{\incirc}[1]{\mbox{\textcircled{#1}}}
\newcommand{\Id}{\textrm{Id}}
\newcommand{\LP}{\mathfrak{L}}
\newcommand{\im}{\textrm{im}}

\newcommand{\article}[1] {}
\newcommand{\thesis}[1] {#1}

\begin{preliminary}

\maketitle


\begin{abstract}
We study a number of local and global classification problems in \emph{generalized complex geometry}.  Generalized complex geometry is a relatively new type of geometry which has applications to string theory and mirror symmetry.  Symplectic and complex geometry are special cases.

In the first topic, we characterize the local structure of generalized complex manifolds by proving that a generalized complex structure near a complex point arises from a holomorphic Poisson structure.  In the proof we use a smoothed Newton's method along the lines of Nash, Moser and Conn.

In the second topic, we consider whether a given regular Poisson structure and transverse complex structure come from a generalized complex structure.  We give cohomological criteria, and we find some counterexamples and some unexpected examples, including a compact, regular generalized complex manifold for which nearby symplectic leaves are not symplectomorphic.

In the third topic, we consider generalized complex structures with nondegenerate type change; we describe a generalized Calabi-Yau structure induced on the type change locus, and prove a local normal form theorem near this locus.  Finally, in the fourth topic, we give a classification of generalized complex principal bundles satisfying a certain transversality condition; in this case, there is a \emph{generalized flat connection}, and the classification involves a monodromy map to the Courant automorphism group.
\end{abstract}




\begin{acknowledgements}
I would like to thank my supervisors, Marco Gualtieri and Yael Karshon, for their knowledge, guidance and patience---and especially their constructive criticism; Ida Bulat and the rest of the staff of the Department for helping me avoid various crises; and most of all, unnamed friends and loved ones for taking me seriously and helping make these last few years worthwhile.
\end{acknowledgements}

\tableofcontents



\end{preliminary}



\chapter{Introduction}

In this thesis we study several problems in the local and global classification of generalized complex manifolds.  Generalized complex geometry is a generalization of both symplectic and complex geometry, introduced by Hitchin \cite{Hitchin2003}, and developed by Gualtieri (we refer to his recent publication \cite{Gualtieri2011} rather than his thesis), Cavalcanti \cite{Cavalcanti} and others.  Its applications include the study of 2-dimensional supersymmetric quantum field theories, which occur in topological string theory, as well as compactifications of string theory with fluxes \cite{Grana}, and the study of mirror symmetry \cite{Ben-Bassat} \cite{CavalcantiGualtieri2011}.

Whereas a complex manifold may be defined by an integrable complex structure on the tangent bundle, a generalized complex manifold is given by an integrable (in some sense) complex structure on an extension of the tangent bundle by the cotangent bundle, called a \emph{Courant algebroid}.  Chapter-by-chapter, we introduce a number of points of view on generalized complex structures as needed---as deformations of complex structures, as complex pure spinor line bundles, as complex structures on abstract Courant algebroids.

The chapters of this work are being prepared for publication separately, and this fact is evident in the organization of the work.  For example, some introductory material is reiterated.

\section{Summary of the chapters}

\subsection*{Chapter 2: Local holomorphicity of generalized complex structures}

A generalized complex structure determines a Poisson structure and, transverse to its symplectic leaves, a complex structure.  In fact, Gualtieri showed that near a regular point of a generalized complex manifold, there is a local normal form constructed as the product of a symplectic manifold with a complex manifold \cite{Gualtieri2011}.  However, near points where the Poisson rank changes, much less was known.  Abouzaid and Boyarchenko \cite{AbouzaidBoyarchenko} showed that near any point of a generalized complex manifold there is a local model constructed as the product of a symplectic manifold with a generalized complex manifold whose Poisson tensor vanishes at the point (similar to Weinstein's result on the local normal form of a Poisson structure).

The question that remains of the local structure, then, is: what do generalized complex structures look like near a point with vanishing Poisson tensor, that is, near a point of complex type?  Every known example arose from a deformation of a complex structure by a \emph{holomorphic} Poisson structure.  In Chapter 1, we prove that this is always the case.  We use a Nash-Moser type rapidly-converging algorithm on shrinking neighbourhoods, in the style of Conn \cite{Conn}, adapting a more modern formalism of ``scaled $C^\infty$ spaces'' from Miranda, Monnier and Zung \cite{MirandaMonnierZung}.


\subsection*{Chapter 3: Generalized complex structures on symplectic foliations}

In this chapter, we give answers to the question ``when are a regular Poisson structure, along with a complex structure transverse to its leaves, induced by generalized complex structure?''  Working within the \emph{pure spinor} formalism of generalized complex geometry, we show that a necessary and sufficient condition is the existence of certain integrating forms satisfying a system of differential equations.  We give more concrete criteria in the case of smooth symplectic families over a complex manifold---these are fibre bundles with leafwise symplectic structure.  We find that, as a necessary condition, the relative cohomology of the symplectic form should be \emph{pluriharmonic} under the Gauss-Manin connection.  Thus, we find examples of smooth symplectic families which are not induced by generalized complex structures.

If a smooth symplectic family induced by a generalized complex structure is a surface bundle, or in higher dimensions if certain topological conditions are satisfied, we show that it is in fact a symplectic fibre bundle, that is, it has symplectic trivializations.  (We use this result in Chapter 4.)  However, these conditions are quite special, and we give an example in 6 dimensions of a smooth symplectic family which is generalized complex but which is not a symplectic fibre bundle, that is, its fibres are inequevalent as symplectic manifolds.

\subsection*{Chapter 4: Nondegenerate type-change loci}

\emph{Type change} in generalized complex geometry is the phenomenon that the number of symplectic and complex dimensions may vary from place to place on a generalized complex manifold.  In this chapter, a joint project with Marco Gualtieri, we study the phenomenon of type change in high dimensions.  (Some work on the 4-dimensional case is in \cite{CavalcantiGualtieri} and elsewhere.)  Subject to a nondegeneracy condition, we show that the type-change locus inherits the structure of a generalized Calabi-Yau manifold \cite{Hitchin2003}, that is, the \emph{canonical line bundle} of the reduced generalized complex structure on the locus has a closed section.  Furthermore, subject to compactness and connectedness conditions, we show that such a locus has the structure of a smooth symplectic family over an elliptic curve.

We give a characterization of neighbourhoods of nondegenerate type change loci, by showing that the generalized complex structure near such a locus induces a certain linear generalized complex structure on its normal bundle, and then showing that this structure is equivalent to the original generalized complex structure near the locus.

\subsection*{Chapter 5: Generalized complex flat principal bundles}

Even if two generalized complex structures share their induced Poisson and transverse complex structures, they may be inequivalent.  Thus, in this chapter, we turn from the existence question to a question of classification.  We consider a notion of group action on generalized complex manifolds.  When such an action is free and proper, and satisfies a complementarity condition between the orbits and the symplectic leaves, we will have a generalized complex principal bundle; furthermore, the symplectic leaves will induce a generealized complex flat connection.

Therefore, in order to study this case, we give a definition of generalized complex flat principal bundles, and prove that free and proper generalized complex group actions (satisfying the complementarity condition) are examples of such bundles.  We prove a classification result, analogous to the classification of (non-generalized) flat principal bundles by their monodromy.  Because the automorphism group of a Courant algebroid is larger than the diffeomorphism group of the underlying manifold, there are ``non-geometric'' degrees of freedom.  We study this situation in more detail when the group is a torus, in which case a very explicit description of the extra degrees of freedom is given.

\title{Local holomorphicity of generalized complex structures}
\author{Michael Bailey}
\date{December 2011}
\maketitle

\begin{abstract}
A generalized complex structure determines a Poisson structure and, transverse to its symplectic leaves, a complex structure.  Abouzaid and Boyarchenko (2004) showed that near any point of a generalized complex manifold there is a local model constructed as the product of a symplectic manifold with a generalized complex manifold whose Poisson tensor vanishes at the point (similar to Weinstein's result on the local normal form of a Poisson structure).

The question that remains of the local structure, then, is: what do generalized complex structures look like near a point with vanishing Poisson tensor, that is, near a point of complex type?  We prove that they arise as a deformation of a (non-unique) complex structure by a \emph{holomorphic} Poisson structure.  We use a Nash-Moser type rapidly-converging algorithm on shrinking neighbourhoods, in the style of Conn (1985), adapting a more modern formalism of ``scaled $C^\infty$ spaces'' from Miranda, Monnier and Zung (2011).
\end{abstract}
}

\thesis{
\chapter{Local holomorphicity of generalized complex structures}
}

\section{Local structure of generalized complex structures}

\begin{defn}
A generalized complex structure on a manifold $M$ is a complex structure, $J^2 = -1$, on the vector bundle $TM \dsum T^*M$, which is orthogonal with respect to the standard symmetric pairing, and whose $+i$-eigenbundle is involutive with respect to the Courant bracket, defined as follows: let $X,Y \in \Gamma(TM)$ and $\xi,\eta \in \Gamma(T^*M)$; then
\begin{equation}\label{bracket formula}
[X+\xi,Y+\eta] = [X,Y]_{\textnormal{Lie}} + \Lie_X \eta - \iota_Y d\xi.
\end{equation}
\end{defn}

The Courant bracket (actually, in this form, due to Dorfman \cite{Dorfman}) usually has an additional twisting term involving a closed 3-form.  However, every such bracket is in a certain sense locally equivalent to the untwisted bracket above, and since this \thesis{chapter}\article{paper} studies the local structure of generalized complex structures, we ignore the twisting for now.

\begin{example}\label{symplectic structure}
If $\omega:TM \to T^*M$ is a symplectic structure, then
$$J_\omega =
\left[\begin{array}{cc}
0 & -\omega^{-1} \\
\omega & 0
\end{array}\right]$$
is a generalized complex structure.
\end{example}

\begin{example}\label{complex structure}
If $I:TM \to TM$ is a complex structure, then
$$J_I = 
\left[\begin{array}{cc}
-I & 0 \\
0 & I^*
\end{array}\right]$$
is a generalized complex structure.
\end{example}

\begin{rem}
A generalized complex structure may be of complex type or symplectic type \emph{at a point $p$}, if it is of one of the above forms on $T_pM\dsum T^*_pM$, while having a different type elsewhere.
\end{rem}

\begin{example}
If $J_1$ is a generalized complex structure on $M_1$ and $J_2$ is a generalized complex structure on $M_2$, then $J_1 \times J_2$ is a generalized complex structure on $M_1 \times M_2$ in the obvious way.
\end{example}

\begin{defn}
A \emph{Courant isomorphism} $\Phi : TM\dsum T^*M \to TN\dsum T^*N$ is a vector bundle isomorphism of $TM\dsum T^*M$ to $TN \dsum T^*N$ which respects the Courant bracket, the symmetric pairing, and the projection to the tangent bundle.
\end{defn}

The first result on the local structure of generalized complex structures was due to Gualtieri \cite{Gualtieri2011}.  It was strengthened by Abouzaid and Boyarchenko \cite{AbouzaidBoyarchenko}, as follows:

\begin{thm}[Abouzaid, Boyarchenko]
If $M$ is a generalized complex manifold and $p\in M$, then there is a neighourhood of $p$ which is isomorphic, via Courant isomorphism, to a product of a generalized complex manifold which is symplectic everywhere and a generalized complex manifold which is of complex type at the image of $p$.
\end{thm}

This resembles Weinstein's local structure theorem for Poisson structures \cite{Weinstein}.  In fact, a generalized complex structure induces a Poisson structure, for which this result produces the Weinstein decomposition.

Thus, the remaining question in the local classification of generalized complex structures is: what kinds of generalized complex structures occur near a point of complex type?  There is a way in which any holomorphic Poisson structure (see Section \ref{holomorphic Poisson} below) induces a generalized complex structure (as described in Section \ref{deformation section}).  Our main result, then, is as follows:

\begin{main thm}
Let $J$ be a generalized complex structure on a manifold $M$ which is of complex type at point $p$.  Then, in a neighbourhood of $p$, $J$ is Courant-equivalent to a generalized complex structure induced by a holomorphic Poisson structure, for some complex structure near $p$.
\end{main thm}

This is finally proven in Section \ref{lemma implies theorem}.  Most of the work happens in earlier sections, in proving the following lemma:

\begin{main lem}
Let $J$ be a generalized complex structure on the closed unit ball $B_1$ about the origin in $\C^n$.  Suppose $J$ is a small enough deformation of the complex structure on $B_1$, and suppose that $J$ is of complex type at the origin.  Then, in a neighbourhood of the origin, $J$ is Courant-equivalent to a deformation of the complex structure by a holomorphic Poisson structure on $\C^n$.
\end{main lem}

In Section \ref{deformation section}, we explain how one generalized complex structure may be understood as a deformation of another.  By the smallness condition we mean that there is some $l\in\N$ such that if the deformation is small enough in its $C^l$-norm then the conclusion holds.  (See Section \ref{norm section} for details about the norms.)  The proof of the Main Lemma is in Section \ref{prove main lemma} (modulo technical results in Sections \ref{verifying SCI} and \ref{verifying hypotheses}).

In some sense, then, generalized complex structures are holomorphic Poisson structures twisted by (possibly) non-holomorphic Courant gluings.  A generalized complex manifold may not, in general, admit a global complex structure \cite{CavalcantiGualtieri2007} \cite{CavalcantiGualtieri}, emphasizing the local nature of our result.

\subsection{Holomorphic Poisson structures}\label{holomorphic Poisson}

A holomorphic Poisson structure on a complex manifold $M$ is given by a holomorphic bivector field $\beta \in \Gamma(\^ ^2 T_{1,0}M),\; \bar\del\beta=0,$ for which the Schouten bracket, $[\beta,\beta]$, vanishes.  $\beta$ determines a Poisson bracket on holomorphic functions, $\{f,g\} = \beta(df,dg)$.  The holomorphicity condition, $\bar\del\beta=0$, means that, if $\beta$ is written in local coordinates,
$$\beta = \sum_{i,j} \beta_{ij} \frac{d}{dz_i}\^\frac{d}{dz_j},$$
then the component functions, $\beta_{ij}$, are holomorphic.  For a review of holomorphic Poisson structures see \cite{LSX}.

The \emph{type-change locus} of a generalized complex structure induced by a holomorphic Poisson structure, that is, the locus where the Poisson rank changes, is determined by the vanishing of an algebraic function of the component functions above, thus,
\begin{cor}
The type-change locus of a generalized complex structure locally admits the structure of an analytic subvariety.
\end{cor}

After a review of the literature we believe the following to be an open question: is every holomorphic Poisson structure locally equivalent to one which is polynomial in some coordinates?  We are unaware of any counterexamples, and there are some partial results \cite{DufourWade} \cite{Lohrmann}.

\subsection{Outline of the proof of the Main Lemma}\label{outline of proof}

In Section \ref{deformation section} we describe the deformation complex for generalized complex structures, and how it interacts with \emph{Courant flows} coming from \emph{generalized vector fields}.  In Section \ref{infinitesimal case} we solve an infinitesimal version of the problem, by showing that, to first order, an infinitesimal generalized complex deformation of a holomorphic Poisson structure is equivalent to another holomorphic Poisson structure.  Then the full problem is solved by iterating an approximate version of the infinitesimal solution:

At each stage of the iteration, we have a generalized complex structure which is a deformation of a given complex structure.  We seek to cancel the part of this deformation which is \emph{not} a Poisson bivector.  We construct a generalized vector field whose Courant flow acting on the deformation should cancel this non-bivector part, to first order.  Then after each stage the unwanted part of the deformation should shrink quadratically.  We mention two issues with this algorithm:

Firstly, at each stage we ``lose derivatives,'' meaning that the $C^k$-convergence will depend on ever higher $C^{k+i}$-norms.  The solution is to apply Nash's smoothing operators at each stage to the generalized vector field, where the smoothing degree is carefully chosen to compensate for loss of derivatives while still achieving convergence.  A good general reference for this sort of technique (in the context of compact manifolds) is \cite{Hamilton}, and it is tempting to try to apply the Nash-Moser implicit function theorem directly. However, this is frustrated by the second issue:

Since we are working on a neighbourhood of a point $p$, the generalized vector field will not integrate to a Courant automorphism of the whole neighbourhood.  Thus, after each stage we may have to restrict our attention to a smaller neighbourhood of $p$.  If the radius restriction at each stage happens in a controlled way, then the limit will be defined on a ball of radius greater than 0.  The technique for proving Nash-Moser type convergence results on shrinking neighbourhoods comes from Conn \cite{Conn}.

We adopt a more recent formalization of this technique, by Miranda, Monnier and Zung \cite{MirandaMonnierZung} \cite{MonnierZung} (Section \ref{SCI section}).  In fact, for much of the work we use a general technical lemma of theirs (Theorem \ref{Miranda-Monnier-Zung}) with only a few changes.  Even so, we must prove estimates for the behaviour of Courant flows acting on deformations (see Sections \ref{verifying SCI} and \ref{verifying hypotheses}).

\section{The deformation complex}\label{deformation section}
In this section, if $V$ is a vector bundle, let $\Gamma(V)$ denote its smooth sections.  We will now describe the deformation complex for generalized complex structures.  Except where we remark otherwise, the results in this section (\ref{deformation section}) can be found in Section 5 of \cite{Gualtieri2011}.  We make use of the fact that a generalized complex structure is determined by its $+i$-eigenbundle.

Let $T$ be the tangent bundle of some manifold $M$.  Let $L \subset \C \tens (T \dsum T^*)$ be the $+i$-eigenbundle for an \emph{initial} generalized complex structure.  We will often take this initial structure to be the complex structure on $\C^n$, in which case
$$L = T_{0,1} \dsum T^*_{1,0}.$$
Another example arises from a holomorphic Poisson structure on $\C^n$.  If $\beta : T^*_{1,0} \to T_{1,0}$ is a holomorphic Poisson bivector, then we define the corresponding generalized complex structure, with $+i$-eigenbundle
\begin{equation}\label{L for holomorphic bivector}
L = T_{0,1} \dsum \textnormal{graph}(\beta).
\end{equation}

In any case, the $+i$-eigenbundle of a generalized complex structure is a maximal isotropic subbundle.  Using the pairing, we choose an embedding of $L^*$ in $T_\C \dsum T_\C^*$, which will be transverse to $L$ and isotropic with respect to the standard symmetric pairing.  One choice is $L^* \iso \bar{L}$, though we may take others.  Any maximal isotropic $L_\epsilon$ close to $L$ may thus be realized as
\begin{equation}\label{define deformation}
L_\epsilon = (1 + \epsilon)L,
\end{equation}
where $\epsilon : L \to L^* \subset T_\C \dsum T^*_\C$.  As a consequence of the maximal isotropic condition on $L_\epsilon$, $\epsilon$ will be antisymmetric, and we can say that $\epsilon \in \Gamma(\^ ^2 L^*)$.  In fact, for any $\epsilon \in \Gamma(\^ ^2 L^*)$, $L_\epsilon$ is the $+i$--eigenbundle of an almost generalized complex structure.  Of course, for $L_\epsilon$ to be integrable, $\epsilon$ must satisfy a differential condition---the Maurer-Cartan equation (see Section \ref{Maurer-Cartan section}).

\begin{rem}\label{L* convention}
For initial complex structures, we will use the convention $L^* \iso \bar{L}$, so that $L^* = T_{1,0} \dsum T^*_{0,1}$.  Since the only requirement on the embedding of $L^*$ is that it be transverse to $L$ and isotropic (and thus give a representation of $L^*$ by the pairing), we will take this same choice of $L^*$ whenever possible; that is, we henceforth fix the notation
\begin{equation}
L^* = T_{1,0} \dsum T^*_{0,1},
\end{equation}
regardless of which eigenbundle $L$ we are dealing with.
\end{rem}

\begin{rem}
If the initial structure is complex and $\epsilon = \beta \in \Gamma(\^ ^2 T_{1,0})$ is a holomorphic Poisson bivector, then the deformed eigenbundle $L_\epsilon$ agrees with \eqref{L for holomorphic bivector}.
\end{rem}

\subsection{Generalized Schouten bracket and Lie bialgebroid structure}
While $T\dsum T^*$ is not a Lie algebroid for the Courant bracket (which isn't antisymmetric), the restriction of the bracket to $L$ \emph{does} give a Lie algebroid structure.  From this, there is a naturally-defined differential
$$d_L : \Gamma(\^ ^k L^*) \to \Gamma(\^ ^{k+1} L^*)$$
as well as an extension of the bracket (in the manner of Schouten) to higher wedge powers of $L$.  But $L^*$ is also a Lie algebroid, and the same structures apply.  Together they form a Lie bialgebroid (actually, a differential Gerstenhaber algebra if we consider the wedge product), meaning that $d_L$ is a derivation for the bracket on $\^ ^\bullet L^*$:
\begin{equation}\label{bialgebroid}
d_L[\alpha,\beta] = [d_L\alpha,\beta] + (-1)^{|\alpha|-1}\, [\alpha,d_L\beta]
\end{equation}
For more details on Lie bialgebroids see \cite{LiuWeinsteinXu}, and for their relation to generalized complex structures see \cite{GrandiniPoonRolle} and \cite{Gualtieri2011}.

\begin{example}
If $L$ corresponds to a complex structure, then $d_L = \bar\del$.  We can find the differential for other generalized complex structures by using the following fact, quoted from \cite{GrandiniPoonRolle}:
\end{example}

\begin{prop}\label{deformation of d_L}
Let $L_\epsilon$ be an integrable deformation of a generalized complex structure $L$ by $\epsilon \in \^ ^2 L^*$.  As per Remark \ref{L* convention}, we identify $L_\epsilon^* = L^* = \bar{L}$, and thus identify their respective differential complexes as sets.  Then for $\sigma \in \Gamma(\^ ^k L^*)$,
$$d_{L_\epsilon} \sigma = d_L \sigma + [\epsilon, \sigma].$$
\end{prop}

\begin{example}\label{d_L for Poisson}
Thus, the differential on $\Gamma(\^ ^k L^*)$ coming from a holomorphic Poisson structure $\beta \in \Gamma(\^ ^2 T_{1,0})$ is just
$$d_{L_\beta} = \bar\del + d_\beta,$$
where $d_\beta$ is the usual Poisson differential $[\beta,\cdot]$.
\end{example}

\subsection{Integrability and the Maurer-Cartan equation}\label{Maurer-Cartan section}

For a deformed structure $L_\epsilon$ to be integrable, $\epsilon$ must satisfy the Maurer-Cartan equation,
\begin{equation}\label{MC equation}
d_L \epsilon + \frac{1}{2}[\epsilon,\epsilon] = 0
\end{equation}

\begin{notn}
Suppose $L$ is the $+i$-eigenbundle for the generalized complex structure on $\C^n$ coming from the complex structure, and suppose that $L^* = T_{1,0} \dsum T^*_{0,1}$ as in Remark \ref{L* convention}.  We may write
$$\^ ^2 L^* = (\^ ^2 T_{1,0}) \dsum (T_{1,0} \tens T^*_{0,1}) \dsum (\^ ^2 T^*_{0,1}).$$
If $\epsilon \in \Gamma(\^ ^2 L^*)$ is a deformation, we will write $\epsilon$ correspondingly as $\epsilon_1 + \epsilon_2 + \epsilon_3$, where $\epsilon_1$ is a bivector field, $\epsilon_2 \in \Gamma(T_{1,0} \tens T^*_{0,1})$, and $\epsilon_3$ is a 2-form.
\end{notn}

Then the Maurer-Cartan condition (\ref{MC equation}) on $\epsilon$ splits into four equations:
\begin{align}
\^ ^3 T_{1,0}  & \quad:\quad  [\epsilon_1,\epsilon_1] = 0  \label{MC1}\\
\^ ^2 T_{1,0} \tens T^*_{0,1}  & \quad:\quad  [\epsilon_1,\epsilon_2] + \bar\del\epsilon_1 = 0  \label{MC2}\\
T_{1,0} \tens \^ ^2 T^*_{0,1}  & \quad:\quad  \frac{1}{2} [\epsilon_2,\epsilon_2] + [\epsilon_1,\epsilon_3] + \bar\del\epsilon_2 = 0  \label{MC3}\\
\^ ^3 T^*_{01,}  & \quad:\quad  \bar\del\epsilon_3 = 0  \label{MC4} 
\end{align}

\begin{rem}\label{bivector implies holomorphic}
By \eqref{MC1}, $\epsilon_1$ always satisfies the Poisson condition.  If $\epsilon_2=0$ then, by \eqref{MC2}, $\epsilon_1$ is also holomorphic.  Therefore, to say that an integrable deformation $\epsilon$ is holomorphic Poisson is the same as to say that $\epsilon_2$ and $\epsilon_3$ vanish, that is, that $\epsilon$ is just a bivector.
\end{rem}

\subsection{Courant automorphisms}\label{Courant automorphisms}

\begin{defn}\label{Courant automorphism}
A \emph{Courant automorphism} $\Phi : T\dsum T^* \to T\dsum T^*$, also called a \emph{generalized diffeomorphism}, is an isomorphism of $T\dsum T^*$ (covering some diffeomorphism) which respects the Courant bracket, the symmetric pairing, and the projection to the tangent bundle.

A $B$-transform is a particular kind of Courant automorphism: if $B:T\to T^*$ is a closed 2-form and $X+\xi\in T\dsum T^*$, then
$e^B(X+\xi) = (1+B)(X+\xi) = X+\iota_X B+\xi.$

Another kind of Courant automorphism is a diffeomorphism acting by pushforward (which means inverse pullback on the $T^*$ component).  We will typically identify a Courant automorphism $\Phi$ with a pair $(B,\phi)$, where $B$ is a closed 2-form and $\phi$ is a diffeomorphism---then $\Phi$ acts first through a $B$-transform and then through pushforward by $\phi_*$.  Such pairs exhaust the Courant automorphisms \cite{Gualtieri2011}.
\end{defn}

\begin{rem}\label{Courant composition}
Let $\Phi=(B,\phi)$ and $\Psi=(B',\psi)$ be Courant automorphisms.  Then
$$\Phi \comp \Psi = (\psi^*(B) + B',\phi\comp\psi)  \quad\textnormal{and}\quad
\Phi^{-1} = (-\phi_*(B),\phi^{-1})$$
\end{rem}

\begin{defn}\label{Courant action on deformation}
If $L_\epsilon$ is a deformation of generalized complex structure $L$, and $\Phi$ is a Courant automorphism of sufficiently small 1-jet, then $\Phi(L_\epsilon)$ is itself a deformation of $L$.  Let $\Phi\cdot\epsilon \in \Gamma(\^ ^2 L^*)$ be such that $L_{\Phi\cdot\epsilon} = \Phi(L_\epsilon)$, that is,
\begin{equation}
\Phi\left((1+\epsilon)L\right) = (1+\Phi\cdot\epsilon)L.
\end{equation}
\end{defn}

\begin{rem}\label{not pushforward}
For a more concrete formula for $\Phi\cdot\epsilon$, see Proposition \ref{action formula}.  In general $\Phi\cdot\epsilon$ should not be understood as a pushforward of the tensor $\epsilon$.  (In fact, $\Phi\cdot0$ may be nonzero!)  However, if $\Phi(L)=L$ then indeed $\Phi\cdot\epsilon = \Phi_*(\epsilon)$ suitably interpreted.
\end{rem}

\begin{defn}\label{Courant flow}
A section $v \in \Gamma(T\dsum T^*)$ is called a \emph{generalized vector field}.  We say that $v$ generates the 1-parameter family $\Phi_{tv}$ of generalized diffeomorphisms if for any section $\sigma\in T\dsum T^*$,
\begin{equation}\label{tensor Courant derivative}
\left.\frac{d}{dt}\right|_{\tau=t}\left(\Phi_{\tau v}\right)_*\sigma = [v,\left(\Phi_{tv}\right)_*\sigma].
\end{equation}
\end{defn}

The generalized diffeomorphism thus defined is related to a classically generated diffeomorphism as follows:

Let $v = X + \xi$, where $X$ is a vector field and $\xi$ a 1-form.  If $X$ is small enough, or the manifold is compact, then it integrates to the diffeomorphism $\phi_X$ which is its time-$1$ flow.  Let
$$B_v = \int_0^1 \phi_{tX}^* (d\xi) dt.$$
Then $\Phi_v = (B_v, \phi_X)$ is the time-1 Courant flow of $v$.

\begin{rem}
If $X$ does not integrate up to time 1 from every point, then $\phi_X$, and thus $\Phi_v$, is instead defined on a subset of the manifold.  In this case, $\Phi_v$ is a \emph{local} Courant automorphism (or \emph{local} generalized diffeomorphism).
\end{rem}

\begin{rem}
While \eqref{tensor Courant derivative} gives the derivative of a Courant flow acting by pushforward on a tensor, it does \emph{not} hold for derivatives of Courant flows acting by the deformation action of Definition \ref{Courant action on deformation}, as we see from Remark \ref{not pushforward}.
\end{rem}

The following is a corollary to \cite[Prop. 5.4]{Gualtieri2011}: 
\begin{lem}
If $0 \in \Gamma(\^ ^2 L^*)$ is the trivial deformation of $L$ and $v \in \Gamma(T\dsum T^*)$, then
$$\left.\frac{d}{dt}\Phi_{tv}\cdot 0\right|_{t=0} = d_L v^{0,1},$$
where $v^{0,1}$ is the projection of $v$ to $L^*$.
\end{lem}

Then combining this fact with Proposition \ref{deformation of d_L} we see that
\begin{prop}\label{infinitesimal flow}
If $\epsilon \in \Gamma(\^ ^2 L^*)$ is an integrable deformation of $L$, and $v \in \Gamma(T\dsum T^*)$, then
$$\left.\frac{d}{dt}\Phi_{tv}\cdot \epsilon\right|_{t=0} = d_L v^{0,1} + [\epsilon,v],$$
where $v^{0,1}$ is the projection of $v$ to $L^*$.
\end{prop}

\begin{rem}
Definition \ref{Courant flow} makes sense if $v$ is a real section of $T\dsum T^*$.  On the other hand, if $v \in \Gamma(T_\C \dsum T^*_\C)$ is complex, we may interpret $\Phi_v$ in the presence of an underlying generalized complex structure as follows.  $v$ decomposes into $v^{1,0}\in L$ plus $v^{0,1}\in\bar{L}$.  We see in Proposition \ref{infinitesimal flow} that the component in $L$ has no effect on the flow of deformations, therefore we define
$$\Phi_v := \Phi_{v^{0,1} + \overline{v^{0,1}}},$$
where $v^{0,1} + \overline{v^{0,1}}$ is now real.  Proposition \ref{infinitesimal flow} still holds.
\end{rem}

\section{The infinitesimal case}\label{infinitesimal case}

We would like to make precise and then prove the following rough statement: if $\epsilon$ is an infinitesimal deformation of a holomorphic Poisson structure on the closed unit ball $B_1 \subset \C^n$, then we may construct an infinitesimal flow by a generalized vector field $V$ which ``corrects'' the deformation so that it remains within the class of holomorphic Poisson structures.  This is a cohomological claim about the complex $(\^ ^\bullet L^*, d_L)$.  When we consider the full problem of finite deformations, this will still be approximately true in some sense, which will help us prove the Main Lemma.

Suppose that $\epsilon_t$ is a one-parameter family of deformations of $L$.  Differentiating equation \eqref{MC equation} by $t$, we get that
$$d_L \dot\epsilon_t + [\epsilon_t,\dot\epsilon_t] = 0$$
If $\epsilon_0 = 0$, then we have the condition $d_L \dot\epsilon_0 = 0$.  That is, an infinitesimal deformation of $L$ must be $d_L$-closed.

Thus we make precise the statement in the opening paragraph of this section:
\begin{prop}\label{infinitesimal correction}
Suppose that $L$ is the $+i$-eigenbundle corresponding to a holomorphic Poisson structure $\beta$ on $B_1\subset\C^n$, and suppose that $\epsilon \in \Gamma(\^ ^2 L^*)$ satisfies $d_L \epsilon = 0$.  Then there exists $V(\beta,\epsilon) \in \Gamma(L^*)$ such that $\epsilon + d_L V(\beta,\epsilon)$ has only a bivector component.
\end{prop}

\begin{proof}
As in section \ref{Maurer-Cartan section}, we write $\epsilon = \epsilon_1 + \epsilon_2 + \epsilon_3$ where the terms are a bivector field, a mixed co- and contravariant term, and a 2-form respectively.  The closedness condition, $(\bar\del + d_\beta)\epsilon = 0$ (as per Example \ref{d_L for Poisson}), may be decomposed according to the co- and contravariant degree.

For example, we have $\bar\del\epsilon_3 = 0$.  Since $\bar\del$-cohomology is trivial on the ball $B_1$, there exists a $(0,1)$-form $P\epsilon_3$ such that $\bar\del P\epsilon_3 = \epsilon_3$.  $-P\epsilon_3$ will be one piece of $V(\beta,\epsilon)$.

Another component of the closedness condition is $\bar\del\epsilon_2 + d_\beta\epsilon_2 = 0$.  Then
\begin{eqnarray*}
\bar\del(\d_\beta P\epsilon_3 - \epsilon_2) &=& \bar\del\d_\beta P\epsilon_3 + \d_\beta\epsilon_3 \\
&=& \bar\del\d_\beta P\epsilon_3 + \d_\beta\bar\del P\epsilon_3
\end{eqnarray*}
But $\bar\del$ and $\d_\beta$ anticommute, so this is $0$, i.e., $\d_\beta P\epsilon_3 - \epsilon_2$ is $\bar\del$-closed.  Therefore it is $\bar\del$-exact, and there exists some $(1,0)$-vector field $P(\d_\beta P\epsilon_3 - \epsilon_2)$ such that $\bar\del P(\d_\beta P\epsilon_3 - \epsilon_2) = \d_\beta P\epsilon_3 - \epsilon_2$.  Let
\begin{equation}\label{construct V}
V(\beta,\epsilon) = P(\d_\beta P\epsilon_3 - \epsilon_2) - P\epsilon_3
\end{equation}
Then
$$(\bar\del + \d_\beta)V(\beta,\epsilon) = \d_\beta P(\d_\beta P\epsilon_3 - \epsilon_2) - \epsilon_2 - \epsilon_3,$$
where $\d_\beta P(\d_\beta P\epsilon_3 - \epsilon_2)$ is a section of $\^ ^2 T_{1,0}$.  Therefore
$$\epsilon + d_L V(\beta,\epsilon) \;\in\; \Gamma(\^ ^2 T_{1,0})$$
\end{proof}

\subsection{The $\bar\del$ chain homotopy operator}

The non-constructive step in the proof of Proposition \ref{infinitesimal correction} is the operation $P$ which gives $\bar\del$-primitives for sections of $(T_{1,0} \tens T^*_{0,1}) \dsum \^ ^2 T^*_{0,1}$.  Fortunately, in \cite{NijenhuisWoolf} Nijenhuis and Woolf give a construction of such an operator and provide norm estimates for it.

\begin{prop}\label{existence of P}
For a closed ball $B_r \subset \C^n$, there exists a linear operator $P$ such that for all $i,j\geq0$,
$$P : \Gamma\left(\left(\^ ^i T_{1,0}\right) \tens \left(\^ ^{j+1} T^*_{0,1}\right)\right)
\to \Gamma\left(\left(\^ ^i T_{1,0}\right) \tens \left(\^ ^j T^*_{0,1}\right)\right)$$
such that
\begin{equation}\label{P error}
\bar\del P + P \bar\del = \Id.
\end{equation}
and such that the $C^k$-norms of $P$ satisfies the estimate, for all integers $k\geq0$,
$$\|P\epsilon\|_k \,\leq\, C\,\|\epsilon\|_k.$$
(See Section \ref{norm section} for details on $C^k$ norms.)
\end{prop}

We note that $P$ is defined on \emph{all} smooth sections, not just $\bar\del$-closed sections.  But if $\bar\del\epsilon=0$, $\bar\del P\epsilon = \epsilon$ as desired.

\begin{proof}
For a $(0,j)$ form, $P$ is just the operator $T$ defined in \cite{NijenhuisWoolf}.  We don't give the full construction here (or the proofs of its properties), but we remark that it is built inductively from the case of a 1-form $f\,d\bar{z}$ on $\C$, for which
$$(T\, f\,d\bar{z}) (x) = \frac{-1}{2\pi i} \int_{B_r} \frac{f(\zeta)}{\zeta-x}\, d\zeta\^d\bar\zeta.$$

On the other hand, if $\epsilon \in \Gamma\left(\left(\^ ^i T_{1,0}\right) \tens \left(\^ ^{j+1} T^*_{0,1}\right)\right)$ for $i>0$, we may write
$$\epsilon = \sum_I \frac{d}{dz_I} \tens \epsilon_I,$$
where $I$ ranges over multi-indices, $\frac{d}{dz_I}$ is the corresponding basis multivector, and $\epsilon_I \in \Gamma\left(\^ ^{j+1} T^*_{0,1}\right)$.  Then $T$ is applied to each of the $\epsilon_I$ individually.

The estimate is also from \cite{NijenhuisWoolf}, and by construction of $P$ clearly also applies to mixed co- and contravariant tensors.
\end{proof}

$P$ as defined depends continuously on the radius, $r$, of the polydisc---that is, it doesn't commute with restriction to a smaller radius.  We say no more about this quirk except to note that it is compatible with Theorem \ref{Miranda-Monnier-Zung}.

\subsection{Approximating the finite case with the infinitesimal solution}

We sketch how Proposition \ref{infinitesimal correction} roughly translates to the finite case (for details, see Lemma \ref{almost infinitesimal}):

We will be considering deformations $\epsilon = \epsilon_1 + \epsilon_2 + \epsilon_3$ of the complex structure on $B_r \subset \C^n$, which are close to being holomorphic Poisson; thus, $\epsilon_2$ and $\epsilon_3$ will be small and $\epsilon_1$ will almost be a holomorphic Poisson bivector.  We then pretend that $\epsilon_2 + \epsilon_3$ is a small deformation of the almost holomorphic Poisson structure $\beta = \epsilon_1$, and the argument for Proposition \ref{infinitesimal correction} goes through approximately.  Thus,

\begin{defn}\label{define V}
If $\epsilon \in \Gamma(\^ ^2 L^*)$, with the decomposition $\epsilon = \epsilon_1 + \epsilon_2 + \epsilon_3$ as in Section \ref{Maurer-Cartan section}.  Then let
$$V(\epsilon) = V(\epsilon_1,\epsilon_2 + \epsilon_3) = P([\epsilon_1, P\epsilon_3] - \epsilon_2 - \epsilon_3).$$
\end{defn}

In the above construction, we apply $P$ to sections which are not quite $\bar\del$-closed, so it will not quite yield $\bar\del$-primitives; this error is controlled by equation \eqref{P error}.  Furthermore, we can no longer say that $[\epsilon_1,\cdot]$ and $\bar\del$ anticommute; this error will be controlled by the bialgebroid property \eqref{bialgebroid}, with $d_L = \bar\del$, so that if $\theta \in \Gamma(\^ ^\bullet L^*)$ then
\begin{equation}\label{anticommute error}
\bar\del[\epsilon_1, \theta] = -[\epsilon_1, \bar\del \theta] + [\bar\del\epsilon_1, \theta].
\end{equation}

\section{SCI-spaces and the abstract normal form theorem}\label{SCI section}

As discussed in Section \ref{outline of proof}, the local equivalence of a generalized complex structure near a point $p$ to a holomorphic Poisson structure will be achieved by iteratively applying a particular sequence of \emph{local} generalized diffeomorphisms to the initial structure, and then arguing that in the limit this sequence takes the initial structure to a holomorphic Poisson structure.  A difficulty with this approach is that at each stage we may have to restrict to a smaller neighbourhood of $p$.  Thus the iteration is not in a fixed space of deformations, but rather through a collection of spaces, one for each neighbourhood of $p$.

The technique for handling this difficulty comes from Conn \cite{Conn}, though we have adopted some of the formalism of Miranda, Monnier and Zung \cite{MonnierZung} \cite{MirandaMonnierZung}, with \cite[Appendix 1]{MirandaMonnierZung} our main reference.  We adapt the definition of SCI-spaces---or ``scaled $C^\infty$'' spaces---SCI-groups and SCI-actions, with some changes which we discuss.  In particular, for simplicity we consider only the ``$C^\infty$'' part of the space (whereas in \cite{MirandaMonnierZung} $C^k$ sections are considered).  Hence, an SCI-space is a radius-parametrized collection of tame Frechet spaces.  To be precise:

\begin{defn}
An \emph{SCI-space} $\sr H$ consists of a collection of vector spaces $\sr{H}_r$ with norms $\|\cdot\|_{k,r}$---where $k\geq0$ (the \emph{smoothness}) is in $\Z$ and $0<r\leq1$ (the \emph{radius}) is in $\R$---and for every $0<r'<r\leq1$ a linear \emph{restriction map}, $\pi_{r,r'} : \sr H_r \to \sr H_{r'}$.  Furthermore, the following properties should hold:
\begin{itemize}
\item If $r>r'>r''$ then $\pi_{r,r''}=\pi_{r,r'}\comp\pi_{r',r''}$.
\end{itemize}
If $f \in \sr{H}_r$ then, to abuse notation, we denote $\pi_{r,r'}(f) \in \sr H_{r'}$ also by $f$.
\begin{itemize}
\item If $f$ in $\sr{H}$, $r'\leq r$ and $k'\leq k$, then
$$\|f\|_{k',r'} \leq \|f\|_{k,r},$$
\end{itemize}
where if neither $f$ nor a restriction of $f$ is in $\sr H_r$ then we interpret $\|f\|_r = \infty$.  We take as the topology for each $\sr{H}_r$ the one generated by open sets in every norm.  We require that
\begin{itemize}
\item If a sequence in $\sr{H}_r$ is Cauchy for each norm $\|\cdot\|_k$ then it converges in $\sr{H}_r$.
\item At each radius $r$ there are \emph{smoothing operators}, that is, for each real $t>1$ there is a linear map
$$S_r(t) : \sr H_r \to \sr H_r$$
such that for any $p>q$ in $\Z^+$ and any $f$ in $\sr H_r$,
\begin{eqnarray}
\|S_r(t)f\|_{p,r} &\leq& C_{r,p,q} t^{p-q}\|f\|_{q,r} \quad\textnormal{and} \\
\|f-S_r(t)f\|_{q,r} &\leq& C_{r,p,q} t^{q-p}\|f\|_{p,r},
\end{eqnarray}
where $C_{r,p,q}$ is a positive constant depending continuously on $r$.
\end{itemize}

An \emph{SCI-subspace} $\sr S \subset \sr H$ consists of a collection of subspaces $\sr S_r \subset \sr H_r$ which themselves form an SCI-space under the induced norms, restriction maps and smoothing operators.  An \emph{SCI-subset} of $\sr H$ consists of a collection of subsets of the $\sr H_r$ which is invariant under the restriction maps. 
\end{defn}

\begin{example}\label{sections are SCI}
Let $V$ be a finite-dimensional normed vector space.  For each $0<r\leq1$, let $B_r \subset \R^n$ or $\C^n$ be the closed unit ball of radius $r$ centred at the origin (under the $\sup$-norm, this is actually a rectangle or polydisc), and let $\sr H_r$ be the $C^\infty$-sections of the trivial bundle $B_r \times V$, with $\|\cdot\|_{k,r}$ the $C^k$-$\sup$ norm.  Then the $\sr H_r$ and $\|\cdot\|_{k,r}$ form an SCI-space.
\end{example}

\begin{rem}
At a fixed radius $r$, $\sr H_r$ is a tame Frechet space.  There are constructions of smoothing operators in many particular instances (see, eg., \cite{Hamilton}).  The essential point is that $S_r(t)f$ is a smoothing of $f$, in the sense that its higher norms are controlled by lower norms of $f$, and as $t$ gets larger, $S_r(t)f$ is a better approximation to $f$, but is less smooth.  As a consequence of the existence of smoothing operators, we have the \emph{interpolation inequality} (also see \cite{Hamilton}):
\end{rem}
\begin{prop}\label{interpolation inequality}
Let $\sr{H}$ be an SCI-space, let $0\leq l \leq m \leq n$ be integers, and let $r>0$.  Then there is a constant $C_{l,m,n,r}>0$ such that for any $f \in \sr{H}_r$,
$$\|f\|_m^{n-l} \leq C_{l,m,n,r}\, \|f\|_n^{m-l}\, \|f\|_l^{n-m}.$$
\end{prop}

\subsection{Notational conventions}

We will need to express norm estimates for members of SCI-spaces, that is, we will write SCI-norms into inequalities.  We develop some shorthand for this, which is similar to (but extends) the notation in \cite{MirandaMonnierZung}.

\subsubsection*{Spaces of sections}
If $E = B_1 \times V$ is a vector bundle over $B_1 \subset \R^n$ or $\C^n$, then by $\Gamma(E)$ we will always mean the SCI-space of local sections of $E$ near $0 \in \C^n$, as in Example \ref{sections are SCI}.
\subsubsection*{Radius parameters}
We will often omit the radius parameter when writing SCI-norms (but we will always include the degree).  The right way to interpret such notation is as follows: when the norms appear in an equation, the claim is that this equation holds for any common choice of radius where all terms are well-defined.  When the norms appear in an inequality, the claim is that the inequality holds for any common choice of radius $r$ for the lesser side of the inequality, with any common choice of radius $r' \geq r$ for the greater side of the inequality (for which all terms are well-defined).

For example, for $f\in\sr{H}$ and $g\in\sr{K}$,
$$\|f\|_k \leq \|g\|_{k+1}$$
means
$$\forall\; 0<r\leq r'\leq1,\;\textnormal{if}\,f\in\sr{H}_r\,\textnormal{and}\, g\in\sr{H}_{r'}\,\textnormal{then}\, \|f\|_{k,r}\leq\|g\|_{k+1,r'}$$
\begin{rem}
Since the norms are nondecreasing in radius, this convention is plausible.
\end{rem}
\subsubsection*{Constants}
Whenever it appears in an inequality, $C$ (or $C'$) will stand for a positive real constant, which may be different in each usage, and which may depend on the degree, $k$, of the terms, and continuously on the radius.
\subsubsection*{Polynomials}
Whenever the notation $$Poly(\|f_1\|_{k_1},\|f_2\|_{k_2},\ldots)$$ occurs, it denotes some polynomial in $\|f_1\|_{k_1}$, $\|f_2\|_{k_2}$, etc., with positive coefficients, which may depend on the degrees $k_i$ and continuously on the radius, and which may be different in each usage.  These polynomials will always occur as bounds on the greater side of an inequality, and it will not be important to know their exact form.
\subsubsection*{Leibniz polynomials}
Because they occur so often, we give special notation for a certain type of polynomial.  Whenever the notation
$$\LP(\|f_1\|_{k_1},\, \ldots,\, \|f_d\|_{k_d})$$
occurs it denotes a polynomial (with positive coefficients, which depends on the $k_i$ and continuously on the radius, and which may be different in each usage) such that each monomial term is as follows:

Each $\|f_i\|_\bullet$ occurs with degree at least $1$ (in some norm degree), and at most one of the $\|f_i\|$ has ``large'' norm degree $k_i$, while the other factors in the monomial have ``small'' norm degree $\fl{k_j/2}+1$, where $\fl{\,\cdot\,}$ denotes the integer part.

Equivalently, using the monotonicity in $k$ of $\|\cdot\|_k$, we can define $\LP$ in $Poly$ notation, as follows:
\begin{eqnarray}
& & A \leq \LP(\|f_1\|_{k_1},\, \ldots,\, \|f_d\|_{k_d}) \qquad\textnormal{if and only if}\notag \\
& & \qquad A \leq \sum_{i=1}^d \|f_i\|_{k_i}\,\times\, \|f_1\|_{\fl{k_1/2}+1}\, \ldots\, \widehat{\|f_i\|}_{\fl{k_i/2}+1}\, \ldots\cdot\, \|f_d\|_{\fl{k_d/1}+1}\, \notag \\
& & \qquad\qquad \times\, Poly(\|f_1\|_{\fl{k_1/2}+1},\, \ldots,\, \|f_d\|_{\fl{k_d/2}+1}),
\end{eqnarray}
where $\widehat{\|f_i\|}$ indicates this term is omitted from the product.  For example, we might say
$$\|f\|_k\, \|g\|_{\fl{k/2}+2}\, +\, \|f\|_{\fl{k/2}+1}\, \|g\|_{k+1}\, \|g\|_{\fl{k/2}+2} \;\leq\; \LP(\|f\|_k,\|g\|_{k+1}).$$

\begin{rem}
A typical example of how such terms arise is: to find the $C^k$-norm of a product of fields, we must differentiate $k$ times, applying the Leibniz rule iteratively. We get a polynomial in derivatives of the fields, and each monomial has at most one factor with more than $\fl{k/2}+1$ derivatives.  See Lemma \ref{bilinear estimate} for example, or \cite[II.2.2.3]{Hamilton} for a sharper estimate.
\end{rem}

We extend the definition to allow the entries in a Leibniz polyonmial to be polynomials themselves, eg.,
$$\|f\|_k\,\|h\|_{\fl{k/2}+1} \,+\, \|f\|_k\, \|g\|_{\fl{k/2}+1} + \|f\|_{\fl{k/2}+1}\, \|g\|_k \;\leq\; \LP\left(\|f\|_k,\, \|h\|_k+\|g\|_k\right).$$
In this case, we have used $\|h\|_k+\|g\|_k$ to indicate that not every monomial need have a factor of both $\|g\|$ and $\|h\|$.
\begin{lem}
Leibniz polynomials are closed under composition, e.g.,
$$\LP(\LP(\|f\|_a,\, \|g\|_b),\, \|h\|_c) \,\leq\, \LP(\|f\|_a,\, \|g\|_b,\, \|h\|_c)$$
\end{lem}

\begin{rem}
The approach in \cite{Hamilton} is to study \emph{tame} maps between tame Frechet spaces.  To say that a map is bounded by a Leibniz polynomial in its arguments is similar to the tameness condition.  However, rather than adapt this framework to SCI-spaces, we do as in \cite{MonnierZung} and \cite{MirandaMonnierZung}, working directly with bounding polynomials.
\end{rem}

\begin{rem}\label{radius dependence}
As noted in \cite{MonnierZung} and elsewhere, whether the coefficients of the polynomials vary continuously with the radius, or do not, makes no difference to the algorithm of Theorem \ref{Miranda-Monnier-Zung}, which ensures that all radii are between $R/2$ and $R$, over which we can find a radius-independent bound.
\end{rem}

\subsection*{SCI-groups}
We will give a definition of a group-like structure modelled on SCI-spaces, which is used in \cite{MirandaMonnierZung} to model local diffeomorphisms about a fixed point (and in our case to model local generalized diffeomorphisms); but first we feel we should give a conceptual picture to make the definition clearer:

Elements of an SCI-group will be identified with elements of an SCI-space, and we use the norm structure of the latter to express continuity properties of the former.  However, we do not assign any special meaning to the linear structure of the SCI-space---in particular, the SCI model-space for an SCI-group should not be viewed as its Lie algebra in any sense.  Furthermore, group elements will be defined at given radii, and their composition may be defined at yet a smaller radius---the amount by which the radius shrinks should be controlled by $\|\cdot\|_1$ of the elements (usually interpreted as a bound on their first derivative) and a fixed parameter for the group.

\begin{defn}\label{SCI group}
An \emph{SCI-group $\sr{G}$ modelled on an SCI-space $\sr{W}$} consists of elements which are formal sums
$$\phi = \Id + \chi,$$
where $\chi \in \sr{W}$, together with a \emph{scaled product} defined for some pairs in $\sr{G}$, i.e.:

There is a constant $c>1$ such that if $\phi$ and $\psi$ are in $G_r$ for some $r$ and
$$\|\phi-\Id\|_{1,r} \leq 1/c,$$
then,

(a) the \emph{product} $\psi \cdot \phi \in \sr G_{r'}$ is defined, where $r'=r(1-c\|\phi-\Id\|_{1,r})$; furthermore, the product operation commutes with restriction, and is associative modulo necessary restrictions, and

(b) there exists a \emph{scaled inverse} $\phi^{-1} \in \sr{G}_{r'}$ such that $\phi\cdot\phi^{-1} = \phi^{-1}\cdot\phi = \Id$ at radius $r''=r'(1-c\|\phi-\Id\|_{1,r})$.

Furthermore, for $k\geq1$ the following continuity conditions should hold:
\begin{eqnarray}
\|\psi^{-1}-\phi^{-1}\|_k &\leq& \LP(\|\psi-\phi\|_k,\,1+\|\phi-\Id\|_k) \label{inverse estimate}, \\[3pt]
\|\phi\cdot\psi - \phi\|_k &\leq& \LP(\|\psi-\Id\|_k,\, 1+\|\phi-\Id\|_{k+1}) \label{composition estimate 1} \\[3pt]
\textnormal{and} \qquad
\|\phi\cdot\psi - \Id\|_k &\leq& \LP(\|\psi-\Id\|_k + \|\phi-\Id\|_k). \label{composition estimate 2}
\end{eqnarray}
(As per the notational convention, these inequalities are taken at precisely those radii for which they make sense.)
\end{defn}

\begin{example}\label{local diffeo are SCI}
As in Example \ref{sections are SCI}, for each $0<r\leq1$ let $B_r \subset \R^n$ be the closed unit ball of radius $r$ centred at the origin, and let $\sr{W}_r$ be the space of $C^\infty$-maps from $B_r$ into $\R^n$ fixing the origin.  If $\chi$ is such a map, then by $\phi = \Id + \chi$ we mean the sum of $\chi$ with the identity map; then $\Id + \sr{W}_r$ forms an SCI-group under composition for some constant $c>1$.  These are the local diffeomorphisms. (See Lemma \ref{functions are SCI} and \cite{Conn} for details.)
\end{example}

\begin{rem}
Our definition of SCI-group is a bit different than that appearing in \cite{MirandaMonnierZung}.  Our continuity conditions look different, though, ignoring terms of norm degree $\fl{k/2}+1$, our conditions imply those in \cite{MirandaMonnierZung}.  (See Remark \ref{justify changes}.)
\end{rem}

\begin{defn}\label{SCI action}
A \emph{left (resp.\ right) SCI-action} of an SCI-group $\sr G$ on an SCI-space $\sr H$ consists of on operation $\phi\cdot f \in \sr H_{r'}$ for $\phi \in \sr G_r$ and $f \in \sr H_r$, which is defined whenever $r' \leq (1-c\|\phi-\Id\|_{1,r})r$ for some constant $c>1$, such that the following hold: the operation should commute with radius restriction, it should satisfy the usual left (resp. right) action law modulo radius restriction, and there should be some $s$ (called the \emph{derivative loss}) such that, for large enough $k$, for $\phi,\psi \in \sr{G}_r$ and $f,g \in \sr{H}_r$, the following continuity conditions hold:
\begin{eqnarray}
\|\phi\cdot f - \phi\cdot g\|_k &\leq& \LP(\|f-g\|_k,\, 1+\|\phi-\Id\|_{k+s}) \quad\textnormal{and} \label{action estimate 1} \\[3pt]
\|\psi\cdot f - \phi\cdot f\|_k &\leq& \LP\left(1+\|f\|_{k+s},\, \|\psi-\phi\|_{k+s},\, 1+\|\phi-\Id\|_{k+s}\right), \label{action estimate 2} 
\end{eqnarray}
(if these terms are well-defined).
\end{defn}

\begin{rem}
\eqref{action estimate 2} will ensure that if a sequence $\phi_1,\phi_2,\ldots$ converges, then so does $\phi_1\cdot f, \phi_2\cdot f,\ldots$.  Combining \eqref{action estimate 1} with \eqref{action estimate 2} for $f=0$ and $\psi=\Id$, we get 
\begin{equation}\label{nonlinear action estimate}
\|\phi\cdot g\|_k \;\leq\; \LP(\|g\|_k \,+\, \|\phi-\Id\|_{k+s})
\end{equation}
\end{rem}

\begin{rem}\label{action nonlinearity}
If the action is linear, we may equivalently simplify the hypotheses: we may discard $g$ entirely in \eqref{action estimate 1}, and, since each term will be first order in norms of $f$, we may replace $1+\|f\|_k$ with $\|f\|_k$ in \eqref{action estimate 2}; furthermore, in both estimates the polynomials will not have higher powers of $\|f\|$.  In \cite{MirandaMonnierZung}, our source for Theorem \ref{Miranda-Monnier-Zung}, only linear (and, in some sense, \emph{affine}) SCI-actions are considered.

Even considering this difference, our definition is a bit stronger than in \cite{MirandaMonnierZung}---as per our definition of Leibniz polynomials, $\LP$, we do not permit more than one factor of high norm degree in each monomial.
\end{rem}

\begin{example}\label{pullbacks are SCI}
The principal example of SCI-actions are local diffeomorphisms (Example \ref{local diffeo are SCI}) acting by pushforward or pullback on tensors, with derivative loss $s=1$.  See Section \ref{pushforwards and pullbacks} for details.
\end{example}

\subsection{Abstract normal form theorem}

The following theorem is adapted from \cite[Thm. 7.7]{MirandaMonnierZung}, with some changes.  The idea of applying the Nash-Moser fast convergence method to shrinking neighbourhoods of a point goes back to Conn's work in \cite{Conn}.  After the statement of the theorem, we give the interpretation of each SCI-space and map named in the theorem, as it applies to our situation, and show how the theorem may be used to prove our Main Lemma.  Then we address the differences between the theorem as we have presented it and as it appears in \cite{MirandaMonnierZung}.  A similar theorem is in \cite{MonnierZung}.

\begin{thm}\label{Miranda-Monnier-Zung}[MMZ]
Let $\sr{T}$ be an SCI-space, $\sr{F}$ an SCI-subspace of $\sr{T}$, and $\sr{I}$ a subset of $\sr{T}$ containing $0$.  Denote $\sr{N} = \sr{F} \cap \sr{I}$.  Let $\pi : \sr{T} \to \sr{F}$ be a projection commuting with restriction and inclusion, and let $\zeta = Id - \pi$.  Suppose that, for all $\epsilon \in \sr{T}$, and all $k \in \N$ sufficiently large,
\begin{equation}\label{MMZ1}
\|\zeta(\epsilon)\|_k \leq \LP(\|\epsilon\|_k).
\end{equation}

Let $\sr{G}$ be an SCI-group acting on $\sr{T}$, and let $\sr{G}^0 \subset \sr{G}$ be a closed subset of $\sr{G}$ preserving $\sr{I}$.

Let $\sr{V}$ be an SCI-space.  Suppose there exist maps
$$\sr{I} \oto{V} \sr{V} \oto{\Phi} \sr{G}^0$$
(with $\Phi(v)$ denoted $\Phi_v$) and $s \in \N$ such that, for every $\epsilon \in \sr{I}$, every $v, w \in \sr{V}$, and for large enough $k$,
\begin{eqnarray}
\|V(\epsilon)\|_k &\leq& \LP(\|\zeta(\epsilon)\|_{k+s},\, 1+\|\epsilon\|_{k+s}) \label{MMZ2} \\[4pt]
\|\Phi_v - Id\|_k &\leq& \LP(\|v\|_{k+s}), \label{MMZ3} \quad\mathnormal{and} \\[4pt]
\|\Phi_v\cdot \epsilon - \Phi_w\cdot \epsilon\|_k &\leq& \LP(\|v - w\|_{k+s},\, 1 + \|v\|_{k+s} + \|w\|_{k+s}+\|\epsilon\|_{k+s}) \notag \\
& & \;+\; \LP\left((\|v\|_{k+s} + \|w\|_{k+s})^2,\, 1+\|\epsilon\|_{k+s}\right) \label{MMZ4}
\end{eqnarray}

Finally, suppose there is a real positive $\delta$ such that for any $\epsilon\in\sr{I}$,
\begin{equation}
\|\zeta(\Phi_{V(\epsilon)}\, \cdot\, \epsilon)\|_k \leq \|\zeta(\epsilon)\|_{k+s}^{1+\delta}\, Poly\left(\|\epsilon\|_{k+s}, \|\Phi_{V(\epsilon)}-\Id\|_{k+s}, \|\zeta(\epsilon)\|_{k+s}, \|\epsilon\|_k\right) \label{MMZ5}
\end{equation}
where in this case the degree of the polynomial in $\|\epsilon\|_{k+s}$ does not depend on $k$.

Then there exist $l \in \N$ and two constants $\alpha>0$ and $\beta>0$ with the following property: if $\epsilon \in \sr{I}_R$ such that $\|\epsilon\|_{2l-1,R} < \alpha$ and $\|\zeta(\epsilon)\|_{l,R} < \beta$, there exists $\psi \in \sr{G}^0_{R/2}$ such that $\psi \cdot \epsilon \in \sr{N}_{R/2}$.
\end{thm}

\begin{rem}\label{interpretation of theorem}
In our case, the interpretation of the terms in this theorem will be as follows:
\begin{itemize}
\item $\sr{T}$ will be the space of deformations, $\Gamma(\^ ^2 L^*)$, of the standard generalized complex structure on $\C^n$.
\item $\sr{F} \subset \sr{T}$ will be the space of $(2,0)$-bivectors, the ``normal forms'' without the integrability condition---thus $\zeta(\epsilon) = \epsilon_2 + \epsilon_3$ is the non-bivector part of $\epsilon$, which we seek to eliminate.
\item $\sr{I}$ will be the integrable deformations, and thus $\sr{N} = \sr{F} \cap \sr{I}$ will be the holomorphic Poisson bivectors, i.e., the ``normal forms.''
\item $V$ produces a generalized vector field from a deformation.  As per Definition \ref{define V}, we will take $V(\epsilon) = P([\epsilon_1, P\epsilon_3] - \epsilon_2 - \epsilon_3)$.
\item $\sr{G}=\sr{G}^0$ will be the local generalized diffeomorphisms fixing the origin, acting on deformations as in Definition \ref{Courant action on deformation}, and $\Phi_v \in \sr{G}$ will be the time-1 flow of the generalized vector field $v$ as in Definition \ref{Courant flow}.
\end{itemize}

While estimates \eqref{MMZ2} through \eqref{MMZ4} in the hypotheses of the theorem may be understood as continuity conditions of some sort, estimate \eqref{MMZ5} expresses the fact that we have the ``correct'' algorithm, that is, each iteration will have a quadratically small error.
\end{rem}

\subsection{Proving the Main Lemma}\label{prove main lemma}
In Section \ref{verifying SCI} we verify that local generalized diffeomorphisms form a closed SCI-group, and that they act by SCI-action on the deformations.  In Section \ref{verifying hypotheses} we show that the other hypotheses of Theorem \ref{Miranda-Monnier-Zung}, estimates \eqref{MMZ1} through \eqref{MMZ5}, hold true for the interpretation above.  Thus, the theorem applies, and we conclude the following: if $\epsilon$ is a smooth, integrable deformation of the standard generalized complex structure in a neighbourhood of the origin in $\C^n$, and if $\|\epsilon\|_k$ is small enough (for some $k$ given by the theorem), then there is a local generalized diffeomorphism $\Psi$ fixing the origin such that $\zeta(\Psi\cdot\epsilon)=0$.  Then the Maurer-Cartan equations \eqref{MC1} and \eqref{MC2} tell us that $\Psi\cdot\epsilon$ is a holomorphic Poisson bivector, and thus the Main Lemma is proved.

\subsection{Sketch of the proof of Theorem \ref{Miranda-Monnier-Zung}}\label{sketch of MMZ proof}
The proof of Theorem \ref{Miranda-Monnier-Zung} is essentially in \cite[Appendix 1]{MirandaMonnierZung}, with the idea of the argument coming from \cite{Conn}.  We give a rough sketch of the argument as it appears in \cite{MirandaMonnierZung}.

We are given $\epsilon=\epsilon^0 \in \sr{I}_R$ and will construct a sequence $\epsilon^1,\epsilon^2,\ldots$.  We choose a sequence of smoothing parameters $t_d$, with $t_0>1$ (determined by the requirements of the proof) and $t_{d+1}=t_d^{3/2}$.  Then for $d>0$ let $v_d = S_{t_d} V(\epsilon^d)$, where $S_{t_d}$ is the smoothing operator, let $\Phi_{d+1} = \Phi_{v_d}$, and let $\epsilon^{d+1}=\Phi_{d+1}\cdot \epsilon^d$.  The $V(\epsilon^d)$ is smoothed before taking $\Phi_{d+1}$ so that we have some control over the loss of derivatives at each stage.

If $\|\epsilon\|_{2l-1}$ and $\|\zeta(\epsilon)\|_l$ are small enough, for certain $l$, and if $t_0$ is chosen carefully, then it will follow that the $\|\Phi_d\|_k$ approach zero quickly and the corresponding radii have lower bound $R/2$; by continuity properties of SCI-groups and -actions, the compositions $\Psi_{d+1} = \Phi_{d+1}\cdot\Psi_d$ will have a limit, $\Psi_\infty$, and the $\epsilon^d$ will have a limit, $\epsilon^\infty = \Psi_\infty\cdot \epsilon$.  Furthermore, it will follow that $\zeta(\epsilon^\infty) = \lim\zeta(\epsilon^d) = 0$, so $\epsilon^\infty\in\sr{N}$.

The above facts follow from two inductive lemmas.  The first fixes a norm degree, $l$, and an exponent, $A>1$, (determined by requirements of the proof) and proves inductively that for all $d\geq0$,
$$\begin{array}{llcl}
(1_d) & \|\Phi^{d+1}-\Id\|_{l+s} &<& t_d^{-1/2} \\
(2_d) & \|\epsilon^d\|_l &<& C\frac{d+1}{d+2} \\
(3_d) & \|\epsilon^d\|_{2l-1} &<& t_d^A \\
(4_d) & \|\zeta(\epsilon^d)\|_{2l-1} &<& t_d^A \\
(5_d) & \|\zeta(\epsilon^d)\|_l &<& t_d^{-1}
\end{array}$$
The second lemma uses the first to prove by induction on $k$ that, for all $k\geq l$, there is $d_k$ large enough such that for all $d\geq d_k$,
 $$\begin{array}{llcl}
(i) & \|\Phi^{d+1}-\Id\|_{k+s+1} &<& C_k t_d^{-1/2} \\
(ii) & \|\epsilon^d\|_{k+1} &<& C_k\frac{d+1}{d+2} \\
(iii) & \|\epsilon^d\|_{2k-1} &<& C_k t_d^A \\
(iv) & \|\zeta(\epsilon^d)\|_{2k-1} &<& C_k t_d^A \\
(v) & \|\zeta(\epsilon^d)\|_k &<& C_k t_d^{-1}
\end{array}$$

Given this setup, the proofs simply proceed in order through $1_d,\ldots,5_d,i,\ldots,iv$ by application of the hypotheses of Theorem \ref{Miranda-Monnier-Zung}, the continuity conditions for SCI-groups and SCI-actions, and the property of the smoothing operators.

\begin{rem}\label{justify changes}
The differences between the theorem as we have presented it and as it appears in \cite{MirandaMonnierZung} include notational and other minor changes, which we do not remark upon, and changes to the estimates coming from the nonlinearity of our action.  Our estimates imply theirs if any instance of $\|\epsilon\|_p$, $\|\zeta(\epsilon)\|_p$ or $\|\Phi-\Id\|_p$ is replaced with the nonlinear $\LP(1+\|\epsilon\|_p)$, $\LP(\|\zeta(\epsilon)\|_p)$ or $\LP(\|\Phi-\Id\|_p)$ respectively.  But this is not a problem---the estimates are \emph{locally equivalent} (we will be precise), and thus are valid over the sequence defined above.

To see why, we note that in the lemmas, $\|\epsilon^d\|_p$ only appears with $p\leq2l-1$ in the first case or $p\leq2k-1$ in the second.  But then
$$\LP(1+\|\epsilon^d\|_p) \,=\, (1+\|\epsilon\|_p)\,Poly(\|\epsilon^d\|_{\fl{p/2}+1}) \,\leq\, (1+\|f\|_p)\,Poly(\|\epsilon^d\|_l)$$
(and respectively for $k$.)  But the inductive hypothesis has that $\|\epsilon^d\|_l$ (resp. $\|\epsilon^d\|_k$) is bounded by a constant, so this extra factor does no harm.  Similarly, the higher-order terms on, eg., $\|\zeta(\epsilon^d)\|_{2l-1}$ are vanishingly small by the inductive hypothesis.

The remaining concern, then, is for $1+\|\epsilon\|_p$ in place of $\|\epsilon\|_p$.  This is already dealt with implicitly in the \emph{affine} version of the theorem in \cite{MirandaMonnierZung}: the space $\sr{T}$ may be embedded affinely in $\C \dsum \sr{T}$, by $\epsilon \mapsto (1,\epsilon)$, with the norm $\|(1,\epsilon)\|_p = 1 + \|\epsilon\|_p$.  The constraint $\alpha$ in the hypothesis, $\|\epsilon\|_{2l-1}\leq\alpha$, in the original theorem can always be chosen greater than $1$, so in the affine context we simply require that $\|\epsilon\|_{2l-1} \,\leq\, \alpha'=\alpha-1$.
\end{rem}

\section{Verifying the SCI estimates}\label{verifying SCI}

In this section we explain how the particular objects named in Remark \ref{interpretation of theorem} satisfy the SCI definitions.

\subsection{Norms}\label{norm section}

Throughout this \thesis{chapter}\article{paper} we will use various kinds of $C^k$ $\sup$-norms.

\begin{defn}\label{norms 1}
Let $X \in \R^q$ or $\C^q$.  $X_i$ is the $i$-th component.  Then let
$$\|X\| = \sup_i |X_i|.$$
Similarly, if $A=[a_{ij}]$ is an $n\times n$ matrix, let $\|A\| = \sup_{i,j} |a_{ij}|$.
\end{defn}
\begin{rem}\label{matrix inverse estimate}
Comparing our matrix norm to the operator norm $\|\cdot\|_{op}$, we have
$$\|A\| \,\leq\, \|A\|_{op} \,\leq\, n\,\|A\|.$$
Then if $\|A-\Id\| \,\leq\, \frac{1}{2n}$, $A$ is invertible and
$$\|A^{-1}\| \,\leq\, 2.$$
\end{rem}

\begin{defn}\label{norms}
Suppose now that $f$ is a vector-valued function, $f : U \to V$, where $U \subset \R^n$ or $\C^n$ and $V$ is a normed finite-dimensional vector space.  Then let
$$\|f\|_0 = \sup_{x\in U} \|X(x)\|.$$

Suppose furthermore that $f$ is smooth.  If $\alpha$ is a multi-index, then $f^{(\alpha)}$ is the corresponding higher-order partial derivative.  If $k$ is a non-negative integer, then $f^{(k)}$ is an array containing the terms $f^{(\alpha)}$ for $|\alpha|=k$.  Let
\begin{equation*}
\|f\|_k = \sup_{|\alpha| \leq k} \|f^{(\alpha)}\|_0 = \sup_{j \leq k} \|f^{(k)}\|_0.
\end{equation*}
\end{defn}

\begin{rem}
Ultimately, we will always be working on the manifold $\C^n$ or a subset thereof.  Using the standard trivialization of the tangent and cotangent bundles, Definitions \ref{norms 1} and \ref{norms} give us a nondecreasing family of norms, $\|\cdot\|_k$, on smooth tensor fields on subsets of $\C^n$.  This applies to generalized vector fields, $B$-fields, and higher rank tensors (including deformations in $\^ ^2 L^*$).  However, for technical reasons, we must use a slightly unusual norm for Courant automorphisms:
\end{rem}

\begin{defn}
If $\Phi = (B,\phi)$ is a Courant automorphism, then we usually only take norms of $\Phi - \Id = (B,\phi-\Id)$.  Considering $\phi-\Id$ as just a function from a subset of $\C^n$ to $\C^n$, let $\|\Phi-\Id\|_0 = \sup(\|B\|_0,\,\|\phi-\Id\|_0)$; but if $k\geq1$, then let
\begin{equation}
\|\Phi-\Id\|_k = \sup(\|B\|_{k-1},\,\|\phi-\Id\|_k).
\end{equation}
The difference in degree between $B$ and $\phi$ reflects the fact that $B$ acts on derivatives while $\phi$ acts on the underlying points of the manifold.
\end{defn}


\subsection{Pushforwards and pullbacks}\label{pushforwards and pullbacks}

As mentioned in Examples \ref{local diffeo are SCI} and \ref{pullbacks are SCI},
\begin{lem}\label{functions are SCI}
Local diffeomorphisms from the closed balls $B_r \subset \R^n$ to $\R^n$ fixing the origin form an SCI-group under composition (see Example \ref{local diffeo are SCI}) with constant $c=2n$.  Furthermore, the pullback action, $\phi^*f = f\comp\phi$, of a local diffeomorphism $\phi$ on a function $f:B_r\to\C^p$, is a right SCI-action, with derivative loss $s=1$.
\end{lem}
As we said earlier, our definitions of SCI-group and SCI-action are slightly different from \cite{MirandaMonnierZung}: for SCI-groups, \eqref{inverse estimate} is stronger, and \eqref{composition estimate 1} and \eqref{composition estimate 2} have the same first-order behaviour in each group element, while having possibly higher-order terms (but only in $\fl{k/2}+1$ norms).  For SCI-actions, \eqref{action estimate 1} and \eqref{action estimate 2} are nonlinear counterparts to the conditions in \cite{MirandaMonnierZung}.
\begin{proof}
The proof of Lemma \ref{functions are SCI}, including the existence of compositions and inverses at the correct radii and the various continuity estimates, is essentially in \cite{Conn} (and, eg., \cite{MonnierZung}), with minor differences as noted.  We show only the proof of \eqref{inverse estimate}---the SCI-group continuity estimate for inverses---since it gives the flavour of the proofs of the other estimates.

Let $\phi$ and $\psi$ be local diffeomorphisms.  We proceed by induction on the degree of the norm.  If $\alpha=(\alpha_1,\ldots,\alpha_n)$ is a multi-index, we denote the $\alpha$-order partial derivative $D_\alpha$.  Suppose that \eqref{inverse estimate} holds for degree less than $k$, that is, whenever $|\alpha| < k$,
$$\|D_\alpha(\phi^{-1}-\psi^{-1})\|_0 \,\leq\, \LP(\|\phi-\psi\|_{|\alpha|},1+\|\psi-\Id\|_{|\alpha|}).$$
(This certainly holds for $|\alpha|=0$, given the hypothesis that $\|\phi-\Id\|_1 \leq 1/2n$ and likewise for $\psi$.)

Now suppose that $|\alpha| = k$.  We have the trivial identity
\begin{equation}\label{e80}
0 \;=\; D_\alpha\left((\phi-\psi)\comp\phi^{-1}\right) + D_\alpha\left(\psi\comp\phi^{-1} - \psi\comp\psi^{-1}\right)
\end{equation}

To compute the derivative $D_\alpha(\psi\comp\phi^{-1} - \psi\comp\psi^{-1})$ at $x\in B_r$, we make repeated applications of the chain rule and Leibniz rule, so that we have a sum of terms each of which has the form, for some $|\beta|\leq|\alpha|$,
\begin{equation}\label{e81}
D_\beta\psi|_{\phi^{-1}(x)}\cdot Q_\beta\left(\phi^{-1}\right)|_x - D_\beta\psi|_{\psi^{-1}(x)}\cdot Q_\beta\left(\psi^{-1}\right)|_x,
\end{equation}
where $Q_\beta(\phi^{-1})$ is a polynomial expression in derivatives of $\phi^{-1}$ up to order $\left.|\alpha|+1-|\beta|\right.$ (and likewise for $Q_\beta(\psi^{-1})$).  We remark that each term like \eqref{e81} will have at most one factor with higher derivatives than $\fl{k/2}+1$. Equivalently, \eqref{e81} is
\begin{eqnarray*}
D_\beta\psi|_{\psi^{-1}(x)}\cdot \left.\left(Q_\beta(\phi^{-1}) - Q_\beta(\psi^{-1})\right)\right|_x
\;+\; D_\beta(\psi|_{\phi^{-1}(x)} - \psi_|{\psi^{-1}(x)})\cdot Q_\beta\left(\phi^{-1}\right)|_x
\end{eqnarray*}

When $|\beta|=1$, this is, for some $i$,
\begin{equation}\label{e002}
\frac{\del}{\del x_i} \psi|_{\psi^{-1}(x)} \cdot D_\alpha(\phi^{-1}-\psi^{-1})|_x \;+\; D_\beta(\psi|_{\phi^{-1}(x)} - \psi|_{\psi^{-1}(x)})\cdot D_\alpha\phi^{-1}|_x.
\end{equation}
Thus, by inverting the matrix $a_{ij}=\frac{\del}{\del x_i} \psi_j|_{\psi^{-1}(x)}$ and applying it to the first term of expressions of the form \eqref{e002}, we may solve \eqref{e80} for $D_\alpha(\phi^{-1}-\psi^{-1})|_x$.  The solution will be a polynomial in:
\begin{itemize}
\item a matrix inverse whose norm is bounded by $2$
$$(\textnormal{since}\; \left\|\frac{\del}{\del x_i} \psi_j|_{\psi^{-1}(x)}-\Id\right\| \leq \frac{1}{2n}).$$
\item derivatives of $\psi$ up to order $k$,
\item derivatives of $\phi^{-1}-\psi^{-1}$ up to order $k-1$,
\item derivatives of $\phi-\psi$ up to order $k$,
\item derivatives of $\phi^{-1}$ up to order $k$, and
\end{itemize}

What if we compute the norm of this solution?  In the special case where $\phi=\Id$, by applying the induction hypothesis and combining the Leibniz polynomials we obtain that $\|\psi^{-1}-\Id\|_k \,\leq\, \LP(\|\psi-\Id\|_k)$.   Returning to the general case, this gives us the bound on $\|\phi^{-1}-\Id\|_k$ that we need to complete the proof.
\end{proof}

\begin{lem}\label{vector bundle is SCI}
Let $E=B_r \times V$ be a trivial rank-$n$ vector bundle over the closed ball $B_r \subset \C^n$, for each $0<r\leq1$.  Then the the vector bundle automorphisms covering the identity, $\Aut(E)$, form an SCI-group with constant $c=2n$, and act by SCI-action on the sections, $\Gamma(E)$, with derivative loss $s=0$.
\end{lem}
The necessary estimates follow in a straightforward way from the estimates for functions (Lemma \ref{functions are SCI}) applied to matrix calculations, using Remark \ref{matrix inverse estimate} for the inverse estimate \eqref{inverse estimate}.

The following lemma tells us that an action will be SCI if it is composed of SCI-actions in a certain sense.  In fact, we don't use any of the algebraic structure of actions.
\begin{lem}\label{composite action}
Let $\sr{A}$, $\sr{B}$, $\sr{G}$ and $\sr{H}$ be SCI-spaces, where $\sr{A}$, $\sr{B}$ and $\sr{G}$ each have a distinguished element $\Id$, let
$$\cdot:\sr{A}\times\sr{H} \to \sr{H} \quad\textnormal{and}\quad \cdot:\sr{B}\times\sr{H} \to \sr{H}$$
be operations satisfying estimates \eqref{action estimate 1} and \eqref{action estimate 2} with derivative loss $s_1$ and $s_2$ respectively (no other SCI-action structure is assumed), and let
$$\cdot:\sr{G}\times\sr{H} \to \sr{H}$$
be an operation such that, for each $\phi \in \sr{G}$ there are $\phi_A \in \sr{A}$ and $\phi_B \in \sr{B}$ (with $\Id_A=\Id$ and $\Id_B=\Id$) such that for each $h \in H$,
$$\phi\cdot h = \phi_A\cdot(\phi_B\cdot h).$$
Finally, suppose there is an $s_3$ such that for any $\phi,\psi \in \sr{G}$ and large enough $k$,
\begin{equation}\label{action comparison}
\|\phi_A-\psi_A\|_k \,\leq\, \LP(\|\phi-\psi\|_{k+s_3}) \quad\textnormal{and}\quad
\|\phi_B-\psi_B\|_k \,\leq\, \LP(\|\phi-\psi\|_{k+s_3}).
\end{equation}
Then the operation of $\sr{G}$ on $\sr{H}$ also satisfies estimates \eqref{action estimate 1} and \eqref{action estimate 2} with derivative loss $s_1+s_2+s_3$.
\end{lem}
\begin{proof}
If $\phi\in\sr{G}$ and $f,g \in \sr{H}$, we apply estimate \eqref{action estimate 1} for the actions of $\sr{A}$ and $\sr{B}$:
\begin{eqnarray*}
\|\phi\cdot f - \phi\cdot g\|_k &=& \|\phi_A\cdot(\phi_B\cdot f) - \phi_A\cdot(\phi_B\cdot g)\|_k \\
&\leq& \LP(\|\phi_B\cdot f - \phi_B\cdot g)\|_k,\, 1+\|\phi_A-\Id\|_{k+s}) \\
&\leq& \LP(\LP(\|f-g\|_k,\, 1+\|\phi_B-\Id\|_{k+s}),\, 1+\|\phi_A-\Id\|_{k+s})
\end{eqnarray*}
Composing the Leibniz polynomials and using \eqref{action comparison} for $\|\phi_A-\Id\|$ and $\|\phi_B-\Id\|$, we see that estimate \eqref{action estimate 1} holds for the action of $\sr{G}$.

If $\phi,\psi \in \sr{G}$ and $f \in \sr{H}$, then
\begin{eqnarray}
\|\psi\cdot f - \phi\cdot f\|_k &=& \|\psi_A\cdot(\psi_B\cdot f) \,-\, \phi_A\cdot(\phi_B\cdot f)\|_k \notag \\
&\leq& \|\psi_A\cdot(\psi_B\cdot f) - \phi_A\cdot(\psi_B\cdot f)\|_k \label{e222} \\
& & \qquad\qquad +\; \|\phi_A\cdot(\psi_B\cdot f) - \phi_A\cdot(\phi_B\cdot f)\|_k. \label{e223}
\end{eqnarray}
Similarly to above, we apply estimate \eqref{action estimate 2} to line \eqref{e222} and estimate \eqref{action estimate 1} to line \eqref{e223}, and then vice versa, followed by the estimates \eqref{action comparison}, and we see that \eqref{action estimate 2} holds for the action of $\sr{G}$, with total derivative loss $s_1 + s_2 + s_3$.
\end{proof}

\begin{lem}\label{pushforward and pullback are SCI}
The action of local diffeomorphisms by pushforward or by pullback on tensors constitutes an SCI-action with derivative loss $s=1$.
\end{lem}
\begin{proof}
If $\phi:B_r\to\R^n$ is a local diffeomorphism with $\|\phi-\Id\|_1\leq1/2n$ and $v:B_r \to TB_r \iso B_r\times\R^n$ is a vector field, then the pushforward of $v$ by $\phi$ may be decomposed as
$$\phi_*v = (D\phi\cdot v)\comp\phi^{-1},$$
where the derivative $D\phi$ is treated as a matrix-valued function, acting on $v$ by multiplication.  Similarly, if $\theta:B_r \to T^*B_r \iso B_r\times\R^n$ is a 1-form, then the pushforward of $\theta$ may be written
$$\phi_*\theta = ((D\phi^T)^{-1}\cdot\theta)\comp\phi^{-1},$$
where the $(D\phi^T)^{-1}$ is the matrix transpose and inverse at each point.  We regard $D\phi\cdot v$ and $(D\phi^T)^{-1}\cdot\theta$ as functions from $B_r$ to $\R^n$, in which case precomposotion by $\phi^{-1}$ acts by SCI-action with derivative loss $s=1$; and $D\phi$ and $D\phi^{-1}$ are automorphisms of the vector bundle $B_r \times \R^n$, and thus act by SCI-action with derivative loss $s=0$.  If $\psi$ is another local diffeomorphism then
$$\|D\psi-D\phi\|_{k-1} \,\leq\, \|\psi-\phi\|_k,$$
and
$$\|(D\psi^T)^{-1}-(D\phi^T)^{-1}\|_{k-1}  \,\leq\, \LP(\|D\psi-D\phi\|_{k-1}),$$
so by taking a degree-shifted norm on the $D\psi-D\phi$, we are in the case of Lemma \ref{composite action}.

A similar argument works for pullbacks, and for higher-rank tensors.
\end{proof}

\subsection{Estimates of Courant actions}

\begin{lem}
Local generalized diffeomorphisms on the balls $B_r\subset\C^n$ form an SCI-group.
\end{lem}
\begin{proof}
Recall (Definition \ref{Courant automorphism}) that a local generalized diffeomorphism $\Phi$ may be represented $(B,\phi)$, where $B$ is a closed 2-form and $\phi$ is a local diffeomorphism.  If $\Psi=(B',\psi)$ is another local generalized diffeomorphism, then
$$\Phi\comp\Psi = (\psi^*B+B',\phi\comp\psi) \qquad\textnormal{and}\qquad \Phi^{-1} = (-(\phi^{-1})^*B,\phi^{-1}).$$
We already know that local diffeomorphisms form an SCI-group, and
$$r(1-c\|\Phi-\Id\|_{1,r}) \leq r(1-c\|\phi-\Id\|_{1,r}),$$
thus products and inverses exist at precisely the radii required in the definition.  Furthermore, estimates \eqref{inverse estimate}, \eqref{composition estimate 1} and \eqref{composition estimate 2} will be satisfied for the diffeomorphism term, thus we only need to check them for the $B$-field term.

We first bound $\Phi^{-1}-\Psi^{-1}$ (estimate \eqref{inverse estimate}). Recall that the norm degree is shifted for the $B$-field term.  Since pushforward is an SCI-action with derivative loss 1, we may use the continuity estimates for SCI-actions:
\begin{eqnarray*}
& & \|(\phi^{-1})^*B - (\psi^{-1})^*B'\|_{k-1} \\
&=& \|\phi_*B - \psi_*B'\|_{k-1} \\
&\leq& \|\phi_*B-\phi_*B')\|_{k-1} + \|\phi_*B' - \psi_*B'\|_{k-1} \\
&\leq& \LP(\|B-B'\|_{k-1},\, 1+\|\phi-\Id\|_k) \;+\; \LP(\|B'\|_k,\, \|\phi-\psi\|_k,\, \|\psi-\Id\|_k) \\
&\leq& \LP(\|\Phi-\Psi\|_k,\, 1+\|\Phi-\Id\|_k) \;+\; \LP(\|\Psi-\Id\|_{k+1},\, \|\Phi-\Psi\|_k,\, \|\Psi-\Id\|_k)
\end{eqnarray*}
We use $1+\|\Phi-\Id\|_k \,\leq\, 1+\|\Psi-\Id\|_k+\|\Phi-\Psi\|_k$ and combine the Leibniz polynomials to get estimate \eqref{inverse estimate}.

Now we bound $\Phi\comp\Psi-\Phi$ (estimate \eqref{composition estimate 1}).  We use estimate \eqref{action estimate 2} for pullbacks on the second line:
\begin{eqnarray*}
\|\psi^*B+B'-B\|_{k-1} &\leq& \|\psi^*B-B\|_{k-1} \,+\, \|B'\|_{k-1} \\
&\leq& \LP(1+\|B\|_k,\, \|\psi-\Id\|_k) \,+\, \|B'\|_{k-1} \\
&\leq& \LP(1+\|\Phi-\Id\|_{k+1},\, \|\Psi-\Id\|_k) \,+\, \|\Psi-\Id\|_k \\
&\leq& \LP(1+\|\Phi-\Id\|_{k+1},\, \|\Psi-\Id\|_k)
\end{eqnarray*}

Finally, we bound $\Phi\comp\Psi-\Id$ (estimate \eqref{composition estimate 2}).  We use estimate \eqref{action estimate 1} on the second line:
\begin{eqnarray*}
\|\psi^*B + B'-0\|_{k-1} &\leq& \|\psi^*B\|_{k-1} \,+\, \|B'\|_{k-1} \\
&\leq& \LP(1+\|\psi-\Id\|_k,\, \|B\|_{k-1}) \,+\, \|B'\|_{k-1} \\
&\leq& \LP(1+\|\Psi-\Id\|_k,\, \|\Phi-\Id\|_k) \,+\, \|\Psi-\Id\|_k
\end{eqnarray*}
and the result follows.
\end{proof}

\begin{rem}
Regarding closedness, we consider a $C^\infty$-convergent sequence of local generalized diffeomorphisms,
$$\lim_{n\to\infty} (B_n,\phi_n) = (\lim_{n\to\infty} B_n, \lim_{n\to\infty} \phi_n) = (B,\phi).$$
Since local diffeomorphisms are closed, $\phi$ is a local diffeomorphism; if each $dB_n=0$ then, since the convergence is $C^\infty$, $dB=0$; thus $(B,\phi)$ is a local generalized diffeomorphism.  So the local generalized diffeomorphisms are closed.
\end{rem}

\begin{lem}
The action of local generalized diffeomorphisms on the deformations, $\Gamma(\^ ^2 L^*)$, of the standard generalized complex structure on $B_r \subset \C^n$, as in Definition \ref{Courant action on deformation}, is a left SCI-action.
\end{lem}
\begin{proof}
Since this action is defined over precisely the same $B_r$ as pushforward by local diffeomorphisms, we need only check the estimates \eqref{action estimate 1} and \eqref{action estimate 2}.  We consider $\epsilon \in \Gamma(L)$ as a map from $L \to \bar{L}$.  A section of $L_{\Phi\cdot\epsilon}$ is uniquely represented as $u + (\Phi\cdot\epsilon)(u)$, for some $u \in \Gamma(L)$.  By definition, this is also the image of $v + \epsilon(v)$ under $\Phi_*$, for some $v \in \Gamma(L)$.  Then
\begin{eqnarray*}
u + (\Phi\cdot\epsilon)(u) &=& \Phi_*(v+\epsilon(v)) \\
&=& \left(\Phi_*\comp(\Id+\epsilon)\right)^L_L(v) + \left(\Phi_*\comp(\Id+\epsilon)\right)^{\bar{L}}_L(v)
\end{eqnarray*}
The $L$ and $\bar{L}$ components agree, so $v=\left(\left(\Phi_*\comp(\Id+\epsilon)\right)^L_L\right)^{-1}(u)$ and
\begin{equation}\label{action formula}
(\Phi\cdot\epsilon)(u) \,=\, \left(\Phi_*\comp(\Id+\epsilon)\right)^{\bar{L}}_L \comp \left(\left(\Phi_*\comp(\Id+\epsilon)\right)^L_L\right)^{-1}(u)
\end{equation}

Similarly to the proof of Lemma \ref{pushforward and pullback are SCI}, we interpret $u$ as a map from $B_r$ to $L\subset B_r \times \C^{2n}$, and we may write \eqref{action formula} in terms of fibrewise linear maps and postcomposition by diffeomorphisms.

Let
$$D\Phi : B_r \to \Aut(\C \tens \T_0B_r \dsum T^*_0B_r) \iso \Aut(\C^n\dsum\C^n)$$
be the trivialization of fibrewise action of the Courant automorphism $\Phi$.  If $\Phi=(B,\phi)$, then $D\Phi$ acts on each fibre, $\C\tens T_x\dsum T^*_x \iso \C^n\dsum \C^n$, by $(D\phi(x) \dsum (D\phi(x)^T)^{-1}) \cdot (1+B|_x)$; then, as a simple example, as in Lemma \ref{pushforward and pullback are SCI} we see that
$$\Phi_*u = (D\Phi)\cdot(u\comp\phi^{-1}).$$
Now for the case of \eqref{action formula} we may write
\begin{eqnarray*}
(\Phi\cdot\epsilon)(u) &=& \left(D\Phi\cdot(\Id+\epsilon)\right)^{\bar{L}}_L \cdot \left(\left(\left(D\Phi\cdot(\Id+\epsilon)\right)^L_L\right)^{-1}\comp\phi^{-1}\right)\cdot u
\end{eqnarray*}

We see that $\Phi\cdot\epsilon$ is constructed from $\epsilon$, $\Phi$, $D\Phi$ and $\Id$ through the operations of sum, matrix multiplication and matrix inverse, pushforward by functions, and restriction and projection (to $L$ and $\bar{L}$).  Each of these operations is an SCI-action in the weak sense of Lemma \ref{composite action}, so the result follows.
\end{proof}

\section{Checking the hypotheses of the abstract normal form theorem}\label{verifying hypotheses}

\subsection{Preliminary estimates}

\begin{lem}\label{bilinear estimate}
If $\Theta : V_1 \times V_2 \to W$ is a bilinear function between normed finite-dimensional vector spaces, and $f : U \to V_1$ and $g : U \to V_2$ are smooth on a compact domain $U$, then, applying $\Theta$ to $f$ and $g$ pointwise,
$$\|\Theta(f,g)\|_k \leq C (\|f\|_k \|g\|_0 + \|f\|_0 \|g\|_k) \leq C'\, \|f\|_k \|g\|_k,$$
and of course, $\|\Theta(f,g)\|_k \leq \LP(\|f\|_k,\|g\|_k)$.
\end{lem}
\begin{proof}
As remarked in Proposition \ref{interpolation inequality}, as a consequence of the existence of smoothing operators on spaces of smooth functions, the \emph{interpolation inequality} holds---for nonnegative integers $p \geq q \geq r$ and any function $f$ as above,
$$\|f\|_q^{p-r} \leq C \|f\|_r^{p-q} \|f\|_p^{q-r}.$$
From this inequality, the result follows by a standard argument (see \cite[Cor. II.2.2.3]{Hamilton}). 
\end{proof}

\begin{lem}\label{bracket estimate}
If $\alpha\in \Gamma(\^ ^i L^*)$ and $\beta\in \Gamma(\^ ^j L^*)$, then for $k\geq0$,
$$\|[\alpha,\beta]\|_k \,\leq\, C \left(\|\alpha\|_{k+1} \|\beta\|_1 + \|\alpha\|_1 \|\beta\|_{k+1}\right) \,\leq\, C' \|\alpha\|_{k+1} \|\beta\|_{k+1},$$
and of course, $\|[\alpha,\beta]\|_k \,\leq\, \LP(\|\alpha\|_{k+1},\, \|\beta\|_{k+1})$.
\end{lem}
\begin{proof}
If $\alpha$ and $\beta$ are generalized vector fields, there are pointwise-bilinear functions $\Theta$ and $\Lambda$ which express the Courant bracket formula \eqref{bracket formula} as
$$[\alpha,\beta] = \Theta(\alpha,\beta^{(1)}) - \Lambda(\beta,\alpha^{(1)}).$$
Then by Lemma \ref{bilinear estimate},
\begin{eqnarray*}
\|[\alpha,\beta]\|_k &\leq& C' (\|\alpha\|_k \|\beta^{(1)}\|_0 + \|\alpha\|_0 \|\beta^{(1)}\|_k \\
& & +\; \|\alpha^{(1)}\|_0 \|\beta\|_k + \|\alpha^{(1)}\|_k \|\beta\|_0 ) \\
&\leq& C \left(\|\alpha\|_{k+1} \|\beta\|_1 + \|\alpha\|_1 \|\beta\|_{k+1}\right)
\end{eqnarray*}
If $\alpha$ and $\beta$ are higher-rank tensors and the bracket is the generalized Schouten bracket, a suitable choice of $\Theta'$ and $\Lambda'$ will give the same result.
\end{proof}


\subsection{Verifying estimates \eqref{MMZ1}, \eqref{MMZ2}, \eqref{MMZ3} and \eqref{MMZ4}}

Recall that if $\epsilon = \epsilon_1 + \epsilon_2 + \epsilon_3 \in \Gamma(\^ ^2 L^*)$, with the terms being a bivector, a mixed term and 2-form respectively, then $\zeta(\epsilon) = \epsilon_2 + \epsilon_3$.  Then the following is an obvious consequence of our choice of norms.

\begin{lem}[Estimate \ref{MMZ1}]\label{projection estimate}
For all $\epsilon \in \Gamma(\^ ^2 L^*)$ and any $k$, $\|\zeta(\epsilon)\|_k \,\leq\, \|\epsilon\|_k$.
\end{lem}

We recall following estimate, taken from \cite{NijenhuisWoolf}, which was mentioned in Lemma \ref{existence of P}:
\begin{lem}\label{P estimate}
For all $\epsilon \in \Gamma(\^ ^2 L^*)$ and any $k$, $\|P\epsilon\|_k \,\leq\, C\,\|\epsilon\|_k$.
\end{lem}

\begin{lem}[Estimate \ref{MMZ2}]\label{V estimate}
For any $\epsilon \in \Gamma(\^ ^2 L^*)$ and large enough $k$,
$$\|V(\epsilon)\|_k \leq C\, \|\zeta(\epsilon)\|_{k+1}\, (1+\|\epsilon\|_{k+1}).$$
\end{lem}
\begin{proof}
$V(\epsilon) = P[\epsilon_1,P\epsilon_3] - P\zeta(\epsilon)$.  But $\|\epsilon_1\|_k \leq \|\epsilon\|_k$ and $\|\epsilon_3\|_k \leq \|\zeta(\epsilon)\|_k$, so by applying the triangle inequality and then Lemmas \ref{P estimate} and \ref{bracket estimate} the result follows.
\end{proof}

\begin{lem}[Estimate \ref{MMZ3}]\label{flow estimate}
For any $v \in \Gamma(L^*)$, any $0\leq t\leq1$ and large enough $k$,
$$\|\Phi_{tv} - Id\|_k \leq \LP(\|v\|_k)$$
\end{lem}
\begin{proof}
Let $v = X+\xi$, where $X$ is a vector field and $\xi$ is a 1-form, and let $\Phi_{tv} = (B_{tv},\phi_{tX})$.  From \cite{MirandaMonnierZung} we know that a counterpart of this lemma holds for the local diffeomorphism $\phi_{tX}$, therefore we are only concerned with $B_{tv}$. By the SCI-action estimate \eqref{action estimate 1} for pullbacks of differential forms,
\begin{equation}\label{e300}
\|\phi_{tX}^* d\xi\|_{k-1} \,\leq\, \LP(\|\xi\|_k,\, 1+\|\phi_{tX}-\Id\|_k)
\end{equation}
The counterpart of this Lemma in \cite{MirandaMonnierZung} tells us that
$$\|\phi_{tX}-\Id\|_k \leq \LP(\|X\|_k).$$
We plug this into \eqref{e300} and recall that $\|v\|_k = \sup(\|X\|_k,\,\|\xi\|_k)$; then,
$$\|\phi^*_{tX} d\xi\|_{k-1} \,\leq\, \LP(\|v\|_k).$$
But
\begin{eqnarray*}
\|B_{tv}\|_{k-1} &=& \left\| \int_0^t (\phi^*_{\tau X} d\xi)\, d\tau\right\|_{k-1} \\
&\leq& \int_0^t \left\|\phi^*_{\tau X} d\xi\right\|_{k-1}\, d\tau \\
&\leq& \int_0^t \LP(\|v\|_k)\, d\tau
\end{eqnarray*}
and the result follows.
\end{proof}

\begin{lem}[Estimate \ref{MMZ4}]
There is some $s$ such that, for any $v,w \in \Gamma(L^*)$, any integrable deformation $\epsilon \in \Gamma(\^ ^2 L^*)$, and large enough $k$,
\begin{eqnarray}
\|\Phi_v\cdot \epsilon - \Phi_w\cdot\epsilon\|_k &\leq& \LP(\|v - w\|_{k+1},\, 1 + \|v\|_{k+2} + \|w\|_{k+2}+\|\epsilon\|_{k+1}) \notag \\
& & \;+\; \LP\left((\|v\|_{k+3} + \|w\|_{k+3})^2,\, 1+\|\epsilon\|_{k+2}\right) \label{e302}
\end{eqnarray}
\end{lem}
\begin{proof}
The integral form of Proposition \ref{infinitesimal flow} tells us that
\begin{eqnarray*}
& &\Phi_v \cdot \epsilon - \Phi_w \cdot \epsilon \\
&=& \int_0^1 \left(\bar\del v + [v,\phi_{tv}\cdot\epsilon]\right)dt \,-\, \int_0^1 \left(\bar\del w + [w,\phi_{tw}\cdot\epsilon]\right)dt \notag \\
&=& \bar\del(v-w) \,+\, \int_0^1 [v-w\,,\;\phi_{tv}\cdot\epsilon]\,dt
\,+\, \int_0^1 [w\,,\;\phi_{tv}\cdot\epsilon-\phi_{tw}\cdot\epsilon]\,dt \\
\end{eqnarray*}
Integrating again, this time within the second Courant bracket, we get
\begin{eqnarray}
& & \bar\del(v-w) \;+\; \int_0^1 [v-w\,,\;\phi_{tv}\cdot\epsilon]\,dt \label{e301} \\
& & \quad +\; \int_0^1 \int_0^t \left[w\,,\; \bar\del(v-w)\, + [v,\phi_{\tau v}\cdot\epsilon] \,-\, [w,\phi_{\tau w}\cdot\epsilon]\right]\,d\tau\,dt \notag
\end{eqnarray}

To estimate $\|\Phi_v\cdot \epsilon - \Phi_w\cdot \epsilon\|_k$, we apply the triangle inequality to \eqref{e301}, and consider the three terms in turn.  Clearly, $\|\bar\del(v-w)\|_k$ is bounded by the first term in \eqref{e302}.

An aside: using the action estimate \eqref{nonlinear action estimate} and then Lemma \ref{flow estimate}, we see that
$$\|\phi_{tv}\cdot\epsilon\|_k \,\leq\, \LP(\|\epsilon\|_k + \|v\|_{k+1}).$$

Turning now to the second term of \eqref{e301}, we carry the norm inside the integral then, using the bracket estimate (Lemma \ref{bracket estimate}) and the above remark, we see that this term is bounded by the first term in \eqref{e302}.  Similarly, the third term in \eqref{e301} will be bounded by terms which have a factor of $\|v-w\|$, $\|w\|\cdot\|v\|$ or $\|w\|^2$.  Counting the total number of derivatives lost on each factor, the result follows.
\end{proof}

\subsection{Lemmas for estimate \eqref{MMZ5}}

The following lemma says that in our case the operator $\bar\del + [\epsilon_1,\cdot]$ is a good approximation of the deformed Lie algebroid differential $\bar\del + [\epsilon,\cdot]$.
\begin{lem}\label{estimate d_L}
For any $\epsilon \in \Gamma(\^ ^2 L^*)$ and large enough $k$,
$$\left\|\left(\bar\del V(\epsilon) + [\epsilon,V(\epsilon)]\right) - \left(\bar\del V(\epsilon) + [\epsilon_1,V(\epsilon)]\right)\right\|_k \,\leq\,
C \|\zeta(\epsilon)\|_{k+2}^2 \; (1 + \|\epsilon\|_{k+2})$$
\end{lem}
\begin{proof}
\begin{eqnarray*}
\left\|\left(\bar\del V(\epsilon) + [\epsilon,V(\epsilon)]\right) - \left(\bar\del V(\epsilon) + [\epsilon_1,V(\epsilon)]\right)\right\|_k &=& \|[\zeta(\epsilon),V(\epsilon)]\|_k \\
&\leq& C \|\zeta(\epsilon)\|_{k+1}\, \|V(\epsilon)]\|_{k+1} \\
&\leq& C \|\zeta(\epsilon)\|_{k+2}^2\, (1+\|\epsilon\|_{k+1}),
\end{eqnarray*}
(using Lemma \ref{V estimate} for the last step).
\end{proof}

The following lemma should be viewed as an approximate version of Proposition \ref{infinitesimal correction}, telling us that the infinitesimal action of $V(\epsilon)$ on $\epsilon$ almost eliminates the non-bivector component.
\begin{lem}\label{almost infinitesimal}
For an \emph{integrable} deformation $\epsilon \in \Gamma(\^ ^2 L^*)$ and large enough $k$,
$$\|\zeta\left(\bar\del V(\epsilon) + [\epsilon_1,V(\epsilon)] + \epsilon\right)\|_k \;\leq\; C \|\zeta(\epsilon)\|_{k+2}^2 \, (1 + \|\epsilon\|_{k+2})$$
\end{lem}
\begin{proof}
\begin{eqnarray*}
\bar\del V(\epsilon) + [\epsilon_1,V(\epsilon)] &=&
\bar\del P[\epsilon_1,P\epsilon_3] - \bar\del P\epsilon_2 - \bar\del P\epsilon_3 \\
& & +\; [\epsilon_1,P[\epsilon_1,P\epsilon_3]] - [\epsilon_1,P\epsilon_2] - [\epsilon_1,P\epsilon_3]
\end{eqnarray*}
The terms $[\epsilon_1,P\epsilon_2]$ and $[\epsilon_1,P[\epsilon_1,P\epsilon_3]]$ lie in $\^ ^2 T_{1,0}$, so
$$\zeta\left(\bar\del V(\epsilon) + [\epsilon_1,V(\epsilon)]\right) \;=\;
\bar\del P[\epsilon_1,P\epsilon_3] - \bar\del P\epsilon_2 - \bar\del P\epsilon_3
- [\epsilon_1,P\epsilon_3].$$
We apply the identity $\bar\del P = 1 - P \bar\del$ \eqref{P error} to the first three terms on the right hand side, giving us
\begin{eqnarray}
& & [\epsilon_1,P\epsilon_3] - P\bar\del[\epsilon_1,P\epsilon_3] - \epsilon_2 + P\bar\del\epsilon_2 - \epsilon_3 + P\bar\del\epsilon_3 - [\epsilon_1,P\epsilon_3] \notag \\
&=& -P\bar\del[\epsilon_1,P\epsilon_3] + P\bar\del\epsilon_2 + P\bar\del\epsilon_3 - \zeta(\epsilon) \label{error 1}
\end{eqnarray}
We now use the fact that $\epsilon$ satisfies the Maurer-Cartan equations, \eqref{MC1} through \eqref{MC4}.  By \eqref{MC4}, $P\bar\del\epsilon_3$ vanishes.  By \eqref{MC3},
\begin{equation}\label{term 1}
P\bar\del\epsilon_2 = -P\left( \frac{1}{2} [\epsilon_2,\epsilon_2] + [\epsilon_1,\epsilon_3] \right).
\end{equation}
By equation \ref{anticommute error},
\begin{eqnarray}
-P\bar\del[\epsilon_1,P\epsilon_3] &=& P[\epsilon_1,\bar\del P\epsilon_3]
- P[\bar\del\epsilon_1, P\epsilon_3] \notag \\
&=& P[\epsilon_1,\epsilon_3] - P[\bar\del\epsilon_1, P\epsilon_3] \label{term 2}
\end{eqnarray}
$P[\epsilon_1,\epsilon_3]$ cancels between \eqref{term 1} and \eqref{term 2}.  Thus \eqref{error 1} becomes
$$-\frac{1}{2} P[\epsilon_2,\epsilon_2] - P[\bar\del\epsilon_1, P\epsilon_3] - \zeta(\epsilon)$$
Applying \eqref{MC2} to $\bar\del\epsilon_1$, this is
$$-\frac{1}{2} P[\epsilon_2,\epsilon_2] + P[[\epsilon_1,\epsilon_2], P\epsilon_3] - \zeta(\epsilon)$$
Through applications of Lemmas \ref{P estimate} and \ref{bracket estimate}, we find that the first two terms have $k$-norm bounded by
$$C \left( \|\epsilon_2\|_{k+1}^2 \,+\, \|\epsilon_1\|_{k+2} \, \|\epsilon_2\|_{k+2} \, \|\epsilon_3\|_{k+1} \right)
\;\leq\; C \|\zeta(\epsilon)\|_{k+2}^2 \, (1 + \|\epsilon\|_{k+2})$$
The result follows.
\end{proof}

The following lemma is a version of Taylor's theorem.
\begin{lem}\label{Taylor estimate}
There is some $s$ such that for any integrable deformation $\epsilon \in \Gamma(\^ ^2 L^*)$, any $v \in \Gamma(L^*)$, and large enough $k$, 
$$\|(\Phi_v\cdot\epsilon - \epsilon) - (\bar\del v + [\epsilon,v])\|_k \,\leq\, \LP(1+\|\epsilon\|_{k+s},\, \|v\|_{k+s}^2).$$
\end{lem}
\begin{proof}
Applying the integral form of Proposition \ref{infinitesimal flow}, we see that
\begin{eqnarray*}
\left\|(\Phi_v\cdot\epsilon - \epsilon) - (\bar\del v + [\epsilon,v])\right\|_k
&=& \left\|\int_0^1 (\bar\del v + [\Phi_{tv}\cdot\epsilon,v])\, dt - (\bar\del v + [\epsilon,v])\right\|_k \\
&=& \left\|\int_0^1 [\Phi_{tv}\cdot\epsilon - \epsilon,\,v]\, dt\right\|_k \\
&\leq& \int_0^1 \LP(\|v\|_{k+1},\, \|\Phi_{tv}\cdot\epsilon - \epsilon\|_{k+1})\, dt
\end{eqnarray*}
Where in the last line we have carried the norm inside the integral and applied Lemma \eqref{bracket estimate}.  Applying the second axiom of SCI-actions \eqref{action estimate 2} and then Lemma \ref{flow estimate}, for some $s$ and $s'$,
\begin{eqnarray*}
\|\Phi_{tv}\cdot\epsilon - \epsilon\|_{k+1} &\leq& \LP(1+\|\epsilon\|_{k+s'},\, \|\Phi_{tv}-\Id\|_{k+s'}) \\
&\leq& \LP(1+\|\epsilon\|_{k+s},\|v\|_{k+s})
\end{eqnarray*}
Integrating the above estimate, the result follows.
\end{proof}

\begin{lem}[Estimate \ref{MMZ5}]
There is some $s$ such that, for any integrable deformation $\epsilon \in \Gamma(\^ ^2 L^*)$ and large enough $k$,
$$\|\zeta(\Phi_{V(\epsilon)} \cdot \epsilon)\|_k \leq \|\zeta(\epsilon)\|_{k+s}^{1+\delta} Poly(\|\epsilon\|_{k+s}, \|\Phi_{V(\epsilon)}-\Id\|_{k+s}, \|\zeta(\epsilon)\|_{k+s}, \|\epsilon\|_k),$$
where in this case the polynomial degree in $\|\epsilon\|_{k+s}$ does not depend on $k$.
\end{lem}
\begin{proof}
This is just an application of the triangle inequality using the estimates in this section.  We will show that, in the following series of approximations, terms on either side of a $\sim$ are close in the sense required:
$$\zeta(\Phi_{V(\epsilon)}\cdot\epsilon - \epsilon)
\;\sim\; \zeta(\bar\del V(\epsilon) + [\epsilon,V(\epsilon)])
\;\sim\; \zeta(\bar\del V(\epsilon) + [\epsilon_1,V(\epsilon)])
\;\sim\; -\zeta(\epsilon)$$
If so, then $\zeta(\Phi_{V(\epsilon)}\cdot\epsilon) \;\sim\; 0$ as required.

Applying the estimate for $V(\epsilon)$ (Lemma \ref{V estimate}) to Lemma \ref{Taylor estimate}, we see that
\begin{eqnarray*}
& & \|(\Phi_{V(\epsilon)}\cdot\epsilon - \epsilon) - (\bar\del V(\epsilon) + [\epsilon,V(\epsilon)])\|_k \\
& & \qquad\qquad   \leq\; \LP\left(1+\|\epsilon\|_{k+s},\, \|\zeta(\epsilon)\|_{k+s'+1}^2\,(1+\|\epsilon\|_{k+s'+1})\right).
\end{eqnarray*}
Applying $\zeta$ to the left hand side, this is the first approximation above.  (We remark that for large $k$, $\fl{(k+s)/2}+1 \leq k$, so we have a strictly limited degree in $\|\epsilon\|_l,\, l>k$.)  The remaining approximations are Lemma \ref{estimate d_L} (after applying $\zeta$ to its left hand side) and Lemma \ref{almost infinitesimal} respectively. 
\end{proof}

As remarked in Section \ref{prove main lemma}, we should now consider the Main Lemma proved.

\section{Main Lemma implies Main Theorem}\label{lemma implies theorem}

It is certainly the case that, near a complex point, a generalized complex structure is a deformation of a complex structure.  However, this deformation may not be small in the sense we need.  Therefore we use two means to control its size.

In this section, by $\delta_t : \C^n \to \C^n$ we will mean the dilation, $x \mapsto tx$.  If $\epsilon$ is a tensor on $\C^n$, then by $\delta_t \epsilon$ we mean the \emph{pushforward} of $\epsilon$ under the dilation map $x \mapsto tx$.  The complex structure on $\C^n$ is invariant under $\delta_t$; therefore if $\epsilon \in \Gamma(\^ ^2 L^*)$ is a deformation of the complex structure, then $\delta_t \epsilon = (0,\delta_t)\cdot\epsilon$ as in Definition \ref{Courant action on deformation}.

Suppose that $\epsilon \in \Gamma(\^ ^2 L^*)$, where $L^* = T_{1,0} \dsum T^*_{0,1}$, and that $\epsilon$ is decomposed into $\epsilon_1 + \epsilon_2 + \epsilon_3$, where $\epsilon_1$ is a bivector, $\epsilon_3$ is a 2-form and $\epsilon_2$ is of mixed type, as in Section \ref{Maurer-Cartan section}.  We wish to see how $\delta_t$ acts on these terms.

\begin{prop}\label{delta action}
Suppose that $t>0$.  For any $x \in \C^n$ and any $k$ we have the following pointwise norm comparisons for derivatives of $\epsilon$, before and after the dilation.  Let $k\geq0$.  Then
\begin{eqnarray*}
\|(\delta_t\epsilon_1)^{(k)}(tx)\|_0 &\leq& t^{2-k}\;\|\epsilon_1^{(k)}(x)\|_0 \\[3pt]
\quad \|(\delta_t\epsilon_2)^{(k)}(tx)\|_0 &\leq& \;t^{-k}\;\|\epsilon_2^{(k)}(x)\|_0 \\[3pt]
\textnormal{and}\quad \|(\delta_t\epsilon_3)^{(k)}(tx)\,\|_0 &\leq& t^{-2-k}\|\epsilon_3^{(k)}(x)\|_0.
\end{eqnarray*}
\end{prop}

\begin{proof}
Under a dilation, vectors scale with $t$ and covectors scale inversely with $t$.  Then
\begin{equation}\label{scaling 1}
(\delta_t\epsilon_1)(tx) = t^2\epsilon_1(x), \quad (\delta_t\epsilon_2)(tx) = \epsilon_2(x)
\quad\textnormal{and}\quad (\delta_t\epsilon_3)(tx) = t^{-2}\epsilon_3(x).
\end{equation}
If $x_i$ is a coordinate and $f$ a tensor, then
$$\frac{\del}{\del x_i}(\delta_t f)(tx) = \delta_t\left(\frac{\del}{\del tx_i} f\right)(tx)
= t^{-1}\delta_t\left(\frac{\del}{\del x_i} f\right)(tx)$$
This tells us that
$$\left\|(\delta_t f)^{(k+1)}(tx)\right\|_0
\leq t^{-1}\left\|\delta_t\left(f^{(k)}\right)(tx) \right\|_0$$
By induction on this inequality and then applying the formulas in \eqref{scaling 1}, the result follows.
\end{proof}

We now define the $\lambda$-transform, which is not a Courant isomorphism, but which does take generalized complex structures to generalized complex structures.

\begin{defn}
If $t>0$, let $\lambda_t : T \dsum T^* \to T \dsum T^*$ so that $\lambda_t(X,\xi) = (tX,\xi)$.  Then $\lambda_t$ also acts on generalized complex structures by mapping their eigenbundles (or by conjugating $J$).
\end{defn}

$\lambda_t$ commutes with diffeomorphisms, but it does not quite commute with Courant isomorphisms.
\begin{notn}
If $\Phi = (B,\phi)$ is a Courant isomorphism, then let $\lambda_t\cdot\Phi = (t^{-1}B,\phi)$.
\end{notn}

\begin{prop}
If $\Phi$ is a Courant isomorphism then
$$\Phi \comp \lambda_t = \lambda_t \comp (\lambda_t\cdot\Phi).$$
\end{prop}

Again we consider a deformation $\epsilon = \epsilon_1 + \epsilon_2 + \epsilon_3$ of the complex structure on $\C^n$.  $\lambda_t(L_\epsilon)$ will be another generalized complex structure.

\begin{prop}\label{lambda action}
$\lambda_t(L_\epsilon) = L_{\lambda_t\epsilon}$, where
$$\lambda_t\epsilon = t \epsilon_1 + \epsilon_2 + t^{-1} \epsilon_3.$$
\end{prop}

\begin{rem}
We can check that this transformation respects the Maurer-Cartan equations, \eqref{MC1} through \eqref{MC4}, which tells us that if $L_\epsilon$ was generalized complex then so is $\lambda_t(L_\epsilon)$.
\end{rem}

We can now prove that the Main Theorem follows from the Main Lemma.  Recall: 
\begin{main lem}
Let $J$ be a generalized complex structure on the closed unit ball $B_1$ about the origin in $\C^n$.  Suppose that $J$ is a small enough deformation of the complex structure on $B_1$, and suppose that $J$ is of complex type at the origin.  Then, in a neighbourhood of the origin, $J$ is equivalent to a deformation of the complex structure by a holomorphic Poisson structure on $\C^n$.
\end{main lem}

\begin{main thm}
Let $J$ be a generalized complex structure on a manifold $M$ which is of complex type at point $p$.  Then, in a neighbourhood of $p$, $J$ is equivalent to a generalized complex structure induced by a holomorphic Poisson structure, for some complex structure near $p$.
\end{main thm}

\begin{proof}[Proof of Main Theorem from Main Lemma]
Suppose that $J$ is a generalized complex structure on $M$, with $p$ a point of complex type.  We may assume without loss of generality that $p=0$ in the closed unit ball $B_1 \subset \C^n$, where the complex structure on $T_0 \C^n$ induced by $J$ agrees with the standard one.  By application of an appropriate $B$-transform, we may assume that, at $0$, $J$ agrees with the standard generalized complex structure, $J_{\C^n}$, for $\C^n$.  Then, near $0$, $J$ is a deformation of $J_{\C^n}$ by $\epsilon \in \Gamma(\^ ^2 L^*)$, and $\epsilon$ vanishes at $0$.

For $t>0$, let
$$R_t \epsilon = \delta_{t^{-1}}\, \lambda_{t^2}\, \epsilon = \lambda_{t^2}\, \delta_{t^{-1}}\, \epsilon.$$
Since $\epsilon$ (and hence $R_t \epsilon$) vanishes at $0$ to at least first order, there is some $C>0$ such that, for all $0 < t \leq 1$,
\begin{equation}\label{e701}
\|(R_t \epsilon) (x)\|_0 \leq C\,t\, \|(R_t \epsilon)(t^{-1}x)\|_0.
\end{equation}
For derivatives $k>0$, we apply Proposition \ref{delta action} to the components $\epsilon_1$, $\epsilon_2$ and $\epsilon_3$, and
\begin{eqnarray*}
\|(R_t \epsilon_1)^{(k)}(t^{-1}x)\|_0 &\leq& t^{-2+k}\, \|(\lambda_{t^2}\epsilon_1)^{(k)}(x)\|_k, \\
\|(R_t \epsilon_2)^{(k)}(t^{-1}x)\|_0 &\leq& \; t^k\quad \|(\lambda_{t^2}\epsilon_2)^{(k)}(x)\|_k \quad\textnormal{and} \\
\|(R_t \epsilon_3)^{(k)}(t^{-1}x)\|_0 &\leq& t^{2+k}\; \|(\lambda_{t^2}\epsilon_3)^{(k)}(x)\|_k.
\end{eqnarray*}
But according to Proposition \ref{lambda action}, the effect of $\lambda_{t^2}$ is
\begin{eqnarray*}
\|(R_t \epsilon_1)^{(k)}(t^{-1}x)\|_0 &\leq& t^k\, \|\epsilon_1^{(k)}(x)\|_k, \\
\|(R_t \epsilon_2)^{(k)}(t^{-1}x)\|_0 &\leq& \; t^k\, \|\epsilon_2^{(k)}(x)\|_k \quad\textnormal{and} \\
\|(R_t \epsilon_3)^{(k)}(t^{-1}x)\|_0 &\leq& t^k\, \|\epsilon_3^{(k)}(x)\|_k.
\end{eqnarray*}
Thus, if $k>0$ or (because of \eqref{e701}, if $k=0$ also), we have
$$\|(R_t \epsilon)^{(k)} (x)\|_0 \leq C\,t\, \|\epsilon^{(k)}(x)\|_0.$$
So,
$$\|(R_t \epsilon) (x)\|_k \leq C\,t\, \|\epsilon(x)\|_k.$$

Taking the $\sup$-norm always over the fixed set $B_1$, we have that $\|R_t \epsilon\|_k \leq C\,t\, \|R_t \epsilon\|_k$.
Thus $\|R_t \epsilon\|_k$ is as small as we like for some $t$, and satisfies the hypotheses of the Main Lemma; so there exists a local Courant isomorphism $\Phi$ such that $\Phi_t \cdot R_t \epsilon = \beta$, where $\beta$ is a holomorphic Poisson bivector.  Then
\begin{eqnarray*}
\beta &=& \Phi_t \cdot \left(\lambda_{t^2}\, \delta_{t^{-1}}\, \epsilon\right) \\
&=& \lambda_{t^2} \left((\lambda_{t^2}\cdot\Phi_t) \cdot \delta_{t^{-1}}\, \epsilon\right)
\end{eqnarray*}
But the action of $\lambda_{t^2}$ on a bivector is just scaling by $t^2$, so
$$(\lambda_{t^2}\cdot\Phi_t) \cdot \delta_{t^{-1}}\, \epsilon = t^{-2}\beta.$$
Thus, starting from a suitably small neighbourhood of $0$, by applying first the dilation $\delta_{t^{-1}}$ and then the Courant isomorphism $\lambda_{t^2}\cdot\Phi_t$, we see that $\epsilon$ is locally equivalent to the holomorphic Poisson structure $t^{-2}\beta$.
\end{proof}

\article{
\input{biblio-hol}








\chapter{Generalized complex structures on symplectic foliations}

A generalized complex structure induces a Poisson structure and, transverse to its symplectic foliation, a complex structure.  For a \emph{regular} generalized complex structure, there is no more local information than this (up to isomorphism).

In considering the relation between, on the one hand, pairs $(P,I)$ of regular Poisson structures $P$ and transverse complex structures $I$ and, on the other hand, regular generalized complex structures, two questions come naturally to mind.  First of all, given such a $(P,I)$, is it induced by a generalized complex structure?  Locally, the answer is always yes, so any obstruction must be global.  Second, if $(P,I)$ is induced by a generalized complex structure, how may we classify (up to $B$-transform) those generalized complex structures which induce $(P,I)$?  It is the first of these questions we address in this chapter.

In Section \ref{intro section}, we review the definitions and basic facts of generalized complex structures from the pure spinor viewpoint.  In Section \ref{problem statement}, we state our problem precisely, give the basic construction we will continue to use throughout the chapter, and give some simple sufficient conditions for an affirmitive answer.  In Section \ref{integrability conditions}, we study our construction in more detail, and give necessary and sufficient conditions for an affirmitive answer (see Theorem \ref{H iff alpha and beta}).

In Section \ref{smooth symplectic families}, we study these conditions in the case where the symplectic foliation of $P$ corresponds to a fibre bundle over a complex base.  We find that, as a necessary condition, the relative cohomology of the symplectic form should be \emph{pluriharmonic} under the Gauss-Manin connection (Theorem \ref{[omega] pluriharmonic}).  If a smooth symplectic family induced by a generalized complex structure is a surface bundle, or in higher dimensions if certain topological conditions are satisfied, we show that it is in fact a symplectic fibre bundle, that is, it has symplectic trivializations (Theorem \ref{symplectic}).

We find some counterexamples, for which a regular Poisson structure and transverse complex structure do not come from a generalized complex structure, as well as some unexpected examples (in particular, see Example \ref{counterexample}).

\section{Pure spinors and generalized complex structures}\label{intro section}

We briefly review the pure spinor formalism of generalized complex structures.  This is not the usual way these structures are introduced---a generalized complex structure $J$ on a manifold $M$ is usually defined as a complex structure on a \emph{Courant algebroid} over $M$ (see, eg., the introduction to Chapter 2, or Chapter 5, Definitions \ref{Courant algebroid} and \ref{abstract generalized complex} for more details)---but the data in either formalism determine each other, and in this chapter we stick to only one for the sake of brevity.  For details, and proofs of claims in this section, see \cite{Gualtieri2011}.

\begin{rem}
We may consider the complexification of every quantity in the following definitions, eg., complex pure spinors.  As a point of notation, we indicate the complexification of a real vector bundle by a subscript $\C$.  For example, if $V$ is a vector bundle over $M$, then $V_\C = \C \tens V$. In this chapter we only consider smooth sections of bundles.
\end{rem}

\subsection{Algebraic definitions}

\begin{defn}\label{define spinor}
By a \emph{spinor} on a manifold $M$ we will mean a (complex) mixed-degree differential form $\rho \in \Gamma(\^ ^\bullet T^*_\C M)$.
\end{defn}

\begin{defn}
Sections of $TM \dsum T^*M$ act on the spinors via the \emph{Clifford action}, by contraction and wedging: if $(X,\xi) \in \Gamma(TM \dsum T^*M)$ and $\rho$ is a spinor, then
$$(X,\xi)\cdot\rho = \iota_X \rho + \xi\^\rho.$$
Therefore, every spinor $\rho$ on $M$ has a \emph{null subbundle} $L_\rho \subset TM \dsum T^*M$ which is just its annihilator under the Clifford action.
\end{defn}

\begin{defn}
A spinor $\rho$ is \emph{pure} if $L_\rho$ is a \emph{maximal} isotropic subbundle with respect to the standard symmetric pairing on $TM \dsum T^*M$. 
\end{defn}
Such a maximal isotropic subbundle will have half the rank of $TM \dsum T^*M$; that is, its rank will be the dimension of $M$.

\begin{defn}
A (complex) maximal isotropic $L \subset T_\C M \dsum T^*_\C M$ has \emph{real rank zero} if its intersection with its complex conjugate is trivial, that is, $L \cap \bar{L} = 0$.  We will also say that a (complex) pure spinor $\rho$ has real rank zero if $L_\rho$ does.
\end{defn}

An almost complex structure may be given by its canonical line bundle, the top wedge power of the $(1,0)$-forms.  Analogously,
\begin{defn}\label{almost gc}
An \emph{almost generalized complex structure} $J$ on $M$ is given by a pure spinor line bundle $\kappa_J \subset \^ ^\bullet T^*_\C M$ of real rank zero, called the \emph{canonical line bundle} of $J$.

The \emph{type} of $J$ at a point $x$ is the lowest nontrivial degree of its canonical line bundle at $x$.  $J$ is \emph{regular} at $x$ if its type is constant near $x$.
\end{defn}
We understand the type of $J$ as the number of complex dimensions, as will be made clear (see Proposition \ref{pointwise form of J}).

\begin{rem}\label{formalism correspondence}
The relation between the pure spinor formalism and the definition of generalized complex structures as anti-involutions, $J:T_\C M \dsum T_\C M \to T_\C M \dsum T_\C M$, on the standard Courant algebroid, is just that the $+i$-eigenbundle of $J$ is identified with the null subbundle, $L_\rho$, of the pure spinor bundle.  The conditions on integrability (Definition \ref{define integrability}) will be equivalent \cite{Gualtieri2011}.
\end{rem}

\begin{defn}\label{define transverse complex}
Let $S \subset TM$ be a distribution on $M$.  Then an \emph{almost complex structure transverse to $S$} is an almost complex structure on $NS = TM/S$. 

A transverse almost complex structure $I$ gives a decomposition of $N^*_\C S$ into $+i$ and $-i$--eigenbundles, $N^*_{1,0} S$ and $N^*_{0,1} S$ respectively.  The \emph{canonical line bundle} of $I$ is
$$\kappa_I = \^ ^k N^*_{1,0} S,$$
where $k = \dim_\C(N_{1,0} S)$.

A transverse almost complex structure is \emph{integrable} if $S \dsum N_{1,0}$ is Lie-involutive.
\end{defn}

\begin{defn}\label{define exponential}
We define the spinor exponential with the usual Taylor series: let $B$ be a form of even degree.  Then
$$e^B = 1 + B + \frac{1}{2} B \^ B + \ldots,$$
where in this case the wedge products eventually vanish and the series is finite.
\end{defn}
Note that, since wedge product is symmetric for forms of even degree, this exponential is actually a homomorphism from addition to wedge product.

\begin{prop}\label{pointwise form of J}
If $J$ is an almost generalized complex structure, then \emph{at a point} its canonical line bundle is of the form
\begin{equation}
\kappa_J = e^{B+i\omega} \^ \kappa_I
\end{equation}
for some almost complex structure $I$ transverse to a (possibly singular) distribution $S$, and real 2-forms $B$ and $\omega$, where the pullback of $\omega$ to $S$ is nondegenerate.  If $J$ is \emph{regular}, then such a representation exists in a neighbourhood of any point.
\end{prop}

\begin{rem}\label{S and I well-defined}
$\kappa_I$ is the lowest-degree component of $\kappa_J$; hence, the distribution $S = \Ann(\kappa_I)$ and the transverse almost complex structure $I$ are uniquely determined by $Ja$.  However, $B + i\omega$ is not.  Rather, $B+i\omega$ is well-defined up to $N^*_{1,0} \^ T^*_\C M$; that is, $B+i\omega$ is a well-defined section of $\^ ^2 \left(S_\C \dsum N_{0,1}\right)^*$.
\end{rem}

In generalized geometry, the symmetry group of a manifold is taken to be larger than just the diffeomorphisms.  In addition, it includes the following:
\begin{defn}
If $B$ is a real 2-form and $J$ is an almost generalized complex structure with canonical line bundle $\kappa_J$, then the \emph{$B$-field transform} (or just \emph{$B$-transform}) of $J$ is written $B\cdot J$, and may be defined in terms of its action on $\kappa_J$:
$$\kappa_{B\cdot J} = e^B \^ \kappa_J.$$
\end{defn}
We distinguish between closed $B$-transforms and non-closed $B$-transforms, since when the 2-form $B$ is non-closed, the integrability condition changes (see Proposition \ref{H+dB}).

\subsection{Integrability of generalized complex structures}

\begin{defn}\label{define integrability}
If $H$ is a closed real 3-form and $\rho$ is a spinor, then
$$d_H \rho := d\rho + H\^\rho.$$
We say that a pure spinor $\rho$ is \emph{$H$-integrable} if
$$d_H \rho = (X,\xi)\cdot\rho$$
for some $(X,\xi) \in T_\C M \dsum T^*_\C M$.

We say that an almost generalized complex structure $J$ is $H$-integrable, or, alternatively, that $J$ is a generalized complex structure with curvature $H$, if near any point the canonical line bundle $\kappa_J$ of $J$ has an $H$-integrable local generating section.
\end{defn}
\begin{rem}
If such a cover of $H$-integrable sections of $\kappa_J$ exists, then in fact \emph{all} local sections of $\kappa_J$ will be $H$-integrable.

Furthermore, if $J$ is $H$-integrable and is regular at $x$, then in fact there is a $d_H$-closed local generating section of $\kappa_J$ near $x$.
\end{rem}

\begin{prop}\label{H+dB}
If $J$ is an $H$-integrable generalized complex structure for some closed 3-form $H$, and $B$ is a 2-form, then $B\cdot J$ is an $(H + dB)$-integrable generalized complex structure. 
\end{prop}

\begin{prop}\label{J induces P and I}
Let $J$ be a generalized complex structure integrable with respect to some closed 3-form.  As per Remark \ref{S and I well-defined}, let $S$ be the (possibly singular) distribution determined by $J$, let $I$ be the transverse almost complex structure, and let $B + i\omega$ be the 2-form on $S \dsum N_{0,1}S$.

Then $S$ integrates to a foliation, $\omega$ pulls back to this foliation to give the symplectic leaves of a Poisson structure, and $I$ is integrable.
\end{prop}

Thus, a generalized complex structure determines a Poisson structure and a transverse complex structure.  The following local normal form theorem says that this is the only local information. 

\begin{thm}[Gualtieri, \cite{Gualtieri2011}]\label{regular local normal form}
If $J$ is an (integrable) generalized complex structure regular at $x$, then there is a neighbourhood of $x$ which is isomorphic---via diffeomorphism and $B$-transform---to a neighbourhood in the following generalized complex manifold:

Let $\kappa_I = \^ ^k T_{1,0} \C^k$ be the canonical line bundle of $\C ^k$ for some $k$, and let $\omega$ be the standard symplectic form on $\R^{2m}$ for some $m$.  Then the line bundle,
$$\kappa = e^{i\omega} \^ \kappa_I,$$
is the canonical bundle for a generalized complex structure with curvature $H=0$ on $\C^k \times \R^{2m}$.
\end{thm}
In other words, near a regular point, any generalized complex structure is equivalent to the product of a complex structure with a symplectic structure.

\section{Problem statement and non-integrable solution}\label{problem statement}

In what follows, suppose that $P$ is a \emph{regular} Poisson structure on $M$ with symplectic foliation $F$, and that $I$ is a transverse complex structure to $F$.  Our question is:
\begin{itemize}
\item When does the pair $(P,I)$ come from a generalized complex structure on $M$, as in Proposition \ref{J induces P and I}?
\end{itemize}
This is always the case locally, as a corollary to Theorem \ref{regular local normal form}, so any obstruction must be global.  The global answer is ``not always'' (even though $P$ and $I$ are integrable).  We will provide counterexamples.

However, we can always find \emph{almost} generalized complex structures inducing $(P,I)$.  We give a construction, which will then be the basis of our solution of the original question.

\begin{notn}
We may understand the Poisson structure as a map $P:T^*M \to TM$.  Then, in what follows, let $S = \im(P) = TF$, where $F$ is the symplectic foliation.  $P$ determines a leafwise symplectic form $\omega \in \Gamma(\^ ^2 S^*)$.
\end{notn}

$\omega$ is leafwise-closed, and leafwise nondegenerate.  The complexification of the normal bundle, $N = TM/S$, splits as $N_\C = N_{1,0} \dsum N_{0,1}$ according to the transverse complex structure $I$.  The integrability condition on $I$ is just that the bundle $N_{0,1} \dsum S_\C$ is Lie-involutive in $T_\C M$.

\begin{defn}\label{definition from N}
If $N \subset TM$ is a smooth distribution complementary to $S$ (using the same notation as for the normal bundle), then we may extend $\omega$ to $M$ by specifying $\ker(\omega) = N$.

Then the almost generalized complex structure, $J_N$, induced by $(P,I)$ and the choice of $N$, is defined by the canonical line bundle
\begin{equation}\label{defining spinor}
\kappa = e^{i\omega} \^ (\^ ^k N_{1,0}^*) = e^{i\omega} \^ \kappa_I
\end{equation}
\end{defn}

\begin{prop}\label{construction is general}
Let $J$ be an almost generalized complex structure inducing $(P,I)$.  Then for any choice of complementary distribution $N \subset TM$, $J$ is equivalent, up to a $B$-transform, to the structure $J_N$.
\end{prop}
\begin{proof}
As per Proposition \ref{pointwise form of J}, the canonical line bundle of $J$ is
$$\kappa_J = e^{B+i\omega} \^ \kappa_I,$$
where $B+i\omega$ is a section of $\^ ^2 \left(S_\C \dsum N_{0,1}\right)^*$.  By choosing $N \subset TM$ and specifying $B+i\omega \in \Ann(N_{1,0})$, we extend $B+i\omega$ to a section of $\^ ^2 T^*_\C M$.  $\omega$ extended in this way will be the same $\omega$ as in Definition \ref{definition from N} above.  Then
$$e^{-B} \^ \kappa_J = e^{i\omega} \^ \kappa_I.$$
\end{proof}

\subsection{Example}

Using Definition \ref{definition from N}, we can answer our question in the affirmitive in some special cases.  The following was originally observed by Cavalcanti \cite{Cavalcanti}:

\begin{prop}
If the leafwise-closed symplectic form $\omega$ extends to a closed form $\tilde\omega$ on $M$, then $(P,I)$ comes from a generalized complex structure, integrable with curvature $H=0$.
\end{prop}
\begin{proof}
The generalized complex structure is defined by the canonical line bundle
$$e^{i\tilde\omega} \^ \kappa_I,$$
which admits closed local sections.
\end{proof}

\begin{cor}\label{complementary foliation}
If $S$ admits a complementary foliation $R$, for which $\omega$ is constant in the directions of $R$, then $(P,I)$ comes from a generalized complex structure.
\end{cor}
\begin{proof}
We embed the normal bundle as $N = TR \subset TM$, using the same symbol $N$.  As in Definition \ref{definition from N}, we extend $\omega$ to $M$ by specifying that $\ker(\omega) = N$.  Since $\omega$ is constant along the directions of $R$, and $\omega$ is closed on $S$, then on the total space $d\omega=0$.  (We can see this by expressing a neighbourhood as a product decomposition for $S$ and $R$.)
\end{proof}

\section{Integrability---the general case}\label{integrability conditions}

If $J$ is a generalized complex structure inducing $(P,I)$, then for \emph{any} choice of $N$ complementary to $S$, $J$ is equivalent via a $B$-transform to the almost generalized complex structure $J_N$ in definition \ref{definition from N}.  But then $J_N$ is integrable---if $J$ was $H'$--integrable and $B\cdot J = J_N$, then $J_N$ is $H$--integrable for $H=H'+dB$.  Contrapositively, if $J_N$ is not integrable, then there is no generalized complex structure $J$ inducing $(P,I)$.

Therefore, we answer the question of whether $(P,I)$ comes from a generalized complex structure by choosing any complementary $N$, and then testing to see if $J_N$ is integrable for some closed 3-form $H$.

Fix the choice $N \subset TM$ complementary to $S$.  Suppose that $H$ is a closed 3-form such that $J_N$ is $H$-integrable.  We will study two types of conditions on $H$---the $H$-integrability of $J_N$, and the closedness of $H$.

\subsection{Trigrading}\label{trigrading}
The decomposition $TM = N_{1,0} \dsum N_{0,1} \dsum S$ gives us a trigrading on forms. We write the components of $H$ as $H^{ij,k}$, where $i$ and $j$ are the degrees in the Dolbeault complex of $N$, and $k$ is the degree on $S$.

We decompose the exterior derivative according to the grading.  If the distributions $N_{1,0}$, $N_{0,1}$ and $S$ were each integrable, then $d$ would decompose as $\del + \bar\del + d_S$, where each term increases by one the respective degree in the trigrading.  The terms would anticommute, and we would have a triple complex.  However, since the distributions may not be integrable, there may be additional terms.

\begin{lem}\label{d decomposition}
$d$ decomposes as
\begin{equation}\label{d formula}
d = \underset{10,0}{\del}
+ \underset{01,0}{\bar\del}
+ \underset{20,-1}{\theta}
+ \underset{02,-1}{\bar\theta}
+ \underset{00,1}{d_S}
\end{equation}
Under each term we indicate the degree of the operator.  Furthermore, $\theta$ and $\bar\theta$  are tensors in \mbox{$\Gamma(\^ ^2 N^* \tens S)$,} acting on forms by contracting in $S$ and wedging in $N^*$.
\end{lem}

\begin{proof}
We don't give details, since results similar to this are found in the literature.  See, for example, a real counterpart in \cite[Proposition 10.1]{BGV1992}; $d$ decomposes as
$$d = d_N + \Theta + d_S,$$
where $d_N$ has degree $+1$ in $N^*$, and $\Theta$ acts as a tensor with degree $+2$ in $N^*$ and $-1$ in $S^*$.  Similarly, pulled back to the integrable distributions $N_{1,0} \dsum S$ and $N_{0,1} \dsum S$, $d$ becomes $\del + \theta + d_S$ and $\bar\del + \theta + d_S$ respectively, and then $d_N=\del+\bar\del$ and $\Theta = \theta + \bar\theta$.
\end{proof}

\begin{rem}
It is not the case that each term in the decomposition of $d$ squares to zero; but, of course, $d^2=0$, and by decomposing this equation according to degree we may find second-order relations between terms.
\end{rem}

\subsection{$H$-integrability}

\begin{prop}
Let $H$ be a real closed 3-form.  Then $J_N$, as in Definition \ref{definition from N}, is $H$-integrable if and only if the following equations hold:
\begin{eqnarray}
H^{00,3} &=& 0 \label{H1} \\
H^{01,2} &=& -i\bar\del\omega \label{H2} \\
H^{02,1} &=& -i\bar\theta\omega \label{H3} \\
H^{03,0} &=& 0 \label{H4}
\end{eqnarray}
\end{prop}

\begin{proof}
The $H$-integrability of $J_N$ says that, for a local closed generating section $\Omega \in \Gamma(\kappa_I)$, we should have
\begin{eqnarray*}
0 &=& d_H (\Omega \^ e^{i\omega}) \\
&=& \Omega \^ (d_H e^{i\omega}) \\
&=& \Omega \^ (d + H\^) e^{i\omega} \\
\Leftrightarrow 0 &=& \Omega \^ (d i\omega + H)
\end{eqnarray*}
That is, for all $j$ and $k$,
\begin{equation}\label{constraint on H}
-i(d \omega)^{0j,k} = H^{0j,k}.
\end{equation}

Equations \eqref{H2}, \eqref{H3} and \eqref{H4} are just selected degrees of this condition, according to the trigrading.  The $(00,3)$-degree component of this condition is $H^{00,3} = -i d_S \omega$; however, we supposed that $\omega$ was leafwise-closed, i.e., $d_S\omega=0$, and so we get Equation \eqref{H1}.

Since $H$ is real, we have that $H^{ij,k} = \Bar{H^{ji,k}}$, and so these equations also determine $H^{10,2}$, $H^{20,1}$ and $H^{30,0}$.
\end{proof}

The effect of these conditions is that certain degrees of $H$ are completely prescribed by $(P,I)$ and the choice of $N$.  These prescribed terms are not in themselves an obstruction to the existence of an appropriate $H$.  Given these conditions, there remain two free terms, $H^{11,1}$ and $H^{21,0}$.  ($H^{12,0}$ must be conjugate to $H^{21,0}$.)

\subsection{Closedness of $H$}

\begin{thm}\label{H iff alpha and beta}
Suppose that $P$ and $I$ are a regular Poisson structure and a transverse complex structure respectively.  Let $N$ be a choice of complementary distribution to the symplectic distribution $S$, let $\omega$ be the extended symplectic form (as in Definition \ref{definition from N}), and let $d = \del + \bar\del + \theta + \bar\theta + d_S$ be the decomposition as in Lemma \ref{d decomposition}.

Then $(P,I)$ comes from a generalized complex structure if and only if there exist forms
$$\alpha \in \Omega^{11,1} \;\textnormal{and}\; \beta \in \Omega^{21,0}$$
(where $\alpha$ is real) such that the following hold:
\begin{eqnarray}
0 &=& \del\beta + \theta\alpha   \label{dH1} \\
0 &=& \del\bar\beta + \bar\del\beta + 2i\bar\theta\theta\omega   \label{dH2} \\
0 &=& \del\alpha + 2i\bar\del\theta\omega + d_S\beta   \label{dH3} \\
0 &=& -2i\del\bar\del\omega + d_S\alpha   \label{dH4}
\end{eqnarray}
\end{thm}

\begin{proof}
We take the almost generalized complex structure $J_N$ as in Definition \ref{definition from N}, and try to find a closed 3-form $H$ integrating it.  This succeeds if and only if $(P,I)$ comes from a generalized complex structure.  Such an $H$, if it exists, is determined by Equations \eqref{H1} through \eqref{H4}, the free terms $\alpha = H^{11,1}$ and $\beta = H^{21,0}$, and the reality conidition on $H$.

We will decompose the condition $dH=0$ by degree in the trigrading.  We only look at terms which involve $\alpha$ or $\beta$, and we consider only one term from each conjugate pair.  Furthermore, we make the substitutions in equations \eqref{H1} through \eqref{H4} and their conjugates.  Then
\begin{eqnarray*}
(dH)^{31,0} &=& \del H^{21,0} + \bar\del H^{30,0} + \theta H^{11.1} \\
0 &=& \del\beta + \theta\alpha \\ \\
(dH)^{22,0} &=& \del H^{12,0} + \bar\del H^{21,0} + \theta H^{02,1} + \bar\theta H^{20,1} \\  
0 &=& \del\bar\beta + \bar\del\beta - i\theta\bar\theta\omega + i\bar\theta\theta\omega \\ \\
(dH)^{21,1} &=& \del H^{11,1} + \bar\del H^{20,1} + \theta H^{01,2} + d_S H^{21,0} \\  
0 &=& \del\alpha + i\bar\del\theta\omega - i \theta\bar\del\omega + d_S\beta \\ \\
(dH)^{11,2} &=& \del H^{01,2} + \bar\del H^{10,2} + d_S H^{11,1} \\
0 &=& -i\del\bar\del\omega + i\bar\del\del\omega + d_S\alpha
\end{eqnarray*}
These resemble Equations \eqref{dH1} through \eqref{dH4}, except that in the statement of the theorem we have used the following anticommutation relations:
$$\theta\bar\theta + \bar\theta\theta = 0,  \;\;\;  \bar\del\theta + \theta\bar\del = 0  \;\;\textnormal{and}\;\;
\del\bar\del + \bar\del\del = 0$$
(These relations follow from grouping terms in $d^2 = 0$ by degree.)

The other terms, which do not feature $\alpha$ or $\beta$, automatically vanish, and thus provide no constraint.  (If they didn't vanish, then there could be no possible solution for $H$, \emph{even locally}, and as we remarked earlier, this problem is always locally solvable.)
\end{proof}

\begin{rem}\label{type 1}
Note that if the type---that is, the complex dimension of $N_{1,0}$---is 1, then equations \eqref{dH1}, \eqref{dH2} and \eqref{dH3} are trivially satisfied.
\end{rem}

\section{Smooth symplectic families}\label{smooth symplectic families}

\begin{defn}\label{define smooth symplectic family}
A smooth symplectic family over a complex manifold $B$ is a fibre bundle $\pi:X \to B$ with a Poisson structure whose symplectic leaves coincide with the fibres.  By pullback from $B$, it inherits a complex structure transverse to the symplectic foliation.

We say that a smooth symplectic family over a complex manifold is \emph{generalized complex} if its Poisson structure and transverse complex structure are induced by a generalized complex structure.
\end{defn}

\begin{rem}
A smooth symplectic family need not be a \emph{symplectic fibre bundle}, since it may not have symplectic local trivializations.
\end{rem}

\subsection{Surface bundles}\label{surface bundles}
We first consider smooth symplectic families with compact, 2-dimensional fibres over a complex base.  We show that the symplectic volume of the fibres, as a function on the base, is pluriharmonic.  Furthermore, if the base is compact and connected, then the symplectic family is in fact a symplectic fibre bundle.

\begin{prop}\label{V pluriharmonic}
Let $\pi:X \to B$ be a smooth symplectic family with compact, 2-dimensional fibres over a complex manifold $B$, with Poisson structure $P$ and transverse complex structure $I$.

If $(P,I)$ comes from a generalized complex structure on $X$, then the function $V : B \to \R$ giving the symplectic volume of each fibre must be pluriharmonic, i.e., $\del\bar\del V = 0$.  In particular, if $B$ is compact then $V$ is locally constant.
\end{prop}

\begin{proof}
Suppose that $(P,I)$ comes from a generalized complex structure $J$.  Pick a point $p \in B$.  Over a neighbourhood $U \ni p$, we may trivialize $\pi^{-1}(U)$ as a surface bundle, and thus choose a horizontal distribution, $N\subset TX$, which is integrable.  Thus we get the decomposition of $d$ as in Lemma \ref{d decomposition}.

We then construct the almost generalized complex structure $J_N$ on $\pi^{-1}(U)$ as in Definition \ref{definition from N}.  Since $J$ is integrable, so must be $J_N$ for some curvature $H$. Then there must exist $\alpha = H^{11,1}$ and $\beta = H^{21,0}$ which satisfy equations \eqref{dH1} through \eqref{dH4}.  In particular, if $\omega$ is the leafwise symplectic form (extended to $X$ by the choice of $N$), then
$$d_S\alpha = 2i\del\bar\del\omega.$$

The Poisson structure determines an orientation on the fibres, and since the fibres are compact surfaces, we may integrate this equation over the fibres---we denote this integration by $\int_F$.  Since $d_S\alpha$ is $d_S$-exact and $F$ is compact, by Stokes' theorem $\int_F d_S = 0$.  Furthermore, $\del\bar\del$ commutes with the integral; so we have
\begin{eqnarray*}
\int_F d_S\alpha &=& \int_F 2i\del\bar\del\omega \\
0 &=& 2i \del\bar\del \int_F \omega \\ 
0 &=& 2i \del\bar\del V 
\end{eqnarray*}
thus proving the claim.

Finally, if $B$ is compact, then, by the maximum principle, the condition $\del\bar\del V = 0$ implies that $V$ must be locally constant.
\end{proof}

This provides a source of counterexamples:
\begin{cor}
Let $X \to B$ be a compact symplectic surface bundle over a compact complex manifold $B$, with Poisson structure $P$ coming from the symplectic fibration.  Let $I$ be the transverse complex structure pulled back from $B$.

If $V : B \to \R$ is a smooth, positive, not-locally-constant real function, then the product $\tilde{P}=VP$ is a Poisson structure on $X$, and the pair $\tilde{P}$ and $I$ does not come from a generalized complex structure on $X$.
\end{cor}

If the base is compact and connected, then we can strengthen Proposition \ref{V pluriharmonic} as follows.
\begin{prop}\label{symplectic bundle}
Let $\pi:X \to B$ be a smooth symplectic family, with compact, 2-dimensional fibres, over a \emph{compact, connected} complex manifold $B$, let $P$ be its Poisson structure and let $I$ be its transverse complex structure.

If $(P,I)$ comes from a generalized complex structure, then $\pi:X \to B$ is a \emph{symplectic} fibre bundle.
\end{prop}

\begin{proof}
We prove the existence of local \emph{symplectic} trivializations.  Our technique is to check that in this case Moser's trick (see, eg., \cite{CannasDaSilva}) works in a smooth fibrewise way.

Let $p$ be any point in $B$.  Let $S = \pi^{-1}(p)$ be the fibre over $p$.  Since $X$ is locally trivializable, there exists a neighbourhood $U$ of $p$ such that $\pi^{-1}(U) \iso U \times S$ as smooth fibre bundles.

Given this identification, we may consider two fibrewise symplectic forms: $\omega$, which comes from the Poisson structure $P$, and $\omega_0$, which is constructed by taking $\omega|_S$ and copying it to every fibre in $\pi^{-1}(U)$ according to the local trivialization.  (Then the local trivialization is symplectic for $\omega_0$.)

Let $\omega_t = (1-t)\omega_0 + t\omega$ be an interpolation between $\omega$ and $\omega_0$.  Then
$$\frac{d\omega_t}{dt} = \omega - \omega_0.$$

The conclusions of Proposition \ref{V pluriharmonic} hold.  In particular, since $B$ is compact and connected, $V$ is constant.  But for a given fibre $F$, the classes $[\omega|_F]$ and $[\omega_0|_F]$ in $H^2(F)$ are determined by $V(F)$ and $V(S)$ respectively.  Thus $[\omega|_F]=[\omega_0|_F]$, and there is some 1-form $\mu|_F$ on $F$ such that $\omega|_F - \omega_0|_F = d\mu|_F$.

Since the fibres are compact, and since $\omega$ and $\omega_0$ vary smoothly, we may choose fibrewise $\mu$ smoothly with respect to the fibres (for example, by picking the harmonic representative according to some metric).  Then, as fibrewise forms, we have
$$\frac{d\omega_t}{dt} = \omega - \omega_0 = d\mu$$

On the distinguished fibre $S$, $\omega_t|_S$ is a constant family of symplectic forms.  By possibly taking a smaller neighbourhood around $p$, we can ensure that $\omega_t$ is fibrewise-nondegenerate for all fibres and all $t \in [0,1]$.

Since $\omega_t$ is fibrewise-nondegenerate, we may solve
$$\iota_{v_t} \omega_t = \mu$$
for a smooth vertical vector field $v_t$.  Then
$$\Lie_{v_t} \omega_t = d\iota_{v_t} \omega_t = d\mu = \frac{d\omega_t}{dt}.$$
Since the fibres are compact, we may integrate $v_t$ to a fibre-preserving flow taking $\omega$ to $\omega_0$, which is the symplectic trivialization.
\end{proof}

\begin{example}
We would like to give examples of generalized complex structures on surface bundles for which the fibrewise volume function is non-constant.  In particular, these would not be symplectic fibre bundles.  According to Proposition \ref{V pluriharmonic}, such examples would have a non-compact base, and the fibrewise volume would be pluriharmonic.  We define them as follows:

Let $\tilde{X} \to B$ be a symplectic fibre bundle with flat connection over a noncompact Riemann surface, and suppose that $V : B \to \R$ is a nonconstant, positive pluriharmonic function (for example, a real linear function on the upper half plane $\sr H^+ \subset \C$).  Let $X$ be the same bundle as $\tilde{X}$ but with the symplectic form on each fibre scaled by $V$---that is, if $\omega$ is the symplectic form on the fibres of $\tilde{X}$, then $V\omega$ is the symplectic form on the fibres of $X$.

Then $V\omega$ determines a Poisson structure, $P$, on $X$ and, as usual, the complex structure on $B$ pulls back to a transverse complex structure $I$ on $X$.

\begin{prop}
In this case, $(P,I)$ comes from a generalized complex structure.
\end{prop}
\begin{proof}
The flat connection on $\tilde{X}$ determines an integrable distribution, $N$, complementary to the fibres, which determinines the decomposition of $d$, as in Lemma \ref{d decomposition}.  Since $N$ is integrable, we have a triple complex, and the tensor terms $\theta$ and $\bar\theta$ vanish.  The flat connection is given by $\del+\bar\del$, and since the connection is symplectic for $\omega$, we have that $(\del + \bar\del) \omega = 0$.  This splits according to degree as $\del\omega=0$ and $\bar\del\omega=0$.  Then
\begin{eqnarray*}
\del\bar\del (V\omega) &=& (\del\bar\del V) \omega + (\del V)(\bar\del\omega) - (\bar\del V)(\del\omega) + V(\del\bar\del\omega) \\
&=& 0 + 0 - 0 + 0
\end{eqnarray*}
We see that in this case equation \eqref{dH4} is satisfied:
$$d_S\alpha = \del\bar\del (V\omega) = 0.$$
As we remarked earlier, since the number of complex dimensions of $X$ is 1, the other closedness conditions are trivial, and the choices $\alpha=0$ and $\beta=0$ satisfy the conditions of Theorem \ref{H iff alpha and beta}.
\end{proof}
\end{example}

\subsection{Higher-dimensional smooth symplectic families}

We now consider smooth symplectic families whose fibres might have dimension greater than 2.  In this case, we cannot integrate equation \eqref{dH4} over the fibres, as we did in Proposition \ref{V pluriharmonic}, so the symplectic volume of the fibres has no obvious relation to the form $\alpha$.  The solution to this difficulty is to consider \eqref{dH4} as a condition on the fibrewise cohomology class of the symplectic form.  Corresponding to any fibre bundle, there is a canonical flat connection on the relative, i.e., fibrewise cohomology, called the Gauss-Manin connection.

We will show that, if a smooth symplectic family over a complex manifold comes from a generalized complex structure, then the cohomology of the symplectic form must be \emph{pluriharmonic} with respect to this connection.  If, furthermore, the family is compact and satisfies certain triviality conditions, then it is a symplectic fibre bundle.  The triviality conditions are strong, however, and we give an example of a compact smooth symplectic family over a complex manifold which comes from a generalized complex structure but whose symplectic form is not flat in cohomology.

We describe the Gauss-Manin connection first in the real case for clarity, and then note how it may be complexified.  For details, and a somewhat more general presentation, see \cite{KatzOda}.

\subsection*{Relative cohomology}
There is a natural filtration on the forms on $\pi:X \to B$,
$$\Omega^\bullet(X) = F_0^\bullet \supset F_1^\bullet \supset F_2^\bullet \supset \ldots,$$
where $F_n^m$ consists of the $m$-forms on $X$ generated (over $\Omega^\bullet(X)$) by pullbacks of $n$-forms on $B$.  Let $\Lambda^{n,k}(X) = F_n^{n+k}/F_{n+1}^{n+k}$ be the associated graded object.

Let $S \subset TX$ be the vertical distribution; by $\Omega^k(S)$ we mean the sections of $\^ ^k S^*$.  Then there is a canonical isomorphism,
$$\Lambda^{n,k}(X) = \pi^*\Omega^n(B) \tens \Omega^k(S).$$
Each $\Lambda^{n,\bullet}(X)$ is a differential complex, for the fibrewise differential $d_S = 1 \tens d$, and thus a cohomology,
$$H^{n,k}_{d_S}(X) := \frac{\{\sigma \in \Lambda^{n,k}(X) \;|\; d_S \sigma = 0\}}{d_S\Lambda^{n,k-1}(X)}.$$

\begin{prop}
$H^{n,k}_{d_S}(X)$ is naturally isomorphic to the sections of a finite-dimensional vector bundle over $B$, whose fibre over $x \in B$ is
$$\left(\^ ^n T_x^*B\right) \tens H^k_{dR}\left(\pi^{-1}(x)\right).$$
\end{prop}

We give the correspondence, with proof omitted: a section over $B$ of $(\^ ^n T^*B) \tens H^k_{dR}(S)$ has representatives in each fibre.  These may be chosen smoothly, giving an element of $\Lambda^{n,k}(X)$.  Conversely, given $[\rho]_{d_S} \in H^{n,k}_{d_S}(X)$, let $\rho \in \Lambda^{n,k}(X)$ be a representative, and produce a section of $(\^ ^n T^*B) \tens H^k_{dR}(S)$ by taking, in each fibre, the $d_S$-cohomology of $\rho$.

\begin{rem}
Since $\Lambda^{n+1,k-1}(X) = F^{n+k}_{n+1}/F^{n+k}_{n+2}$ and $\Lambda^{n,k}(X) = F^{n+k}_n/F^{n+k}_{n+1}$, we get a short exact sequence of complexes,
$$0 \to \Lambda^{n+1,k-1}(X) \to F^{n+k}_n/F^{n+k}_{n+2} \to \Lambda^{n,k}(X) \to 0,$$
giving rise to a long exact sequence in cohomology.
\end{rem}

\begin{defn}
The \emph{Gauss-Manin connection},
$$\nabla : H^{n,k}_{d_S}(X) \to H^{n+1,k}_{d_S}(X),$$
is the connecting homomorphism arising from the short exact sequence
$$0 \to \Lambda^{n+1,k-1}(X) \to F^{n+k}_n/F^{n+k}_{n+2} \to \Lambda^{n,k}(X) \to 0.$$
\end{defn}

\begin{prop}
The Gauss-Manin connection is flat.
\end{prop}

\subsection*{Computing the connection}

If we choose a distribution $N \subset TM$ complementary to $S$, then we have $\Lambda^{n,k}(X) \iso \Omega^{n,k}(X)$, where the bidegree $n$ is the degree on $N$ and $k$ the degree on $S$.  As in Lemma \ref{d decomposition}, we get a decomposition $d = d_S + \del + \bar\del + \theta + \bar\theta$.  This $d_S$ on $\Omega^{n,\bullet}$ agrees with the $d_S$ defined on $\Lambda^{n,\bullet}(X)$ under the isomorphism.  Under this isomorphism, then, the Gauss-Manin connection is just $\nabla = \del + \bar\del$, passing to $d_S$-cohomology.

We may complexify the above story, giving a decomposition
$$\C \tens H^{n,k}_{d_S}(X) = \bigoplus_{i+j=n} H^{ij,k}_{d_S}(X),$$
where $i$ and $j$ are the holomorphic and anti-holomorphic degrees on $B$.  With $\del$ and $\bar\del$ again passing to cohomology,
\begin{eqnarray*}
\del &:& H^{ij,k}_{d_S}(X) \to H^{(i+1)j,k}_{d_S}(X) \\
\textnormal{and}\quad \bar\del &:& H^{ij,k}_{d_S}(X) \to H^{i(j+1),k}_{d_S}(X)
\end{eqnarray*}
are the holomorphic and anti-holomorphic parts of $\nabla$.

\subsection*{Conditions on symplectic families}
We now use the Gauss-Manin connection to state and prove higher-dimensional analogues of the results in Section \ref{surface bundles}.

\begin{thm}\label{[omega] pluriharmonic}
Let $\pi : X \to B$ be a smooth symplectic family over a complex manifold with Poisson structure $P$ and transverse complex structure $I$.  Let $\omega$ be its leafwise symplectic structure.

If $(P,I)$ comes from a generalized complex structure on $X$, then
\begin{equation}\label{dH4 cohomology}
\del\bar\del[\omega]_{d_S} = 0.
\end{equation}

In fact, $\del\bar\del[\omega]_{d_S} = 0$ if and only if there exists $\alpha$ such that $d_S\alpha = 2i\del\bar\del\omega$.
\end{thm}

\begin{proof}
In interpreting \eqref{dH4 cohomology}, we consider
$$\omega \in \Lambda^{00,2}(X) = 1 \tens \Omega^2(S),$$
with notation as in the previous section.  Since $d_S\omega=0$, we may reinterpret equation \eqref{dH4},
$$d_S\alpha = 2i\del\bar\del\omega,$$
in cohomology:
\begin{eqnarray*}
[d_S\alpha]_{d_S} &=& [2i\del\bar\del\omega]_{d_S} \\
0 &=& 2i\del\bar\del[\omega]_{d_S}.
\end{eqnarray*}
\end{proof}

\begin{cor}
If $\pi:X \to B$ is a symplectic fibre bundle with compact fibres over a Riemann surface $B$, then the induced Poisson structure and transverse complex structure on $X$ come from a generalized complex structure.
\end{cor}
\begin{proof}
By the existence of local symplectic trivializations of $X$, we see that $\del\bar\del[\omega]_{d_S}=0$.  Then by the second claim of Theorem \ref{[omega] pluriharmonic}, there is some $\alpha$ satisfying equation \eqref{dH4}.  But since $B$ has complex dimension $1$, as per Remark \ref{type 1} equations \eqref{dH1} through \eqref{dH3} are trivially satisfied.
\end{proof}

\begin{rem}
We call the condition \eqref{dH4 cohomology} \emph{pluriharmonicity}, in analogy with the condition on the volume function in Proposition \ref{V pluriharmonic}.  Let $\pi^{-1}(U) \iso F \times U$ be a local trivialization of $\pi:X \to B$.  Let $\{\sigma_1,\ldots,\sigma_k\}$ be a basis for $H^2_{dR}(F)$, the second cohomology of the fibre.  If $\omega \in \Lambda^{00,2}(\pi^{-1}(U))$, then
$$[\omega|_{X_U}]_{d_S} = f_1\sigma_1 + \ldots + f_k\sigma_k$$
for some functions $f_1,\ldots,f_k$.

In this notation, the pluriharmonicity condition is just
$$\del\bar\del f_i=0,\;\forall i.$$
\end{rem}

\begin{prop}\label{omega flat on trivial bundle}
Let $\pi:X \to B$ be a smooth symplectic family over a \emph{compact} complex manifold, with Poisson structure $P$, complex structure $I$, and fibrewise symplectic structure $\omega$.  Furthermore, suppose $H^{00,2}_{d_S}(X)$ has a flat trivialization over $B$.

If $(P,I)$ comes from a generalized complex structure, then $[\omega]_{d_S}$ is flat.
\end{prop}
\begin{proof}
In the flat trivialization, $[\omega]_{d_S} = f_1\sigma_1 + \ldots + f_k\sigma_k$, for flat basis sections $\sigma_i$ and functions $f_i$.  By Theorem \ref{[omega] pluriharmonic} and the above remark, each $f_i$ is pluriharmonic.  Since $B$ is compact, by the maximum principle this means that each $f_i$ is locally constant.
\end{proof}

\begin{lem}\label{symplectic bundle 2}
Let $\pi:X \to B$ be a smooth symplectic family over a connected complex manifold, with fibrewise symplectic form $\omega$.

If $[\omega]_{d_S}$ is flat, then $\pi:X \to B$ is a \emph{symplectic} fibre bundle for the symplectic structure $\omega$. 
\end{lem}
\begin{proof}
The argument is almost the same as for Proposition \ref{symplectic bundle}; in that case, it was necessary that $\omega$ have the same cohomology class in nearby fibres in a local trivialization---a fact which in this case follows directly from the hypothesis that $[\omega]_{d_S}$ is flat.
\end{proof}

We summarize the situation for two particular cases where the hypotheses of Propositions \ref{omega flat on trivial bundle} and \ref{symplectic bundle 2} hold:
\begin{thm}\label{symplectic}
Let $\pi: X \to B$ be a smooth symplectic family over a compact complex manifold, which comes from a generalized complex structure.  If $B$ is simply connected, or if $\pi: X \to B$ is a trivial bundle, then in fact $X$ is a symplectic fibre bundle over $B$.
\end{thm}
\begin{proof}
If $B$ is simply connected, then the Gauss-Manin connection trivializes $H^{00,2}_{d_S}(X)$, or if $\pi:X \to B$ is trivial, this induces a trivialization of $H^{00,2}_{d_S}(X)$.  In either case, the hypotheses of Proposition \ref{omega flat on trivial bundle} are satsified, and thus the hypotheses of Lemma \ref{symplectic bundle 2}.
\end{proof}

\subsection{Generalized Calabi-Yau manifolds}\label{generalized Calabi-Yau manifolds}
A \emph{generalized Calabi-Yau} manifold (originally described in \cite{Hitchin2003}) is a generalized complex manifold whose canonical line bundle is generated by a global $d_H$-closed spinor.

Let $\pi:X \to B$ be a generalized complex smooth symplectic family over a complex manifold.  If $B$ is Calabi-Yau---that is, if its canonical bundle has a closed generating section $\rho_B$---then the spinor
$$\rho = e^{i\omega}\^\rho_B$$
on $X$ is $d_H$-closed for some closed 3-form $H$ and generates the canonical bundle $\kappa$ for the generalized complex structure.  Thus $X$ is generalized Calabi-Yau.  We remark that Example \ref{counterexample} below is generalized Calabi-Yau in this way.

\begin{example}\label{counterexample}
In the higher-dimensional case, in contrast with surface bundles, the fact that a generalized complex smooth symplectic family is compact and connected does not imply that it will be a symplectic fibre bundle.  We give as a counterexample a generalized complex structure on a $T^4$-bundle over $T^2$.

Consider the flat trivial bundle
$$X = T^4 \times \C \to \C.$$
Let $\theta_1,\theta_2,\theta_3,\theta_4$ be the standard basis of 1-forms for $T^4$, and let $x+iy$ be the complex coordinate on the base.  Let
$$\omega = \theta_1\^\theta_2 \,+\, \theta_3\^\theta_4 \,+\, x\,\theta_1\^\theta_3.$$
Let $N \subset TX$ be the horizontal distribution, giving a decomposition $d = d_S + \del + \bar\del$ and an extension of $\omega$ to $X$.  Then $d_S\omega = 0$ and $\del\bar\del\omega=0$, but $\nabla\omega \neq 0$---indeed, $\nabla[\omega]_{d_S} \neq 0$.

Let $\Lambda = \Z + i\Z \subset \C$ be the standard integral lattice.  We will define a monodromy homomorphism $\lambda : \Lambda \to \Aut(T^4)$ as follows: in the imaginary direction, $\lambda(i) = \Id$, and in the real direction, $\lambda(1)$ is the automorphism of $T^4$ which takes $\theta_2$ to $\theta_2 - \theta_3$ and leaves the others fixed.  Then
$$\lambda(1)^* : \omega \mapsto \theta_1\^\theta_2 + \theta_3\^\theta_4 + (x-1)\,\theta_1\^\theta_3.$$

Thus, at any $m+in \in \Lambda \subset \C$,
\begin{eqnarray*}
\lambda(m+in)^*(\omega(m+in)) &=& \theta_1\^\theta_2 + \theta_3\^\theta_4 + (m-m)\,\theta_1\^\theta_3 \\
&=& \omega(0)
\end{eqnarray*}
Then $\omega$ passes to $\tilde\omega$ on the flat bundle $\tilde{X} = X / \Lambda$.  It is still the case that $d_S\tilde\omega = 0$ and $\del\bar\del\tilde\omega=0$, so with the choices $\alpha=0$ and $\beta=0$ as in Theorem \ref{H iff alpha and beta}, we see that these data come from a generalized complex structure.  But $[\tilde\omega]_{d_S}$ is still not flat, so $(\tilde{X},\tilde\omega)$ is not a symplectic fibre bundle.
\end{example}






\chapter{Nondegenerate type change}

This chapter is a joint project with Marco Gualtieri.  Whereas in Chapter 3 we considered only \emph{regular} generalized complex structures, in this chapter we study generalized complex structures with type change.  Subject to a nondegeneracy condition, we show that a type change locus inherits the structure of a generalized Calabi-Yau manifold---in particular, given some compactness assumptions, it is a smooth symplectic family over an elliptic curve, of the sort discussed in the last chapter.  Finally, we prove a normal form theorem for a tubular neighbourhood of a nondegenerate type-change locus; we use a Moser-type argument, integrating a family of generalized vector fields to generalized diffeomorphisms (see Definition \ref{Courant flow}) which take the initial neighbourhood to the normal form.

\section{The setting}\label{the setting}

Let $M$ be a $(2n \geq 4)$-dimensional manifold and let $J$ be a generalized complex structure on $M$ with canonical line bundle $K\subset \^ ^\bullet T^*_\C M$ (Definition \ref{almost gc}).  $K$ projects to $\^ ^0 T^*_\C M = \C \times M$, giving a section $s \in K^*$.  Throughout this chapter, suppose that $s$ intersects the zero section transversally---we say that $J$ has \emph{nondegenerate type change}.

$J$ is symplectic type wherever $s \neq 0$.  Let $D\subset M$ be the type-change locus, which equals the vanishing locus of $s$.  Since $s$ is a nondegenerately vanishing section of a complex line bundle $s$, $D$ is a smooth $(2n-2)$-real-dimensional manifold.  We assume furthermore that $D$ is compact and connected.

\begin{rem}
Though many of our results hold for any $2n \geq 4$, a case of special interest is $2n=6$, since the 4-dimensional case has already been studied along similar lines (see \cite{CavalcantiGualtieri}), and since the 6-dimensional case is of particular interest in compactifications of string theory.
\end{rem}

\section{Generalized Calabi-Yau structure on the type change locus}

\begin{prop}\label{drho holomorphic}
Given the hypotheses of the previous section, $J$ is type 2 on $D$, i.e. it has 2 complex dimensions.  $N^*D$ is complex (and thus the other complex dimension lies in $D$).  Furthermore, if $\rho = \rho_0+\rho_2+\ldots$ is a local pure spinor in $\Gamma(K)$ near $x\in D$, then $d\rho_0|_x \in N^*_{1,0}D$.
\end{prop}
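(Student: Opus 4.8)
The plan is to read the proposition off the degree-$1$ component of the integrability equation together with the transversality hypothesis. Recall that integrability of $J$ means that, near any point, a local generator $\rho\in\Gamma(K)$ satisfies $d\rho = v\cdot\rho = \iota_X\rho + \xi\wedge\rho$ for some local section $v=X+\xi$ of $(TM\oplus T^*M)\otimes\C$ (in the twisted case one has $d\rho+H\wedge\rho=v\cdot\rho$, which does not affect the low-degree terms below). Writing $\rho=\rho_0+\rho_2+\cdots$ and comparing the parts of degree $1$ on each side---noting that $\iota_X\rho_0=0$ and that $H\wedge\rho_0$ has degree $3$---gives $d\rho_0 = \iota_X\rho_2 + \rho_0\,\xi$. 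Restricting to a point $x\in D$, where $\rho_0(x)=0$, this becomes $d\rho_0|_x = \iota_{X(x)}\rho_2(x)$. Note that rescaling $\rho\mapsto f\rho$ sends $\rho_0\mapsto f\rho_0$, so at a zero of $\rho_0$ the covector $d\rho_0|_x$ is well defined up to scale, and every statement below concerns only the line it spans.

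Next I would pin down the type. Since $\rho$ is an even form and $\rho_0$ vanishes on $D$, the type along $D$ is even and at least $2$. If it were $\geq 4$ at $x$, then $\rho_2(x)=0$ as well, and the displayed formula would force $d\rho_0|_x=0$; but nondegenerate type change says precisely that $s$, and hence $\rho_0$, vanishes transversally, so $d\rho_0|_x\neq 0$. This contradiction shows $J$ is type $2$ on $D$. In particular $\rho_2(x)$ is a nonzero decomposable form $\theta_1\wedge\theta_2$, the leading pure spinor of the type-$2$ structure, whose factors span the $2$-complex-dimensional holomorphic annihilator $W:=\langle\theta_1,\theta_2\rangle\subset T^*_{x,\C}M$ of the transverse complex structure, with $W\cap\overline W=0$ because $L\cap\overline L=0$ for a genuine generalized complex structure.

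Finally I would place $d\rho_0|_x$ inside $W$ and match it to the conormal directions. From $d\rho_0|_x=\iota_{X(x)}(\theta_1\wedge\theta_2)=(\iota_X\theta_1)\,\theta_2-(\iota_X\theta_2)\,\theta_1$ we see $d\rho_0|_x\in W$, i.e.\ it is of type $(1,0)$ for the transverse complex structure. On the other hand $T_xD=\ker\bigl(d\rho_0|_x\colon T_xM\to\C\bigr)$, so the complexified conormal space is $N^*_{x,\C}D=\langle d\rho_0|_x,\,\overline{d\rho_0|_x}\rangle$, and transversality guarantees these two covectors are $\C$-linearly independent. Since $d\rho_0|_x\in W$ is of type $(1,0)$ while its conjugate lies in $\overline W$, the space $N^*_{x,\C}D$ is the sum of a $(1,0)$-line and a $(0,1)$-line; hence it is invariant under the transverse complex structure, $N^*D$ is complex, and $N^*_{1,0}D=\langle d\rho_0|_x\rangle$, which gives $d\rho_0|_x\in N^*_{1,0}D$. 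Because $\dim_\C W=2$ while $\dim_\C N^*_{1,0}D=1$, the remaining $(1,0)$-direction is not conormal, i.e.\ the second complex dimension lies along $D$.

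I expect the main obstacle to be the bookkeeping of the pointwise type-$2$ normal form in the last step: identifying $W=\langle\theta_1,\theta_2\rangle$ as the holomorphic cotangent space of a genuine transverse almost complex structure (in particular deducing $W\cap\overline W=0$ from $L\cap\overline L=0$) and checking that the complex structure this induces on $N^*D$ is the very one for which $d\rho_0|_x$ is of type $(1,0)$. Once that identification is secured, the entire proposition follows from the single algebraic relation $d\rho_0|_x=\iota_{X(x)}\rho_2(x)$ combined with transversality.
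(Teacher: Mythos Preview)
Your proposal is correct and follows essentially the same route as the paper: extract the degree-$1$ part of the integrability equation to get $d\rho_0|_x=\iota_X\rho_2|_x$, use transversality to force $\rho_2|_x\neq 0$ and hence type $2$, then read off that $d\rho_0|_x$ lies in the $(1,0)$-space defined by $\rho_2|_x$ and that together with its conjugate it spans $N^*_\C D$. Your write-up is in fact more explicit than the paper's in ruling out type $\geq 4$ and in verifying that $N^*_\C D$ splits as a $(1,0)$-line plus a $(0,1)$-line.
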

\begin{proof}
Like $s$, $\rho_0$ vanishes nondegenerately at $D$.  Thus $d\rho_0$ is nonvanishing at $D$ and conormal to $D$.

Since $J$ is integrable (Definition \ref{define integrability}), $\exists\; X+\xi \in \Gamma(T_\C M \dsum T_\C^* M)$ such that $d\rho_0 = \iota_X \rho_2 + \xi\^\rho_0$. On $D$, $d\rho_0|_D = \iota_X \rho_2|_D$.  Since $d\rho_0|_D$ is nonzero, so is $\rho_2|_D$, and thus $\rho_2$ defines a 2-dimensional transverse complex structure on $D$.

But then $d\rho_0|_x = \iota_X \rho_2|_x$ is a $(1,0)$ form, whose real and imaginary parts generate $ND$.  So $ND$ is complex and $d\rho_0|_x \in N^*_{1,0}D$
\end{proof}

The generalized complex structure on $M$ will reduce to a generalized complex structure on $D$:

\begin{prop}
There is a unique pure spinor $\sigma \in \Gamma(\^ ^\bullet T_\C^*D)$ defined globally on $D$ such that, for any local pure spinor, $\rho \in \Gamma(K)$, generating the generalized complex structure near $D$,
$$\rho|_D = d\rho_0 \^ \sigma.$$
\end{prop}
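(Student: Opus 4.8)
The plan is to construct $\sigma$ locally, show it is forced to be tangential to $D$ and pure, and then observe that it does not depend on the chosen local generator, so that the local pieces patch to a global pure spinor; uniqueness will then be immediate from the defining relation. Fix $x\in D$ and a local generator $\rho\in\Gamma(K)$. By Proposition \ref{drho holomorphic}, $J$ is type $2$ at $x$, so $\rho|_x$ is a type-$2$ pure spinor and may be written as $\theta_1\wedge\theta_2\wedge e^{B+i\omega}$, where $\theta_1,\theta_2$ span the one-forms annihilating $\rho|_x$. That same proposition supplies $d\rho_0|_x=\iota_X\rho_2|_x\in\langle\theta_1,\theta_2\rangle$ and $d\rho_0|_x\in N^*_{1,0}D$, with the second complex direction tangent to $D$; so I may take $\theta_1=d\rho_0$ and $\theta_2$ a tangential $(1,0)$-form. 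In particular $d\rho_0\wedge\rho|_D=0$.

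Since $d\rho_0$ is a nowhere-zero one-form along $D$, exactness of the Koszul complex given by wedging with $d\rho_0$ turns $d\rho_0\wedge\rho|_D=0$ into a factorization $\rho|_D=d\rho_0\wedge\tau$, with $\tau$ determined only modulo the image of $d\rho_0\wedge(-)$. To pin down a canonical tangential representative, let $Z\in\Gamma(N_\C D)$ be the $(1,0)$ normal dual to $d\rho_0$ (so $\langle d\rho_0,Z\rangle=1$ and $\langle\overline{d\rho_0},Z\rangle=0$) and set $\sigma:=\iota_Z\rho|_D$ pulled back along $D\hookrightarrow M$. This $\sigma$ lies in $\Gamma(\wedge^\bullet T^*_\C D)$ by construction, and a short computation shows that $d\rho_0\wedge\sigma$ recovers $\rho|_D$ precisely when $\rho|_D$ has no component along $d\rho_0\wedge\overline{d\rho_0}$---equivalently, when the antiholomorphic normal line $N_{0,1}D$ (spanned by $\overline Z$) annihilates $\rho|_D$, i.e. $N_{0,1}D\subset L$, where $L$ is the $+i$-eigenbundle of $J$.

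Establishing $N_{0,1}D\subset L$ is the main obstacle, and the only place where integrability and the nondegeneracy of the vanishing of $\rho_0$ are really used. Purity alone does not force it: decomposing $B+i\omega$ into its tangential, mixed, and normal pieces, the offending $d\rho_0\wedge\overline{d\rho_0}$-component of $\rho|_D$ is governed by the tangential part of $\iota_{\overline Z}(B+i\omega)$, which is unconstrained for a general pure spinor. I will extract its vanishing from the integrability identity $d\rho=\iota_X\rho+\xi\wedge\rho$: restricting its degree-$3$ component $d\rho_2=\iota_X\rho_4+\xi\wedge\rho_2$ to $D$ and pairing against the normal directions should express the mixed component of $\rho_4|_D$ in terms of data already known to be $(1,0)$ by Proposition \ref{drho holomorphic}, forcing it to vanish. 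Dually, $N^*_{1,0}D=\langle d\rho_0\rangle\subset L$ is already in hand from the first step, and the two inclusions are the expected transverse-holomorphic compatibility of the normal complex structure with $L$.

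Granting $N_{0,1}D\subset L$, the factorization becomes $\rho|_D=d\rho_0\wedge\sigma$ with $\sigma=\theta_2\wedge e^{\beta}$ for the tangential $(1,0)$-form $\theta_2$ and a tangential two-form $\beta$; this is manifestly a type-$1$ pure spinor on $D$, so $\sigma$ is pure. Finally, if $\rho'=f\rho$ is another local generator with $f$ nowhere zero, then $\rho'_0=f\rho_0$ gives $d\rho'_0|_D=f\,d\rho_0$ while $\rho'|_D=f\,\rho|_D$, so the two factors of $f$ cancel and $\sigma$ is left unchanged. Hence the locally defined pure spinors agree on overlaps and glue to a single global $\sigma\in\Gamma(\wedge^\bullet T^*_\C D)$, and uniqueness follows because the relation $\rho|_D=d\rho_0\wedge\sigma$ together with tangentiality recovers $\sigma$ as $\iota_Z\rho|_D$ pulled back to $D$.
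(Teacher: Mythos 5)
Your construction and your uniqueness/gluing steps essentially reproduce the paper's proof: both arguments use the pointwise decomposition of the type-$2$ pure spinor together with Proposition \ref{drho holomorphic} to obtain a factorization $\rho|_x = d\rho_0|_x\wedge\tau$, define $\sigma$ as the pullback of $\tau$ (equivalently of $\iota_Z\rho$, as in the remark following the proposition) to $D$, and observe that the factor $f$ cancels between $d\rho'_0|_D=f\,d\rho_0|_D$ and $\rho'|_D=f\rho|_D$ when the generator is rescaled. The difficulty lies in the step you flag as ``the main obstacle,'' namely the claim that one must prove $\iota_{\bar Z}\rho|_D=0$, i.e.\ that $\rho|_D$ has no $d\rho_0\wedge\overline{d\rho_0}$-component. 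This is not needed. Since $\sigma$ is a form on $D$, the equation $\rho|_D=d\rho_0\wedge\sigma$ can only be read as $\rho|_D=d\rho_0\wedge\tilde\sigma$ for some lift $\tilde\sigma$ of $\sigma$ to $\wedge^\bullet T^*_\C M|_D$; any two $\tilde\sigma$ with $d\rho_0\wedge\tilde\sigma=d\rho_0\wedge\tilde\sigma'$ differ by a multiple of $d\rho_0$, which dies under $\iota_D^*$ because $\iota_D^*d\rho_0=0$. So the offending $d\rho_0\wedge\overline{d\rho_0}$-term is simply absorbed into the non-uniqueness of the lift and never has to vanish --- this is exactly the paper's observation that ``$\sigma_x$ and $\sigma'_x$ differ by some multiple of $d\rho_0|_x$'' and hence agree when pulled back to $D$. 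Your own formula $\sigma=\iota_D^*(\iota_Z\rho|_D)$ already gives the right object unconditionally, and is independent of the choice of lift of $Z$ for the same reason.

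More seriously, the statement you set out to prove at that step is false in general, so the integrability computation you sketch (``should express \dots forcing it to vanish'') cannot be completed. Take the local model of Example \ref{local model} and apply the real closed $B$-transform by $\Re(dw)\wedge dx_1$: this preserves integrability, leaves $\rho_0=w$ and hence the nondegenerate type change unchanged, but $\rho|_D=dw\wedge dz\wedge e^{B+i\omega}$ now contains the nonzero term $\tfrac12\,dw\wedge dz\wedge d\bar w\wedge dx_1$, so $\iota_{\partial_{\bar w}}\rho|_D\neq0$ and $N_{0,1}D\not\subset L$ for the coordinate lift of $\bar Z$. The conclusion of the proposition still holds there (with $\sigma=dz\wedge e^{\iota_D^*(B+i\omega)}$), precisely because the pullback to $D$ kills the extra term. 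The repair is therefore to delete the ``main obstacle'' paragraph entirely and state the conclusion at the level of pullbacks; with that change the rest of your argument is correct and coincides with the paper's.
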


\begin{proof} Let $x \in D$ and let $\rho=\rho_0+\rho_2+\ldots$ be a nonvanishing section of $K$ near $x$.  Since $ND|_x$ is complex, by the decomposition of pure spinors (Proposition \ref{pointwise form of J}),
$$\rho|_x = \Omega \^ \tau$$
for some $\Omega \in N^*_{1,0}|_xD$ and $\tau \in \^ ^\bullet T^*_\C|_xM$.

By Proposition \ref{drho holomorphic}, $d\rho_0|_x \in N^*_{1,0}D|_x$ also, and since $N^*_{1,0}D|_x$ is a complex line and $d\rho_0 \neq 0$, $\Omega = c\,d\rho_0|_x$ for some $c \in \C$.  Let $\sigma_x := c\tau$; then
$$\rho|_x = d\rho_0|_x \^ \sigma_x.$$

If $\rho' = f\rho \in \Gamma(K)$ is another choice of generating spinor near $x$, for some nonvanishing smooth function $f$, then
$$d\rho'_0 = d(f\rho_0) = \rho_0 df + f\,d\rho_0$$
so on $D$, $d\rho'_0|_x = f\,d\rho_0|_x$.  As with $\rho$, we have 
\begin{eqnarray*}
\rho'|_x &=& d\rho'_0|_x \^ \sigma'_x \\ 
f\rho|_x &=& f\,d\rho_0|_x \^ \sigma'_x\\
d\rho_0|_x \^ \sigma_x &=& d\rho_0|_x\^ \sigma'_x
\end{eqnarray*} 
So $\sigma_x$ and $\sigma'_x$ differ by some multiple of $d\rho_0|_x$.  In particular, they agree when pulled back to $D$.  We take $\sigma$ to be the pullback of $\sigma_x$ to $D$ at each point.
\end{proof}

\begin{rem}
For any vector field $Y$ for which $\iota_Y d\rho_0$ does not vanish (such a $Y$ certainly exists locally), $\sigma$ is equal to the pullback of
$$\tilde\sigma = \frac{\iota_Y \rho}{\iota_Y d\rho_0}$$
to $D$.
\end{rem}

\begin{example}\label{local model}
Let $w$ and $z$ be coordinates for $\C^2$, let $\omega$ be a symplectic form on $\R^{2n}$, and let closed 2-form.  The generalized complex structure on $\C^2 \times \R^{2n}$ given by the pure spinor
$$\rho = (w + dw\^dz)\^e^{B+i\omega}$$
satisfies our hypotheses.  The type change locus $D$ corresponds to $w=0$, and
\begin{eqnarray*}
\sigma &=& \frac{\iota_{\del_w} \rho}{\iota_{\del_w} d\rho_0} \\
&=& dz\^e^{B+i\omega}
\end{eqnarray*}

As we remarked, this $\sigma$ is only uniquely defined when pulled back to $D$
\end{example}

In general, to characterize the local structure near a point in $D$, we use the following two theorems.

\begin{thm}[Abouzaid, Boyarchenko \cite{AbouzaidBoyarchenko}]\label{local splitting} 
If $M$ is a generalized complex manifold and $p\in M$, then there is a neighourhood of $p$ which is isomorphic (via diffeomorphism and $B$-transform) to a product of a symplectic manifold and a generalized complex manifold which is of complex type at the image of $p$.
\end{thm}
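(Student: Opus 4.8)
The plan is to reduce the statement to the classical local splitting theorem for Poisson manifolds. Every generalized complex structure $J$ carries an underlying Poisson bivector $Q\in\Gamma(\wedge^2 TM)$, namely the upper-right block of $J$ regarded as an endomorphism of $TM\oplus T^*M$; integrability of $J$ (Courant-involutivity of its $+i$-eigenbundle $L$, Definition \ref{define integrability}) forces $Q$ to satisfy the Jacobi identity. The rank of $Q$ at $p$ is $2(n-k)$, where $k$ is the type of $J$ at $p$, since type equals $n-\tfrac12\operatorname{rank}(Q)$; thus the symplectic leaf $S$ through $p$ has dimension $2(n-k)$. First I would apply Weinstein's local splitting theorem to $Q$: after a diffeomorphism, a neighbourhood of $p$ becomes a product $U_S\times U_N$ on which $Q=Q_S\oplus Q_N$, where $Q_S$ is the nondegenerate (hence symplectic) Poisson structure of the leaf and $Q_N$ is a Poisson structure on $U_N$ vanishing at the image of $p$.

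Second, I would transport $J$ to this product and examine its block form relative to the splitting $T(U_S\times U_N)=TU_S\oplus TU_N$ together with the dual splitting of $T^*$. Because the anchor $Q$ is now block-diagonal and nondegenerate along $U_S$, the $U_S$-component of $J$ is pinned down to be of symplectic type, with symplectic form $\omega_S$ the inverse of $Q_S$; the remaining freedom is a two-form $B$ coupling the two factors, together with a two-form along $U_S$. A $B$-transform by a suitable local two-form removes these off-diagonal and longitudinal terms, after which $J$ becomes a genuine product $J_S\times J_N$. Here $J_S=e^{i\omega_S}$ is the symplectic structure on $U_S$, and $J_N$ is a generalized complex structure on $U_N$ whose Poisson structure $Q_N$ vanishes at the image of $p$, i.e. which is of complex type there, exactly as required.

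The main obstacle is the second step: showing that the Poisson splitting actually lifts to a $B$-field splitting of $J$. The bivector $Q$ only records the anchor of $L$, so one must use the full integrability of $J$ to control the endomorphism and two-form blocks, to check that the required $B$-transform preserves integrability, and to confirm that the residual structure $J_N$ is again an integrable generalized complex structure. Concretely I would verify this spinorially: writing a local generating pure spinor as $\rho=\Omega\wedge e^{B+i\omega}$ via the pointwise normal form of Proposition \ref{pointwise form of J}, the Weinstein coordinates split off $e^{i\omega_S}$, the $B$-part supplies the $B$-transform, and the integrability condition together with $Q_N=0$ at the image of $p$ forces $\Omega$ and the transverse data to be independent of the $U_S$-directions there, producing the complex-type factor. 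Arranging that the diffeomorphism and the $B$-transform can be chosen smoothly and simultaneously on a full neighbourhood, rather than merely pointwise, is the crux of the argument.
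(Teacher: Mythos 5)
First, a point of context: the paper does not prove this statement at all --- it is quoted as an external result of Abouzaid and Boyarchenko \cite{AbouzaidBoyarchenko} and used as a black box (together with Theorem \ref{4d model}) to establish Proposition \ref{local model holds}. So there is no in-paper proof to compare your argument against; what follows assesses your proposal on its own terms.

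Your strategy --- extract the Poisson bivector $Q$ from the upper-right block of $J$, apply Weinstein's splitting theorem to get $Q = Q_S \oplus Q_N$ with $Q_S$ symplectic, and then lift this to a splitting of $J$ itself by a $B$-transform --- is the right one, and is essentially the route taken in \cite{AbouzaidBoyarchenko}. The first step is fine: integrability of $J$ does make $Q$ Poisson, the type at $p$ is $n - \tfrac12\operatorname{rank}(Q_p)$, and Weinstein splitting applies. The genuine gap is the second step, which you correctly identify as ``the crux'' but then do not carry out. Saying that ``a $B$-transform by a suitable local two-form removes these off-diagonal and longitudinal terms'' asserts precisely what must be proved: (i) that such a two-form exists on a whole neighbourhood, not just pointwise (pointwise it is merely the content of Proposition \ref{pointwise form of J}, which you already invoke); (ii) that it can be chosen \emph{closed}, since only a closed $B$-transform is an automorphism of the standard Courant bracket and hence preserves integrability --- you never address closedness; and (iii) that the residual transverse structure $J_N$ is itself an integrable generalized complex structure on $U_N$. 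Your spinorial check at the image of $p$ also proves too little: the theorem only claims the transverse factor is of complex type \emph{at} that point (its type may drop nearby), so establishing independence of the $U_S$-directions ``there'' does not yield the product structure on a neighbourhood. The standard way to close this gap is a Moser/flow argument interpolating between $J$ and the candidate product structure, or an explicit construction of the closed coupling form from the Dirac-geometric data of $L$ along the leaf; without one of these, the proposal is an accurate roadmap rather than a proof.
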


\begin{thm}[Cavalcanti, Gualtieri \cite{CavalcantiGualtieri}]\label{4d model}
If $M$ is a 4-dimensional generalized complex manifold whose type changes nondegenerately (i.e., $s \in \Gamma(K^*)$ is transverse to the zero section as above), then about any point in the type change locus there is a neighbourhood isomorphic (via diffeomorphism and $B$-transform) to a neighbourhood of the origin in $\C^2$, with generalized complex structure given by the pure spinor
$$\rho = w + dw\^dz,$$
where $w$ and $z$ are complex coordinates.
\end{thm}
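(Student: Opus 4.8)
The statement is local near a point $p\in D$, so I would work in a neighbourhood $U$ of $p$ and exploit the structural results already established. The first step is to build coordinates adapted to the type change. By Proposition \ref{drho holomorphic} the structure is of type $2$ on $D$, the conormal line $N^*_{1,0}D$ is complex, and $d\rho_0|_D$ is a nonvanishing $(1,0)$-form; after trivialising $K$ I can therefore take $w:=\rho_0$ as a complex coordinate function cutting out $D=\{w=0\}$, and by the nondegeneracy hypothesis $w$ vanishes transversally. Since the complex dimension tangent to $D$ makes $D$ a Riemann surface, I would choose a holomorphic coordinate $z$ on $D$ and extend it off $D$, obtaining complex coordinates $(w,z)\colon U\to\C^2$ in which $D=\{w=0\}$.

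With these coordinates fixed, the second step is to match $\rho$ to the model $\rho^{\mathrm{mod}}=w+dw\wedge dz$ along $D$ to the order required for a deformation argument. The degree-zero parts agree by construction. The integrability identity $d\rho_0=\iota_X\rho_2+\xi\,\rho_0$, together with the computation of the induced spinor $\sigma$ (Example \ref{local model}), shows that $\rho_2|_D$ and $dw\wedge dz$ induce the same $\sigma$, hence agree modulo multiples of $dw$. Since a $B$-transform changes the degree-two part of $\rho$ only by a multiple of $\rho_0=w$, which vanishes on $D$, this residual ambiguity is precisely what a closed real $B$-field can absorb away from $D$; similarly, the degree-three part of integrability, $d\rho_2=\iota_X\rho_4+\xi\wedge\rho_2$, constrains the top-degree term $\rho_4$, which I would remove using the remaining $B$-field freedom and an overall rescaling of the spinor line. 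After these adjustments $\rho$ and $\rho^{\mathrm{mod}}$ agree to high order along $D$.

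The final step is a Moser-type argument of the kind used throughout this chapter. I would interpolate a family $\rho_t$ between $\rho^{\mathrm{mod}}$ and $\rho$, verify that each $\rho_t$ is an integrable pure spinor of the correct type, and solve for a time-dependent generalized vector field whose integrated Courant flow (Definition \ref{Courant flow})---a combination of diffeomorphisms and $B$-transforms---carries $\rho^{\mathrm{mod}}$ to $\rho$. \textbf{The principal difficulty} lies at the type-change locus itself: away from $D$ the structure is symplectic with $B+i\omega=(dw\wedge dz)/w$, which blows up like $w^{-1}$ as $w\to0$, so the Moser vector field must be shown to extend smoothly across $D$, to preserve both integrability and the type stratification along the entire homotopy, and to respect the reality and closedness constraints on admissible $B$-fields. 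Handling this degeneration at $D$, rather than any single algebraic identity, is where the substance of the proof resides.
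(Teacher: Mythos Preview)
The paper does not prove this theorem: it is stated with attribution to \cite{CavalcantiGualtieri} and used as a black box, so there is no proof here to compare your sketch against.

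On its own merits your outline is broadly in the spirit of the original Cavalcanti--Gualtieri argument, which is also a Moser-type deformation, and it parallels the paper's own machinery in Section~\ref{local structure near D}. Two cautions, however. First, watch for circularity: Proposition~\ref{drho holomorphic} is safe to invoke since its proof is self-contained, but Proposition~\ref{local model holds} and the dilation argument of Proposition~\ref{neighbourhood of D} both \emph{rely} on Theorem~\ref{4d model}, so you cannot borrow them. Your use of Example~\ref{local model} is not strictly circular (it is just a computation in the model), but the inference you draw from it---that $\rho_2|_D$ agrees with $dw\wedge dz$ up to multiples of $dw$---really comes from the intrinsic construction of $\sigma$, which in the $4$-dimensional case is a type-$1$ spinor on the surface $D$ and hence locally $dz$; you should say that directly. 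Second, the two issues you label as the ``principal difficulty'' are genuine and are exactly where the content lies: you must show that the interpolation $\rho_t$ stays an integrable pure spinor of the correct type for all $t$, and that the $B$-fields you introduce to kill the excess terms in $\rho_2$ and $\rho_4$ can be taken \emph{closed} and real. Your sketch names these obstacles but does not indicate how to overcome them; in the original proof they require real work, and until you supply that mechanism the argument is incomplete.
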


Under our hypotheses, the type change locus $D$ is of type 2, so by Theorem \ref{local splitting}, about any point $x \in D$ there is a neighbourhood which is a product of a
symplectic manifold with a 4-real-dimensional manifold which is of complex type at $x$.  Invoking
Theorem \ref{4d model}, we conclude:
\begin{prop}\label{local model holds}
About any point $x \in D$ there is a neighbourhood which is isomorphic to a neighbourhood of the origin in Example \ref{local model}.
\end{prop}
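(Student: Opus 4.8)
The plan is to obtain the normal form by composing the two structural theorems quoted above, using Proposition \ref{drho holomorphic} to pin down the type. First I would observe that, by Proposition \ref{drho holomorphic}, $J$ has type $2$ at the point $x \in D$. Applying Theorem \ref{local splitting} at $x$ produces, after a diffeomorphism and a $B$-transform, a local isomorphism of a neighbourhood of $x$ with a product $S \times V$, where $S$ is symplectic and $V$ carries a generalized complex structure of complex type at the image of $x$. Since type is additive under products and the symplectic factor $S$ contributes type $0$, the complex-type factor $V$ must have type $2$ at $x$; being of complex type there, $V$ is exactly $4$-real-dimensional, while $S$ absorbs the remaining $2n-4$ real dimensions.

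The main step is then to verify that the $4$-dimensional factor $V$ itself has \emph{nondegenerate} type change at the image of $x$, so that Theorem \ref{4d model} may legitimately be invoked; this does not follow formally from Theorem \ref{local splitting}, which only asserts complex type at the single point. This is where I expect the genuine work. I would track the section $s \in \Gamma(K^*)$ through the splitting. Under the product decomposition the pure spinor factors as $\rho = (w\text{-factor on }V) \^ e^{B + i\omega_S}$, and since the symplectic factor contributes $1$ in degree $0$, the degree-zero part of $\rho$ coincides with that of the spinor $\rho_V$ generating $V$. Hence $s$ is the pullback of the corresponding section $s_V$ on $V$ along the projection $S \times V \to V$. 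Because the fibres of this projection lie \emph{along} the zero locus of $s$, transversality of $s$ to the zero section on $M$ is equivalent to transversality of $s_V$ on $V$, so $V$ inherits nondegenerate type change and its type-change locus passes through the image of $x$.

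With nondegeneracy established, I would apply Theorem \ref{4d model} to $V$: a neighbourhood of the image of $x$ in $V$ is isomorphic, via a further diffeomorphism and $B$-transform, to a neighbourhood of the origin in $\C^2$ with pure spinor $w + dw \^ dz$. Composing this with the product decomposition and wedging back in the symplectic factor yields the pure spinor $(w + dw \^ dz) \^ e^{B + i\omega}$ on a neighbourhood of $x$ in $\C^2 \times \R^{2n-4}$, which is precisely the local model of Example \ref{local model}. The diffeomorphisms and $B$-transforms from the two theorems compose into a single such isomorphism, which finishes the argument.

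The step I expect to be genuinely delicate is exactly this descent of nondegeneracy: one must ensure that the diffeomorphism and $B$-transform produced by Theorem \ref{local splitting} do not disturb the transversality of $s$, using the facts that a $B$-transform fixes $\rho_0$ while the diffeomorphism acts by pullback and so sends a transverse section to a transverse section. Everything else in the argument is bookkeeping in assembling the two isomorphisms and matching the resulting spinor against Example \ref{local model}.
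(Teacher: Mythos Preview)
Your proposal is correct and follows exactly the paper's approach: apply Theorem \ref{local splitting} to split off a symplectic factor, leaving a 4-dimensional factor of complex type at $x$, then invoke Theorem \ref{4d model} on that factor and reassemble. In fact you are more careful than the paper, which passes over the descent of nondegeneracy to the 4-dimensional factor without comment; your argument that $\rho_0$ coincides with the degree-zero part of the $V$-spinor (since $B$-transforms and the symplectic exponential contribute $1$ in degree zero) and that transversality is preserved under diffeomorphism is the right way to fill that gap.
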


From this local model, we conclude that the reduced pure spinor $\sigma$ on $D$ is generalized complex and closed; thus,
\begin{prop}
Under our hypotheses, $D$ is a generalized Calabi-Yau manifold (as in Chapter 3, Section \ref{generalized Calabi-Yau manifolds}).
\end{prop}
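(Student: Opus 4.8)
The plan is to check that $\sigma$ satisfies the definition of a generalized Calabi--Yau structure from Chapter 3: that it is a nowhere-vanishing, $d$-closed form on $D$ which at each point is a pure spinor satisfying the nondegeneracy condition $(\sigma,\bar\sigma)\neq 0$, and which therefore generates a generalized complex structure on $D$. The global existence and uniqueness of $\sigma$ have already been established, and purity, nondegeneracy, and closedness are all properties that can be verified in a neighbourhood of each point; so it suffices to test them in the local model furnished by Proposition~\ref{local model holds}.

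Near any $x\in D$ I would therefore pass, via Proposition~\ref{local model holds}, to Example~\ref{local model}, where $D=\{w=0\}$ and $\sigma$ restricts to the pullback of $dz\wedge e^{B+i\omega}$. This form is the $B$-transform of $dz\wedge e^{i\omega}$, the pure spinor of the generalized complex structure obtained by taking the complex structure in the $z$-direction together with the symplectic structure $\omega$; consequently it is a (type~$1$) pure spinor and is nondegenerate, its Mukai pairing $(\sigma,\bar\sigma)$ being a nonvanishing top-degree form on $D$. Hence $\sigma$ generates a generalized complex structure on $D$.

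For closedness, note that in the model $dz$, $B$, and $\omega$ are all closed, so $dz\wedge e^{B+i\omega}$ is a closed form on the ambient product; since pullback commutes with $d$, its restriction to $D$ is closed, and because closedness is local this gives $d\sigma=0$ on all of $D$. Together with the previous paragraph, $\sigma$ is a closed nondegenerate pure spinor, which is precisely a generalized Calabi--Yau structure.

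The only real subtlety I anticipate is bookkeeping: verifying that the diffeomorphism-and-$B$-transform equivalence of Proposition~\ref{local model holds} restricts to an equivalence on $D$ carrying $\sigma$ to the pullback of $dz\wedge e^{B+i\omega}$, and that the $B$-transforms involved are by closed forms, so that this class of equivalences preserves both $d$-closedness and the nondegeneracy of the Mukai pairing. Once that compatibility is recorded, each defining property transfers from the model back to $D$ with no further computation.
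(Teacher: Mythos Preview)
Your proposal is correct and follows the same approach as the paper: both deduce the generalized Calabi--Yau properties of $\sigma$ directly from the local model of Proposition~\ref{local model holds}, where $\sigma$ becomes the pullback of $dz\wedge e^{B+i\omega}$. The paper's argument is the one-line assertion preceding the proposition, and your write-up supplies the details it leaves implicit, including the (valid and easily resolved) bookkeeping point that the Courant isomorphism carries $\sigma$ to the model $\sigma$ up to a closed $B$-transform.
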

\begin{rem}
We do not address explicitly the possibilty of a twisting 3-form, $H$, as was discussed in the previous chapter.  If we suppose that $J$ is $H$-integrable, for real closed $H \neq 0$, then Proposition \ref{local model holds} still holds, but the allowed isomorphisms include non-closed $B$-transforms which shift $H$, and thus the spinor $\sigma$ is $d_H$-closed, and $D$ is an $H$-twisted generalized Calabi-Yau manifold.
\end{rem}

In particular, $\sigma$ determines a symplectic foliation on $D$ and a 1-dimensional transverse complex structure.  According to our assumptions, $D$ is compact and connected.  For $D$, but also for any suitable generalized Calabi-Yau manifold, we may say the following.

\begin{prop}\label{family over elliptic curve}
If $D$ is a compact, connected, generalized Calabi-Yau manifold of type 1 everywhere, and it has at least one compact symplectic leaf, then it is a smooth symplectic family over an elliptic curve (Definition \ref{define smooth symplectic family}).  Furthermore, if $\dim(D)=4$ then $D$ is in fact a symplectic fibre bundle.
\end{prop}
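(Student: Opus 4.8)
The plan is to package the type-$1$ generalized Calabi--Yau datum as a codimension-two foliation carrying a transverse complex-translation structure, and then to run the structure theory of Lie foliations on the compact manifold $D$.

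First I would extract a closed transverse form. Writing the globally defined reduced spinor as $\sigma = \theta \wedge e^{B+i\omega}$, its degree-one component $\theta := \sigma_1$ is a globally defined $1$-form, and since the de Rham differential raises degree by exactly one, the lowest-degree (degree-two) component of $d\sigma$ is $d\theta = (d\sigma)_2$; hence $d\sigma = 0$ forces $d\theta = 0$. As the transverse complex structure determined by $\sigma$ is nondegenerate of type $1$ everywhere, $\theta$ and $\bar\theta$ are pointwise independent, so $\alpha := \mathrm{Re}\,\theta$ and $\beta := \mathrm{Im}\,\theta$ are two closed real $1$-forms, linearly independent at every point, whose common kernel is the symplectic foliation $\mathcal{F}$ of real codimension two. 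Being closed and vanishing on $T\mathcal{F}$, the forms $\alpha,\beta$ are $\mathcal{F}$-basic, so $\mathcal{F}$ is a Lie foliation modelled on $(\C,+)$, in particular transversely parallelizable. The associated period homomorphism is $h\colon \pi_1(D) \to \C$, $h(\gamma) = \int_\gamma \theta$, with image $\Lambda$, and the developing map $\Phi\colon \widetilde D \to \C$ satisfies $d\Phi = \theta$ and is $h$-equivariant.

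The crux is to promote the single compact leaf to a global fibration, and I expect this to be the main obstacle. By Molino's theorem for transversely parallelizable foliations on a compact manifold, the leaf closures are the fibres of a locally trivial fibration and are mutually diffeomorphic. A compact leaf is closed, so it is its own closure; thus one---hence every---fibre has dimension equal to $\dim\mathcal{F}$. A leaf of full dimension sitting inside a connected fibre is open, so each fibre must be a single compact leaf. Therefore every leaf is compact, $\mathcal{F}$ is simple, the leaf space $B := D/\mathcal{F}$ is a smooth compact surface, and $\pi\colon D \to B$ is a fibre bundle with the symplectic leaves as fibres. Transverse parallelizability (all leaf closures diffeomorphic) is precisely what converts the single compact leaf into compactness of all leaves. (Equivalently, the holonomy of any loop inside a leaf is the translation by $(\int\alpha,\int\beta) = 0$, so every compact leaf has trivial holonomy; Reeb stability makes the set of compact leaves open, and transverse parallelizability makes it closed.)

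Finally I would identify the base and assemble the symplectic family. The basic forms $\alpha,\beta$ descend to $B$, equipping it with a complete $(\C,\text{translation})$-structure; a closed surface carrying such a structure is developed by a diffeomorphism onto $\C$ with translational holonomy $\Lambda$, forcing $\Lambda$ to be a rank-two lattice and $B = \C/\Lambda$. The transverse complex structure makes $B =: E$ an elliptic curve, and the leafwise form $\omega$ exhibits $\pi\colon D \to E$ as a smooth symplectic family in the sense of Definition \ref{define smooth symplectic family}. When $\dim D = 4$ the fibres are compact surfaces; the total area $\int_{\text{fibre}}\omega$ is locally constant, hence constant over the connected base, and by Moser's theorem an area form on a surface is determined up to symplectomorphism by its total area. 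Over contractible charts of $E$ the fibrewise-symplectic bundle is then symplectically trivial, so the structure group reduces to symplectomorphisms and $\pi$ is a symplectic fibre bundle. Moser rigidity of surfaces is special to this dimension, which is why the symplectic-bundle conclusion is stated only for $\dim D = 4$.
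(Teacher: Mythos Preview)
Your argument is correct and reaches the same conclusion, but the route differs from the paper's at the key step of upgrading one compact leaf to compactness of all leaves.  The paper observes that $\sigma_1$ is closed and basic, then integrates it locally near a compact leaf $S$ to produce a holomorphic coordinate $z$ on a tubular neighbourhood; this shows $S$ has trivial holonomy, and then the paper invokes a theorem of Brunella on transversely holomorphic foliations of complex codimension one to conclude that every leaf is compact.  You instead recognise the foliation as a Lie foliation modelled on $(\C,+)$, hence transversely parallelizable, and apply Molino's structure theorem: the leaf closures fibre $D$, all are diffeomorphic, and the existence of one compact leaf forces every fibre to be a single compact leaf.

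Your approach has the advantage of using only the Lie-foliation structure (which is weaker than transverse holomorphicity) and of relying on the standard Molino package rather than a more specialised result; the paper's approach, by contrast, keeps the transverse complex structure in view throughout, which makes the identification of the base as a complex curve immediate.  Your identification of $B$ as an elliptic curve via the complete translation structure is equivalent to the paper's observation that a compact complex curve with a nowhere-vanishing holomorphic $1$-form has genus one.  For the $\dim D=4$ statement the paper defers to an earlier proposition on symplectic fibre bundles, whereas you spell out the Moser argument directly; the content is the same.
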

\begin{proof}
The degree-1 piece, $\sigma_1$, of $\sigma$ defines the complex structure transverse to the symplectic leaves.  Since $\sigma_1$ is closed and annihilates tangents to the leaves, it is basic.

Let $S \subset D$ be a compact symplectic leaf and let $U$ be a tubular neighbourhood of $S$.  A contraction of $U$ to $S$ induces an isomorphism of their de Rham cohomology, and since $\sigma_1$ annihilates $TS$, it is cohomologically trivial on $S$ and hence on $U$.  Thus on $U$ there is a holomorphic function $z$ ($\sigma_1 = dz$) which is constant on each leaf, and which identifies the leaf space of (a possibly smaller neighbourhood) $U$ with a disc $B\subset\C$.  Then $U$ is a fibre bundle over $B$.

In particular, $S$ has trivial holonomy.  Then by a result of Brunella \cite{Brunella} on transversely holomorphic foliations of complex codimension 1, since one leaf is compact, all the leaves are compact.

The leaf space, $E$, is locally modelled on complex discs, and the projection has local trivializations, thus $D$ is a fibre bundle over $E$.   $E$ is compact complex curve carrying a closed complex 1-form, $dz$; thus $E$ is an elliptic curve.

The final claim, which says that if $\dim(D)=4$ then the fibration has local \emph{symplectic} trivializations, follows from Proposition \ref{symplectic bundle}.
\end{proof}

In our setting, where $D$ is the type-change locus of $M$, $\dim(D)=4$ occurs when $2n=\dim(M)=6$, which is the case of special interest to us.

\section{Local structure near the type change locus}\label{local structure near D}

We continue to assume that $J$ is a generalized complex structure on $M$ with nondegenerate type change, as in Section \ref{the setting}, and that $D$ is its type change locus.  Using $J$, we will define a canonical generalized complex structure $J_N$ on the total space of the normal bundle $ND$, and give some information about it.  Then we will prove the existence of a Courant isomorphism $\psi_0$ between a neighbourhood of the zero section of $ND$ and a neighbourhood of $D$ and  such that $\psi_0^*J = J_N$.

\subsection{Linear structure on $ND$}

We define the generalized complex structure on $ND$ through a somewhat inelegant procedure of taking a limit under dilations.  A more direct definition is not clear to us: because of the type change, any definition of $J_N$ must implicitly take a derivative, so there is a limit happening somewhere; but the trick used in \cite{CavalcantiGualtieri} in dimension $2n=4$ does not work for $2n>4$.

\begin{defn}
Suppose we have chosen some embedding of a neighbourhood of $0_{ND} \subset ND$ into $M$ so that, abusing notation, $ND \subset M$.  This determines a family of dilations $\phi_t$ of $ND \subset M$ which scale each fibre by $t\in\R$.  The linearization of $J$ with respect to this choice is
$$J_N := \lim_{t\to0} \phi_t^* J = \lim_{t\to0} J_t,$$
where $J_t=\phi_t^* J$.  If $\rho \in \Gamma(K)$ is a local pure spinor generating $J$ near $x\in D$, the linearization of $\rho$ with respect to this choice is
$$\rho^N = \lim_{t\to0} t^{-1} \phi_t^* \rho = \lim_{t\to0} \rho^t.$$
\end{defn}

\begin{rem}
$\rho^t$ as defined above generates $J_t$.  We include the factor $t^{-1}$ in $\rho^t$, which doesn't change the generalized complex structure, in order to ensure the limit exists.  If $\rho^N$ exists and is generalized complex (which we will show), then it generates $J_N$.  In what follows, we are concerned primarily with $\rho_0$ and $\rho_2$, since for a generically symplectic structure these determine $\rho$.  Indeed, wherever $\rho_0 \neq 0$,
$$\rho = \rho_0 e^{\frac{\rho_2}{\rho_0}}.$$
\end{rem}

\begin{lem}\label{decompose rho_2 near D}
If $\rho=\rho_0+\rho_2+\ldots$ is a local pure spinor generating $J$ near $x\in D$, then there is a 1-form $\alpha$ and a 2-form $\theta$ such that
$$\rho_2 = d\rho_0\^\alpha + \rho_0\theta.$$
Furthermore, the pullbacks to $D$, $\iota_D^*\alpha$ and $\iota_D^*\theta$, do not depend on the choice of $\alpha$ and $\theta$.
\end{lem}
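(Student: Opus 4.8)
The plan is to obtain existence from the explicit local model (Proposition \ref{local model holds}) and to prove well-definedness of the pullbacks by a direct contraction argument. For existence I would start from Example \ref{local model}: there $\rho_0 = w$ and $\rho_2 = dw\wedge dz + w(B+i\omega) = d\rho_0\wedge dz + \rho_0(B+i\omega)$, so $\alpha = dz$ and $\theta = B+i\omega$ furnish the decomposition in the model. I would then observe that a decomposition $\rho_2 = d\rho_0\wedge\alpha + \rho_0\theta$ is preserved under each move relating $\rho$ to a rescaled pushforward of the model spinor: under a diffeomorphism $\phi$ every term pulls back naturally, with $d(\phi^*\rho_0)=\phi^*d\rho_0$; under a $B$-transform $\rho\mapsto e^{B'}\rho$ one has $\rho_0\mapsto\rho_0$ and $\rho_2\mapsto\rho_2+B'\rho_0$, so $\theta\mapsto\theta+B'$; and under a rescaling $\rho\mapsto f\rho$, since $d(f\rho_0)=f\,d\rho_0+\rho_0\,df$, the form $\alpha$ is unchanged while $\theta\mapsto\theta-f^{-1}\,df\wedge\alpha$. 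Because Proposition \ref{local model holds} makes $\rho$ near $x$ a nonvanishing multiple of the pushforward of the model spinor, the decomposition then exists near $x$.

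For well-definedness I would take two decompositions of the same $\rho_2$ and subtract, writing $a=\alpha-\alpha'$ and $t=\theta-\theta'$, so that $d\rho_0\wedge a+\rho_0 t=0$. Choosing a vector field $Y$ with $\iota_Y d\rho_0=1$ near $D$ (available since $d\rho_0\neq0$ there by Proposition \ref{drho holomorphic}) and contracting the relation gives $a=(\iota_Y a)\,d\rho_0-\rho_0\,\iota_Y t$; pulling back to $D$ and using $\iota_D^*d\rho_0=d\,\iota_D^*\rho_0=0$ (as $\rho_0|_D=0$) then yields $\iota_D^*a=0$, i.e. $\iota_D^*\alpha=\iota_D^*\alpha'$. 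Feeding this expression for $a$ back into $d\rho_0\wedge a+\rho_0 t=0$ collapses it to $\rho_0\,(t-d\rho_0\wedge\iota_Y t)=0$, and since $\rho_0\neq0$ off the codimension-two locus $D$, continuity forces $t=d\rho_0\wedge\iota_Y t$ near $D$; pulling back gives $\iota_D^*t=\iota_D^*d\rho_0\wedge\iota_D^*(\iota_Y t)=0$, so $\iota_D^*\theta=\iota_D^*\theta'$. Hence both pullbacks are independent of the choices.

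The one genuinely delicate point---and the reason I route existence through the explicit model rather than a direct ``contract and divide'' argument---is divisibility by the \emph{complex}-valued function $\rho_0$. Setting $\alpha := \iota_Y\rho_2$ and $\beta := \rho_2 - d\rho_0\wedge\alpha$ gives $\iota_Y\beta = 0$, and from $\rho|_x = d\rho_0|_x\wedge\sigma_x$ one sees that $\beta$ vanishes pointwise along $D$; but vanishing on $D$ does not by itself give divisibility by $\rho_0$ (for instance $\bar\rho_0$ vanishes on $D$ yet is not smoothly divisible by $\rho_0$). Using the integrability relation $d\rho_0=\iota_X\rho_2+\xi\wedge\rho_0$ of Proposition \ref{drho holomorphic}, proving $\beta=\rho_0\theta$ reduces to the divisibility of $\rho_2\wedge\rho_2$ by $\rho_0$, which is precisely an expression of the transverse holomorphicity and is made manifest by the local model---there $d\rho_0\wedge\alpha$ squares to zero, so $\rho_2\wedge\rho_2$ is patently a multiple of $\rho_0$. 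Invoking Proposition \ref{local model holds} sidesteps this analytic subtlety, while the uniqueness argument avoids it because it only asserts equality of pullbacks, where $\iota_D^*d\rho_0=0$ does the work.
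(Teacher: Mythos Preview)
Your proof is correct. The uniqueness half is essentially the paper's argument made explicit: the paper argues more tersely that on $D$ the relation $d\rho_0\wedge(\alpha-\alpha')+\rho_0(\theta-\theta')=0$ forces $\alpha-\alpha'$ to be a multiple of $d\rho_0$, then wedges with $d\rho_0$ and divides by $\rho_0$ off $D$ to get $d\rho_0\wedge(\theta-\theta')=0$ everywhere by continuity, whence $\iota_D^*(\theta-\theta')=0$. Your contraction with an explicit $Y$ accomplishes the same thing.

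For existence the approaches genuinely differ. The paper does \emph{not} invoke the local model; it argues directly from integrability. Since $\rho_2/\rho_0$ is closed where defined, one has $\rho_0\,d\rho_2=d\rho_0\wedge\rho_2$ everywhere by continuity, and the decomposition is extracted from this relation alone. The paper's wording at that step is terse, but your divisibility worry can in fact be dispelled within this framework: with $Y$ chosen as in your uniqueness argument ($\iota_Y d\rho_0=1$), set $\alpha:=\iota_Y\rho_2$ and $\eta:=\rho_2-d\rho_0\wedge\alpha$; then $\iota_Y\eta=0$ (since $\iota_Y\alpha=\iota_Y\iota_Y\rho_2=0$), and contracting the relation $d\rho_0\wedge\eta=\rho_0\,d\rho_2$ with $Y$ gives $\eta=\rho_0\,\iota_Y d\rho_2$ outright, so $\theta=\iota_Y d\rho_2$ works. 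The integrability identity itself supplies the factor of $\rho_0$, with no appeal to the Abouzaid--Boyarchenko or Cavalcanti--Gualtieri normal forms. Your route through Proposition~\ref{local model holds} is valid and has the virtue of making $\alpha$ and $\theta$ completely explicit, but it imports substantial external machinery for what is, once seen this way, an elementary consequence of the closedness of $\rho_2/\rho_0$.
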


\begin{proof}  Since $J$ is integrable, the 2-form $\rho_2/\rho_0$ is closed where it is defined, thus
$$\rho_0 d\rho_2 = d\rho_0\, \rho_2.$$
By continuity, this will be true even where $\rho_0=0$.  Then $\rho_2$ is the sum of a piece vanishing when $\rho_0=0$ and a piece which annihilates $d\rho_0$.  Therefore we may write
\begin{equation}
\rho_2 = d\rho_0 \^ \alpha + \rho_0 \theta
\end{equation}

For uniqueness, we rely on the fact that $\iota_D^*d\rho_0 = 0$.  For different such choices $\alpha,\theta$ and $\alpha',\theta'$,
$$0 = \rho_2 - \rho_2 = d\rho_0 \^ (\alpha - \alpha') + \rho_0 (\theta - \theta').$$
On $D$, when $\rho_0=0$, we see that $\alpha - \alpha'$ is a multiple of $d\rho_0$, and thus $\iota^*(\alpha - \alpha')=0$.  Wedging the above equation with $d\rho_0$, we get
$$0 = 0 + \rho_0 d\rho_0 \^ (\theta - \theta').$$
Away from $D$, when $\rho_0 \neq 0$, we have $d\rho_0 \^ (\theta - \theta')=0$, and by continuity this must extend to $D$.  Again, $\iota^*(\theta-\theta')=0$s.
\end{proof}

\begin{prop}
The limit $\rho^N$ exists, and as a spinor on $ND$ is independent of the choice of embedding $ND\subset M$.
\end{prop}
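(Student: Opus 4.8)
The plan is to reduce everything to explicit computations of $\rho_0$ and $\rho_2$ in the dilated coordinates, using the structural decomposition from Lemma \ref{decompose rho_2 near D} to control the degree-2 part. First I would fix the embedding $ND \subset M$ and adapted coordinates in which $D = \{w = 0\}$ for a complex function $w$, so that $\phi_t$ scales the normal (transverse) coordinates by $t$ while fixing the coordinates along $D$. Since $\rho_0$ vanishes nondegenerately at $D$ (Proposition \ref{drho holomorphic}), its pullback $\phi_t^* \rho_0$ is, to leading order, linear in the normal directions, so $t^{-1}\phi_t^* \rho_0 \to d\rho_0|_D$ contracted appropriately — more precisely the limit picks out the linear part of $\rho_0$ along the fibres, which is governed by $d\rho_0|_D$. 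This is the easy half: a nondegenerately vanishing function has a well-defined linearization, and the factor $t^{-1}$ is exactly what is needed to make $\lim_{t\to 0} t^{-1}\phi_t^*\rho_0$ finite and nonzero.

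Next I would handle $\rho_2$ using the decomposition $\rho_2 = d\rho_0 \wedge \alpha + \rho_0\,\theta$. Applying $t^{-1}\phi_t^*$ to each term, the piece $t^{-1}\phi_t^*(\rho_0\theta)$ behaves like $(t^{-1}\phi_t^*\rho_0)\cdot \phi_t^*\theta$, whose first factor converges and whose second factor converges to $\iota_D^*\theta$ since $\theta$ is a smooth form and $\phi_t \to \iota_D$; the piece $t^{-1}\phi_t^*(d\rho_0 \wedge \alpha)$ I would treat by commuting the pullback past $d$ and reusing the linearization of $\rho_0$, giving $d(\lim t^{-1}\phi_t^*\rho_0)\wedge \iota_D^*\alpha$ up to terms that vanish in the limit. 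The higher-degree pieces $\rho_4, \dots$ are then controlled because, away from $D$, $\rho = \rho_0\, e^{\rho_2/\rho_0}$, so once the limits of $\rho_0$ and $\rho_2$ exist and $\rho^N_0 \neq 0$ generically, the whole spinor limit exists and is determined. I would record that $\rho^N_0$ and $\rho^N_2$ have the shape of Example \ref{local model}, consistent with the expected linear model.

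For independence of the embedding, the key observation is that two choices of embedding differ by a diffeomorphism of a neighbourhood of the zero section fixing $D$ pointwise and acting as the identity on $ND$ to first order along $D$ — that is, a diffeomorphism tangent to the identity along $D$. I would argue that such a diffeomorphism commutes with the dilation limit: conjugating $\phi_t$ by a map that is the identity to first order along $D$ produces a correction that is $O(t)$ relative to the leading term, hence washes out as $t \to 0$. Concretely, the limit $\lim_{t\to 0} t^{-1}\phi_t^*\rho$ depends only on the $1$-jet of the embedding along $D$, and any two admissible embeddings of $ND$ (being genuine realizations of the normal bundle) share the same $1$-jet along the zero section by definition of the normal bundle. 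I expect the main obstacle to be making this jet-level invariance rigorous: one must check that the $t^{-1}$ scaling does not amplify the second-order discrepancy between embeddings into a nonvanishing contribution, and that the limits of $\rho^N_0$ and $\rho^N_2$ individually (not just of their combination) are embedding-independent. I would resolve this by expanding both embeddings in a Taylor series transverse to $D$ and tracking the powers of $t$ term by term, showing every correction carries a strictly positive power of $t$.
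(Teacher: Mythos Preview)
Your existence argument matches the paper's: compute $\rho_0^N$ and $\rho_2^N$ via Lemma~\ref{decompose rho_2 near D}, then recover the full spinor from $\rho = \rho_0\, e^{\rho_2/\rho_0}$, appealing to the local model for the explicit shape. For embedding-independence you take a different route. The paper simply notes that the computed limits $\rho_0^N = w$ and $\rho_2^N = dw\wedge\iota_D^*\alpha + w\,\iota_D^*\theta$ involve only intrinsic data on $ND$---the fibre-linear function $w$ coming from $d\rho_0|_D$, and pullbacks from $D$ which, by the uniqueness clause of Lemma~\ref{decompose rho_2 near D}, are independent of the choice of decomposition---so independence of the embedding is immediate, with no jet bookkeeping, and the paper then invokes Proposition~\ref{local model holds} to compute the full $\rho^N$ in one convenient choice of coordinates. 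Your jet argument is viable but needs a correction: two tubular embeddings of $ND$ need \emph{not} agree to first order along $D$; the derivative of the comparison map is only block upper-triangular, identity on $TD$ and on $ND$ but with a possibly nonzero off-diagonal piece $A:ND\to TD$. The limit is still insensitive to $A$ (essentially because $\rho_0|_D=0$ kills its tangential derivative along $D$, and $\phi_t^*$ of a fixed form limits to the pullback from $D$), and your proposed Taylor expansion would reveal this, but it is more work than the paper's direct reading.
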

\begin{proof}
First we look at $\rho_0$.  In this case we are just differentiating a function:
$$\rho_0^N = \lim_{t\to0} t^{-1}\phi_t^*\rho_0 = w,$$
where $w$ is just $d\rho_0$ interpreted as a linear function on $ND$.  This doesn't depend on the embedding of $ND$.

Now we look at $\rho_2$.  By Lemma \ref{decompose rho_2 near D}, we have
\begin{eqnarray*}
\rho_2^N &=& \lim_{t\to0} t^{-1}\phi_t^*\rho_2 \\
&=& \lim_{t\to0} d(t^{-1}\phi_t^*\rho_0) \^ \phi_t^*\alpha + t^{-1}(\phi_t^*\rho_0)\, \phi_t^*\theta \\
&=& dw \^ \iota_D^*\alpha + w\, \iota_D^*\theta
\end{eqnarray*}
This expression does not depend on the choice of embedding, nor---as per Lemma \ref{decompose rho_2 near D}---does it depend on the choices of $\alpha$ and $\beta$.

Thus without loss of generality we may compute $\rho_0$ and $\rho_2$ for a convenient choice of coordinates.  As per Proposition \ref{local model holds}, near any point in $D$ there are complex coordinates $w$ and $z$, and a collection of real coordinates which we denote $x$, for which $J$ is generated by
$$\rho = (w + dw\^dz)\^e^{B+i\omega}.$$
($\omega$ is a symplectic form in the $x$'s and $B$ is a closed 2-form.)  These coordinates give us a choice of embedding of $ND$ (fibres have constant $z$ and $x$), and thus a means of scaling (by $w \mapsto tw$).  In the limit, with coordinates $w$ and $z$ reinterpreted as coordinates on $ND$, we have
\begin{equation}\label{normal form on ND}
\rho^N = (w + dw\^dz)\^e^{\iota_D^*B + i\omega}.
\end{equation}

We conclude that, regardless of our choices, $\rho^N = \rho_0^N \exp(\rho_2^N / \rho_0^N)$ exists and is generalized complex; thus it generates $J_N$.
\end{proof}

\begin{cor}
$J_N$ exists and is generalized complex.
\end{cor}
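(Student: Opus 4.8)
The plan is to obtain this as an essentially immediate consequence of the preceding proposition. Recall that a generalized complex structure is the same data as a canonical line bundle $K \subset \^ ^\bullet T^*_\C M$ locally generated by a pure spinor $\rho$ satisfying the nondegeneracy and integrability conditions of Definition \ref{define integrability}; the structure $J$ is recovered from $\rho$ by taking its annihilator $L = \{X+\xi : (X+\xi)\cdot\rho = 0\}$ as the $+i$-eigenbundle. The previous proposition supplies exactly what is needed: the rescaled limit $\rho^N = \lim_{t\to 0}\rho^t$ exists, is independent of the embedding $ND\subset M$, and is a pure spinor which is generalized complex. It remains only to see that this spinor-level statement upgrades to a statement about the structures $J_t = \phi_t^* J$ and their limit.

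First I would note that for each $t\neq 0$ the spinor $\rho^t$ generates $J_t$, so its annihilator $L_t = \{X+\xi : (X+\xi)\cdot\rho^t = 0\}$ is the $+i$-eigenbundle of $J_t$. The assignment sending a nondegenerate pure spinor to its annihilator is continuous, so the convergence $\rho^t \to \rho^N$ forces $L_t \to L_N$, the annihilator of $\rho^N$. Because $\rho^N$ is a nowhere-vanishing pure spinor whose Mukai pairing $(\rho^N, \overline{\rho^N})$ is nonzero---as one checks directly from the normal form \eqref{normal form on ND}, in which the degree-zero part $w$ vanishes only transversally along the zero section---the bundle $L_N$ is maximal isotropic with $L_N \cap \overline{L_N} = 0$, hence is the $+i$-eigenbundle of a genuine almost generalized complex structure $J_N$ on $ND$. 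Thus $J_N = \lim_{t\to 0}\phi_t^* J$ exists pointwise. Since the local generating spinors for $J$ transform by nonvanishing functions and linearization preserves this, the limits $\rho^N$ patch to a globally defined canonical line bundle $K_N$ and a global $J_N$.

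For integrability I would simply invoke the statement, already recorded in the previous proposition, that $\rho^N$ is generalized complex: the explicit model \eqref{normal form on ND} exhibits $\rho^N$ as $d$-closed up to the Clifford action of a generalized vector field, which is precisely the integrability of the structure it generates. The only place where the argument could fail---and therefore the one point to be careful about---is that the limit must not degenerate: one must confirm that $\rho^N$ remains a nonzero pure spinor with $L_N \cap \overline{L_N} = 0$ everywhere on $ND$, including along the zero section where the type jumps from $2$ to $0$. This is guaranteed by the local model, in which $\rho_0^N = w$ again vanishes transversally, so the linearization faithfully reproduces the nondegenerate type change rather than collapsing it. With nondegeneracy in hand, the corollary follows.
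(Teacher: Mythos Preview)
Your proposal is correct and takes the same approach as the paper: the corollary follows immediately from the preceding proposition, which already concludes that $\rho^N$ exists, is generalized complex, and generates $J_N$. You spell out more carefully why the limit of the $J_t$ coincides with the structure generated by $\rho^N$ (continuity of the annihilator map, the Mukai-pairing nondegeneracy check, global patching), but this is an elaboration of the same argument rather than a different route.
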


\begin{rem}\label{structure on ND}
\eqref{normal form on ND} gives us a concrete local description of $J^N$ on $ND$.  It is given by a linear holomorphic Poisson structure $w\del_w\^\del_z$, with $w\in\Gamma(N^*D)$, times a local symplectic leaf in $D$.  As per Remark \ref{S and I well-defined} in Chapter 3, the $B$-field $\iota_D^*B$ is globally defined on $D$ only up to forms of complex bidegree $(2,0) + (1,1)$.  If $\iota_D^*B$ has a closed, real extension to $\Omega^2(D)$, then we may subtract it away by a $B$-transform, but this may not be possible in general.
\end{rem}

\subsection{Equivalence of local and linear structures near the type change locus}

\begin{rem}\label{smooth convergence}
We make the general observation---a consequence of Hadamard's lemma---that if a smooth tensor $\theta$ on the total space of $ND$ goes to zero under the dilation, i.e.,
$$\lim_{t\to0} \phi_t^*\theta = 0,$$
then the convergence is smooth in $t$.
\end{rem}

Recall that, as described in Chapter 2, Section \ref{Courant automorphisms}, a \emph{generalized vector field} integrates to a family of Courant automorphisms; and if we differentiate the pushforward (or pullback) of a generalized complex structure under these automorphisms, we have the \emph{infinitesimal action} of a generalized vector field on a generalized complex structure.
\begin{lem}\label{J(closed) acts trivially} 
If $\theta$ is a closed 1-form and $J$ a generalized complex structure, then the infinitesimal action of the generalized vector field $J(\theta)$ on $J$ is trivial.
\end{lem}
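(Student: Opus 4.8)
The plan is to reduce everything to the Courant bracket and then exploit integrability. Recall from Chapter 2, Section \ref{Courant automorphisms}, that the infinitesimal action of a generalized vector field $\varepsilon$ on $J$ is the commutator of the bracket-adjoint action with $J$; evaluated on a section $e$ of $T_\C M \dsum T_\C^* M$ this reads
$$(\delta_\varepsilon J)(e) = [\varepsilon, Je] - J[\varepsilon, e],$$
where $[\cdot,\cdot]$ denotes the (Dorfman) Courant bracket. Setting $\varepsilon = J(\theta)$, the goal is to show that the right-hand side vanishes for every $e$.

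The key step is to use integrability to trade the action of the complicated generalized vector field $J(\theta)$ for the action of $\theta$ itself. By Definition \ref{define integrability}, $J$ has vanishing generalized Nijenhuis tensor,
$$[Ja, Jb] - J[Ja, b] - J[a, Jb] - [a, b] = 0,$$
for all sections $a, b$. I would specialize to $a = \theta$ and $b = e$ and rearrange; the first two terms are precisely the infinitesimal action, so
$$(\delta_{J\theta} J)(e) = [J\theta, Je] - J[J\theta, e] = [\theta, e] + J[\theta, Je].$$
This is the crux of the argument: a priori one would have to analyze the whole flow generated by $J(\theta)$, but integrability collapses the computation to the much simpler operator $[\theta, \cdot]$, the Courant bracket of a purely cotangent element against an arbitrary section.

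Finally I would invoke closedness. Since $\theta$ has no tangent component, its Courant bracket against any $Y + \eta$ reduces to $-\iota_Y d\theta$, the only surviving term in the bracket formula; as $\theta$ is closed this vanishes identically, so $[\theta, \cdot] \equiv 0$. Both terms on the right-hand side then drop out, giving $(\delta_{J\theta} J)(e) = 0$ for all $e$, as claimed. I expect no serious obstacle here: the only care required is to write the infinitesimal-action formula, the Nijenhuis identity, and the explicit bracket in a single consistent sign convention, so that the Nijenhuis rearrangement genuinely produces the $[\theta, \cdot]$ terms. Since the conclusion asserts only that the action is \emph{trivial}, no residual sign ambiguity can affect the outcome, so this is a matter of presentation rather than a genuine difficulty.
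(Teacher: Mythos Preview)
Your argument is correct. The key observation that integrability, in the Nijenhuis form
\[
[Ja,Jb] - J[Ja,b] - J[a,Jb] - [a,b] = 0,
\]
lets one rewrite $(\delta_{J\theta}J)(e) = [J\theta,Je] - J[J\theta,e]$ as $[\theta,e] + J[\theta,Je]$ is exactly the right move, and the Dorfman bracket against a closed $1$-form in the first slot does indeed vanish. Your caveat about sign conventions is well taken but, as you note, immaterial to the conclusion.

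As for comparison: the paper states Lemma~\ref{J(closed) acts trivially} without proof, so there is nothing to compare against. Your write-up would serve as a suitable proof to insert. One small presentational suggestion: you might make explicit that the Nijenhuis identity holds with the Dorfman bracket (not only the skew-symmetrized Courant bracket), since the infinitesimal-action formula requires the Dorfman form; this follows because the two brackets agree on sections of the isotropic eigenbundle $L$, so closure of $L$ under either bracket is equivalent, and the Nijenhuis tensor computed with either bracket vanishes.
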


\begin{prop}\label{neighbourhood of D}
The generalized complex structures $J$ and $J_N$ are isomorphic in a neighbourhood of $D$. 
\end{prop}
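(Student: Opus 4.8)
The plan is to run a Moser-type argument along the family $J_t = \phi_t^* J$, which interpolates between $J_1 = J$ and $J_0 = J_N$, and to integrate a time-dependent generalized vector field into the Courant isomorphism we want. First I would confirm that this family extends smoothly across the degenerate endpoint $t=0$. Since $\phi_t$ is the time-$\log t$ flow of the Euler (radial) vector field $\mathcal{E}$ of $ND$, differentiating $\rho^t = t^{-1}\phi_t^*\rho$ gives
$$\dot\rho^t = \frac{1}{t}\,(\mathcal{L}_\mathcal{E} - \mathrm{id})\,\rho^t.$$
The apparent pole at $t=0$ is spurious: the normal form \eqref{normal form on ND} shows $\rho^N$ is homogeneous of weight one for $\mathcal{E}$, so $(\mathcal{L}_\mathcal{E}-\mathrm{id})\rho^t$ vanishes at $t=0$, and by Remark \ref{smooth convergence} it does so to first order in $t$. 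Hence $\dot\rho^t$, and with it $\dot J_t$, is smooth on all of $[0,1]$.

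Next I would set up the Moser flow. I seek a time-dependent generalized vector field $\varepsilon_t$ whose flow of Courant automorphisms $\Psi_t$ (Definition \ref{Courant flow}), with $\Psi_0 = \mathrm{id}$, satisfies $\Psi_t^* J_t = J_0$ for all $t$. Differentiating in $t$, this is equivalent to the infinitesimal equation
$$\mathcal{L}_{\varepsilon_t} J_t = -\dot J_t,$$
arranging that the infinitesimal action of $\varepsilon_t$ cancels the derivative of the family; then $\Psi_1$ realizes $\Psi_1^* J = J_N$, which is the asserted isomorphism. Because the dilations fix $D$ pointwise and act complex-linearly on the complex normal bundle $ND$, the restriction $J_t|_D$ is independent of $t$, so $\dot J_t$ vanishes along $D$; I would therefore choose $\varepsilon_t$ vanishing on $D$ as well. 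Since $D$ is compact, this keeps the flow inside a fixed tubular neighbourhood and guarantees that it exists for all $t\in[0,1]$ on a (possibly shrunken) neighbourhood of $D$.

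The substance of the argument is solving the infinitesimal equation for $\varepsilon_t$, smoothly in $t$ down to $t=0$. At the level of the generating spinor this means producing, for each $t$, a generalized vector field with $\mathcal{L}_{\varepsilon_t}\rho^t \equiv -\dot\rho^t \pmod{\langle \rho^t\rangle}$, a multiple of $\rho^t$ merely rescaling the spinor while fixing its line and hence $J_t$. Integrability of $J_t$ presents $\dot\rho^t$ as a closed element of the deformation complex of $\rho^t$, and I would extract a primitive $\varepsilon_t$ by exactness over the tubular neighbourhood, which retracts onto $D$; the explicit form \eqref{normal form on ND} allows this to be carried out concretely. Here Lemma \ref{J(closed) acts trivially} supplies the needed gauge freedom: the candidate $\varepsilon_t$ is determined only up to generalized vector fields $J(\theta)$ with $\theta$ closed, and I would use this ambiguity to select a representative that both vanishes on $D$ and depends smoothly on $t$ across $t=0$.

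The main obstacle I anticipate is exactly this last point. Because the deformation complex of $J_t$ degenerates along $D$ as the type jumps, the primitive $\varepsilon_t$ cannot be produced by elliptic theory with uniform bounds, and the delicate step is to exhibit a solution that is bounded—indeed vanishing on $D$—and smooth as $t\to 0$, so that the factor $t^{-1}$ in $\dot\rho^t$ is genuinely absorbed and the flow survives to the degenerate end. Controlling this uniformly near the type-change locus, rather than only away from it, is the crux of the proof.
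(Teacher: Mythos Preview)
Your Moser framework matches the paper's, and you correctly locate the difficulty at $t=0$. Two points, though. First, the appeal to ``exactness over the tubular neighbourhood'' is misplaced: the deformation complex of $J_t$ is not controlled by the de Rham homotopy type, so a retraction onto $D$ buys you nothing. In any case you have already written down a primitive in your first paragraph: from $\dot\rho^t = t^{-1}(\mathcal{L}_\mathcal{E}-\mathrm{id})\rho^t$ one reads off that $-t^{-1}\mathcal{E}$ solves $\mathcal{L}_{\varepsilon_t}\rho^t \equiv -\dot\rho^t \pmod{\langle\rho^t\rangle}$. This is the obvious solution, since $t^{-1}\mathcal{E}$ generates $\phi_t$; the whole problem is that it blows up at $t=0$.

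Second, and this is the genuine gap: you invoke Lemma \ref{J(closed) acts trivially} to adjust by some $J_t(\theta)$ with $\theta$ closed, but never say which $\theta$, and that choice is the entire content of the proof. The paper takes $\theta = dz$, the degree-$1$ part of the induced Calabi-Yau form $\sigma$ on $D$, pulled back to $ND$. The normal form \eqref{normal form on ND} gives $J_N(dz) = w\,\partial_w$, so $\Re(J_N(dz)) = \mathcal{E}$ exactly. Hence with $v_t = t^{-1}\bigl(\mathcal{E} - \Re(J_t(dz))\bigr)$ one has $\mathcal{L}_{v_t}J_t = \mathcal{L}_{t^{-1}\mathcal{E}}J_t = \dot J_t$ by the Lemma, while $t\,v_t = \mathcal{E} - \Re(J_t(dz)) \to 0$ as $t\to 0$, so by Remark \ref{smooth convergence} the limit $v_0$ exists and the flow is defined on all of $[0,1]$ near $D$. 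This explicit cancellation of the Euler field against $\Re(J_t(dz))$ is what absorbs the $t^{-1}$; your final paragraph correctly senses that something specific is needed here, but without identifying $dz$ the argument remains a sketch rather than a proof.
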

\begin{proof}
Fix an embedding of $ND$ in $M$.  We will show that all of the structures in the family $J_t,\;t\in[0,1]$ are isomorphic in a neighbourhood of $D$, by defining a family $\psi_t$ of Courant automorphisms fixing $D$ so that $J_t = \psi_t^* J$.

$\psi_t$ will be defined by integrating a family, $v_t$, of generalized vector fields vanishing at $D$ (Definition \ref{Courant flow}).  If we can find $v_t$ such that $\Lie_{v_t} J_t = \dot{J_t},$ then the resulting $\psi_t$ (defined in a neighbourhood of $D$) satisfies our requirements.

For $t>0$, we already have $J_t = \phi_t^*J$.  The family $\phi_t$ is generated by the vector field $t^{-1}\chi = t^{-1}\chi$, where $\chi = \Re(w\del_w)$ is the Euler vector field on the fibres.  Note that $\phi_t^* \chi = \chi$ for any $t>0$, so $t^{-1}\chi$ blows up at $t=0$, and does not itself satisfy our requirements.

As in Example \ref{local model} and \eqref{normal form on ND}, let $dz$ be the degree-1 part of $\sigma$, pulled back to $ND$. For $t>0$, let
$$v_t = t^{-1} \phi_t^*(\chi - \Re(J(dz))).$$
Since $\chi$ and $dz$ are invariant under $\phi_t^*$, $v_t=t^{-1}(\chi - \Re(J_t(dz))).$  Since $dz$ is closed, by Lemma \ref{J(closed) acts trivially},
$$\Lie_{v_t} J_t = \Lie_{t^{-1}\chi} J_t = \dot{J}_t.$$

Inspecting the normal form on $ND$, equation \eqref{normal form on ND}, we see
$$\lim_{t\to0} J_t(dz) = J_N(dz) = w\del_w.$$
Then
$$\lim_{t\to0} t\,v_t = \lim_{t\to0} (\chi - \Re(J_t(dz))) = 0.$$
As per Remark \ref{smooth convergence}, $\displaystyle{\lim_{t\to0} v_t}$ exists.  Then $v_t$ integrates to $\phi_t$ near $D$, for $t\in[0,1]$, and
$$\phi_0^*J = \lim_{t\to0} \psi_t^*J = J_N.$$
\end{proof}

Combining Proposition \ref{neighbourhood of D} with Remark \ref{structure on ND},
\begin{prop}
If $D\subset M$ is the nondegenerate-type-change locus of a generalized complex manifold, then (as we have said) it inherits a generalized Calabi-Yau form, $\sigma$, of type 1, and it has a neighbourhood which is isomorphic to a neighbourhood of the zero section in the following construction:
\end{prop}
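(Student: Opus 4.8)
The plan is to assemble the results established throughout this section into the single summary statement. There are two assertions to verify: that $D$ carries a generalized Calabi-Yau form $\sigma$ of type 1, and that a neighbourhood of $D$ in $M$ is isomorphic to a neighbourhood of the zero section of the linear model built on $ND$. My strategy is to treat these in turn, using Proposition \ref{local model holds} for the first and Proposition \ref{neighbourhood of D} for the second, and then to identify $J_N$ with the output of the construction via the explicit local form \eqref{normal form on ND}.

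The first assertion is essentially already in hand. By Proposition \ref{local model holds}, near any point of $D$ the structure $J$ is isomorphic to the local model of Example \ref{local model}; inspecting that model shows directly that the reduced spinor $\sigma = dz \^ e^{\iota_D^*B + i\omega}$ is both closed and pure, so $D$ is generalized Calabi-Yau. That $\sigma$ has type 1 follows because its degree-one part $\sigma_1 = dz$ is nonzero, which reflects the single transverse complex dimension that Proposition \ref{drho holomorphic} located inside $D$ (the other complex dimension of $J$ along $D$ lying in the normal direction).

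The second assertion is the content of Proposition \ref{neighbourhood of D}, which produces, via the Moser-type integration of the family $v_t$, a Courant isomorphism between a neighbourhood of the zero section of $ND$ equipped with $J_N$ and a neighbourhood of $D$ in $M$ equipped with $J$. What remains is to recognize $J_N$ as the output of the construction to follow. By Remark \ref{structure on ND}, equation \eqref{normal form on ND} presents $J_N$ locally as the linear holomorphic Poisson structure $w\del_w \^ \del_z$ in the normal fibre coordinate $w \in \Gamma(N^*D)$, wedged with the symplectic-leaf data $e^{\iota_D^*B + i\omega}$ of $D$. I would therefore verify that the construction, assembled globally from the data $(D,\sigma)$ together with the complex normal bundle $ND$, reproduces exactly this local form at every point, and then transport the result back across the isomorphism of Proposition \ref{neighbourhood of D}.

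The main obstacle is global coherence. The limit defining $J_N$ is manifestly local, depending on a chosen embedding $ND \subset M$ and on the auxiliary decomposition $\rho_2 = d\rho_0 \^ \alpha + \rho_0\theta$ of Lemma \ref{decompose rho_2 near D}; and, as noted in Remark \ref{structure on ND}, the transverse $B$-field $\iota_D^*B$ is only globally well-defined modulo forms of bidegree $(2,0)+(1,1)$. The real work lies in checking that the construction is insensitive to precisely this ambiguity, so that the local models \eqref{normal form on ND} glue to the intended global object: where $\iota_D^*B$ admits a closed real extension it can be removed by a $B$-transform, but in general one must confirm that the construction is framed to absorb the residual twisting. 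Once this compatibility is settled, the two assertions combine to give the stated normal form near $D$.
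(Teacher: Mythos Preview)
Your proposal is correct and follows the same route as the paper: the proposition is stated as a direct consequence of combining Proposition~\ref{neighbourhood of D} with Remark~\ref{structure on ND}, with the Calabi-Yau assertion already established earlier via Proposition~\ref{local model holds}. The paper is in fact more terse than you are---it offers no further argument beyond citing those two results---so your additional discussion of the global-coherence obstacle and the $B$-field ambiguity is extra care rather than a missing step.
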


\begin{example}
Let $p:X \to D$ be a holomorphic line bundle over $D$, with canonical bundle $X^*_{1,0}$.  Then $X^*_{1,0} \^ p^*\sigma$ is a well-defined spinor bundle giving a regular generalized complex structure of type 2 on the total space of $X$.  If $\bar{L}$ is the $-i$-eigenbundle of this structure, then let $\beta \subset \Gamma(\^ ^2\bar{L})$ be a holomorphic Poisson structure on $X$ which is linear in the fibres.  Then $\beta$ deforms $X^*_{1,0} \^ p^*\sigma$ to give a generalized complex structure with nondegenerate type change along the zero section.
\end{example}








\chapter{Generalized complex flat principal bundles}

In this chapter, we study generalized complex manifolds of mixed type with group symmetry.  If we suppose that the group action (in some generalized sense) is free and proper, then we will have something like a principal bundle.

A generalized complex structure induces a Poisson structure, and if the symplectic foliation is complementary to the orbits, we will have something like a flat connection.  We study this case in Section \ref{free and proper group actions}.  In order to do so, in Section \ref{flat bundle section} we define a generalized version of equivariant, flat fibre bundles, and then we prove that free and proper generalized complex group actions (satisfying the complementarity condition) are examples of such bundles.

Classifications of bundles with flat connections are well-known, and we prove similar classification results in the generalized geometric context (see Proposition \ref{classification 1}).  As a consequence of the Courant automorphism group being larger than the diffeomorphisms, the classification includes data corresponding to ``non-geometric'' degrees of freedom.  We describe this concretely in the case of generalized complex principal torus bundles, in Section \ref{torus bundle section}.

\section{Definitions}

\subsection{Courant algebroids}\label{Courant algebroid section}

In this section we consider generalized complex geometry from the perspective of abstract Courant algebroids.  Our presentation of this formalism is close to that in \cite{BursztynCavalcantiGualtieri}.  For proofs, and details, see \cite{Gualtieri2011}.

\begin{defn}\label{Courant algebroid}
A Courant algebroid $E$ on a manifold $M$ is a vector bundle on $M$ with the additional data of a bracket, $[\cdot,\cdot]:\Gamma(E)\times\Gamma(E)\to\Gamma(E)$, a nondegenerate symmetric bilinear pairing, $\pair{\cdot,\cdot}:E^2 \to E$, and an \emph{anchor map}, $\pi:E\to TM$, such that, if $e_1,e_2,e_3\in\Gamma(E)$ and $f \in C^\infty(M)$, then
\begin{enumerate}
\item $[e_1,[e_2,e_3]] = [[e_1,e_2],e_3] + [e_2,[e_1,e_3]]$,
\item $[e_1,fe_2] = f[e_1,e_2] + \left(\pi(e_1)\cdot f\right) e_2$, \label{C2}
\item $\pi(e_1)\pair{e_2,e_3} = \pair{[e_1,e_2],e_3} + \pair{e_2,[e_1,e_3]}$, and
\item $2[e_1,e_1] = \pi^*(d\pair{e_1,e_1})$. \label{C4}
\end{enumerate}
We identify $E^*$ with $E$ by the nondegenerate pairing, explaining the meaning of \eqref{C4}.  $E$ is called \emph{exact} if the following sequence is exact:
$$0 \to T^*M \oto{\pi^*} E \oto{\pi} TM \to 0.$$
By convention, for an exact Courant algebroid we consider $T^*M \subset E$ to be an inclusion, and omit mention of $\pi^*$.
\end{defn}

\begin{defn}\label{Courant isomorphism}
Let $E_M$ and $E_N$ be Courant algebroids on $M$ and $N$, and let $\phi : M \to N$ be a diffeomorphism.  A Courant isomorphism covering $\phi$ is a vector bundle map $\Phi : E_M \to E_N$ covering $\phi$ which respects the structures $[\cdot,\cdot]$, $\pair{\cdot,\cdot}$ and $\pi$.  (It follows that $\pi\comp\Phi = \phi_*\comp\pi$.)
\end{defn}

There are nontrivial automorphisms of $E$ over the identity diffeomorphism, consisting precisely of the $B$-transforms:
\begin{defn}
If $B$ is a closed $2$-form on a manifold $M$, the $B$-transform of a Courant algebroid $E$ on $M$ is the map
$$e^B : e \mapsto e + \iota_{\pi(e)}B.$$
\end{defn}

Any exact Courant algebroid on $M$ is isomorphic (by a choice of isotropic splitting of $T^*M \to E \to TM$) to a \emph{standard Courant algebroid} $\Tc_H M = TM \dsum T^*M$; in this case $\pi$ projects to $TM$, $\pair{\cdot,\cdot}$ is $\frac{1}{2}$ the natural pairing between $TM$ and $T^*M$, and if $X,Y \in \Gamma(TM)$ and $\xi,\eta \in \Gamma(T^*M)$ then
$$[X+\xi,Y+\eta] = [X,Y] + \Lie_X \eta + \iota_Y d\xi + \iota_x\iota_y H$$
for some closed 3-form $H$.  If $H=0$ we will just write $\Tc M$ for $\Tc_H M$.

\subsection{Generalized complex structures on Courant algebroids}\label{generalized complex section}

\begin{defn}\label{abstract generalized complex}
A \emph{generalized complex structure} $J$ on a manifold equipped with an exact Courant algebroid $E$ is a fibrewise complex structure on $E$ ($J : E \to E$, $J^2=-1$) which is $\pair{\cdot,\cdot}$--orthogonal and whose $+i$-eigenbundle is $[\cdot,\cdot]$--involutive (in $\C \tens E$).  A generalized complex isomorphism is a Courant isomorphism which respects the generalized complex structures.
\end{defn}

\begin{rem}\label{P and I}
The map $\pi \comp J \comp \pi^* : T^*M \to TM$ induces a Poisson bivector on $M$.  The kernel of this map, i.e., $J(T^*M) \cap T^*M$, inherits a complex structure.  That is, $J$ induces a Poisson structure, and a complex structure transverse to its symplectic foliation.
\end{rem}

\begin{example}\label{symplectic}
If $\omega$ is a symplectic form on $M$, then up to isomorphism there is a unique generalized complex structure inducing $\omega^{-1}$ as its Poisson structure:
$$J_\omega =
\left[\begin{array}{cc}
0 & -\omega^{-1} \\
\omega & 0
\end{array}\right]
: \Tc M \to \Tc M$$
\end{example}

\begin{example}\label{twisted complex}
If $I : TM \to TM$ is a complex structure, then there is a family of generalized complex structures inducing $I$ (and trivial Poisson structure), called \emph{twisted} complex structures.  Up to isomorphism, they correspond to
$$J_{I,H} = 
\left[\begin{array}{cc}
-I & 0 \\
0 & I^*
\end{array}\right]
: \Tc_H M \to \Tc_H M.$$
In this case, the integrability condition entails that $H$ must be a real closed 3-form of bidegree $(1,2) + (2,1)$ in the Dolbeault complex.
\end{example}

A generalized complex structure is invariant under a $B$-transform precisely when $B$ is a real closed 2-form vanishing on the symplectic foliation and of bidegree $(1,1)$ in the transverse complex structure.  In particular, $J_\omega$ has no nontrivial $B$-transforms.

\begin{defn}\label{Courant product}
If $E_M$ and $E_N$ are Courant algebroids on manifolds $M$ and $N$ respectively, then we define the \emph{product algebroid} $E_M \times E_N$ on the manifold $M \times N$ as follows.

The total space of $E_M \times E_N$ is just the (non-fibred) Cartesian product, the anchor map is the product map, and the pairing is the product pairing with $E_M \perp E_N$.  Transversely flat sections of $E_M \times 0_N$, i.e., those which are flat along $N$, inherit a bracket from $E_M$, and similarly for transversely flat sections of $E_N$.  The bracket between transversely flat sections of $E_M$ and $E_N$ vanishes.  All other brackets are determined from these along with the Leibniz rule (Axiom (2) in Definition \ref{Courant algebroid}).

If $J_M$ and $J_N$ are generalized complex structures on $E_M$ and $E_N$, then their product, $J_M \times J_N$, is a generalized complex structure on $E_M \times E_N$
\end{defn}

\begin{defn}
A generalized complex structure is \emph{regular} at a point $x$ if its induced Poisson structure is regular at $x$.
\end{defn}

\begin{thm}[Gualtieri \cite{Gualtieri2011}]\label{normal form}
If a generalized complex structure is regular at $x$, then there is a neighbourhood of $x$ on which it is isomorphic to $J_\omega \times J_I$ on a subset of $\Tc \R^{2n} \times \Tc \C^k$, for the standard symplectic structure $\omega$ on $\R^{2n}$ and complex structure $I$ on $\C^k$.
\end{thm}

\begin{rem}
In the above theorem, to decompose a neighbourhood in $M$ as product $U \subset \R^{2n} \times \C^k$, one must specify a complementary foliation to the symplectic foliation, which will correspond to the copies of $\C^k$.  This choice is not unique.  In fact, we augment the result with the following trivial observation of the original proof (see \cite{Gualtieri2011}):
\end{rem}
\begin{prop}\label{augmented normal form}
In Theorem \ref{normal form}, \emph{any} foliation complementary to the symplectic foliation, with respect to which the leafwise symplectic form is invariant, may be chosen to correspond to the complex leaves in the normal form.
\end{prop}

\subsection{Equivariance}

\begin{defn}
In this chapter, we will consider only left actions.  If $G$ is a Lie group, then a \emph{generalized (left) $G$-action} on a manifold $M$ equipped with Courant algebroid $E$ consists of a (left) $G$-action on $E$ by Courant automorphisms.  In particular, this covers an action of $G$ on $M$ in the usual sense.

If $M$ has a generalized complex structure $J$ on $E$, then a \emph{generalized complex} $G$-action on $M$ acts on $E$ by generalized complex automorphisms. 
\end{defn}

\begin{rem}
This notion of action in generalized geometry is somewhat different from that in \cite{BursztynCavalcantiGualtieri}.  They require that the group action be generated by the adjoint action of some sections of the Courant algebroid. This difference comes to the fore later in the chapter; we consider families of generalized complex structures which have symmetries in our sense, but for many of which there would be no corresponding \emph{lifted action} in the sense of \cite{BursztynCavalcantiGualtieri}.
\end{rem}

\section{Flat fibre bundles in generalized geometry}\label{flat bundle section}

Most of the conceptual issues in extending the usual notion of flat fibre bundles to generalized complex manifolds are related to the introduction of the Courant algebroid structure; thus, for the sake of clarity we separate the definition of generalized flat fibre bundles (without generalized complex or equivariant structure); then it is straightforward to add these additional structures.

\begin{rem}
We have only defined Courant \emph{isomorphisms}.  The language of general Courant morphisms (see \cite{LiuWeinsteinXu}) is more than we need here, though in our case a definition in those terms would in the end be equivalent to what we give below.
\end{rem} 

\subsection{Generalized flat fibre bundles}

\begin{defn}\label{generalized flat bundle}
A generalized flat fibre bundle consists of the following data: a fibre bundle $\psi : X \to B$ of smooth manifolds, with fibre $F$; exact Courant algebroids $E_F$, $E_X$ and $E_B$ on $F$, $X$ and $B$ respectively; a \emph{generalized horizontal distribution} $S \subset E_X$ such that $E_X = S \dsum S^\perp$.  (We call $S^\perp$ the \emph{generalized vertical distribution.})

Furthermore, we have the data of a map $\Psi : S \to E_B$, covering the projection $\psi$.   Finally, these data should satisfy a condition of \emph{local triviality}:

About any point $b \in B$, there is a neighbourhood $U \subset B$ such that there is an isomorphism of Courant algebroids,
$$E_B|_U \times E_F \;\iso\; (S \dsum S^\perp)|_{\psi^{-1}(U)} \,=\, E_X|_{\psi^{-1}(U)},$$
where this identification matches the respective terms; that is, $S$ is identified with copies of $E_B$, while $S^\perp$ is identified with copies of $E_F$.  We require that, with respect to this identification, $\Psi$ is just the projection of $S$ onto $E_B|_U$.
\end{defn}

\begin{example}
Let $F=S^2$, let $X = \R \times S^2$, let $B = \R$, and let $\psi : X \to B$ be the projection of $X$ to the first factor.  The trivialization of $X \to B$ determines a flat connection.  First, we consider the trivial ``generalization'' of this flat fibre bundle:

Let $E_F = \Tc F$, let $E_X = \Tc X$ and let $E_B = \Tc B$.  The horiztonal and vertical distributions in $TX$ induce decompositions $TX = (TB \times F) \dsum (B \times TF)$ and $T^*X = (T^*B \times F) \dsum (B \times T^*F)$, and thus the decomposition
$$E_X = (\Tc B \times F) \dsum (B \times \Tc F).$$
Let the generalized horizontal distribution be the first summand, and the generalized vertical distribution be the second summand.  With the projection of the first summand to $\Tc B$, this determines a generalized flat fibre bundle, as in the definition above.

Now we consider a further construction: let $\Z$ act on $E_B=\Tc\R$ by pushforward of translation.  Let $\omega$ be a volume form on $F=S^2$, and let $\Z$ act on $E_X$ such that $1$ acts, first by $B$-transform of the vertical component $E_F$ by $\omega$, and then by pushforward of horizontal translation.  These two $\Z$-actions are Courant automorphisms and, since the $B$-transform does not affect the generalized horizontal distribution, they commute with the Courant projection map.

Thus, $E_X/\Z \to E_B/\Z$ is a generalized flat $S^2$ bundle over $S^1$.  Its monodromy is nontrivial in the $B$-transforms, though the underlying (non-generalized) flat fibre bundle is just $S^2 \times S^1$.
\end{example}

We give two kinds of additional structure on generalized flat fibre bundles:
\begin{defn}\label{equivariant generalized complex flat bundle}
A generalized complex flat fibre bundle consists of a generalized flat fibre bundle as defined above, with the additional data of generalized complex structures $J_F$, $J$ and $J_B$ on $E_F$, $E_X$ and $E_B$ respectively, and the additional condition that the product in the local trivializations be a product of generalized complex structures.

Let $G$ be a Lie group.  A \emph{(left)-equivariant generalized complex flat fibre bundle with group $G$} (or \emph{generalized complex flat (left)-$G$-bundle} for short) consists of a generalized complex flat fibre bundle, with the additional data of (left) $G$-actions by generalized complex automorphism on $E_X$ and $E_F$, and the additional condition that the product in the local trivializations respect the group action.
\end{defn}

\begin{rem}\label{inspect local product}
We draw some immediate conclusions from Definition \ref{generalized flat bundle} by inspecting the local trivializations:
\begin{itemize}
\item $\pi(S)$ is complementary to the fibres, while $\pi(S^\perp)$ is tangent to the fibres.
\item $T^*X \cap S$ is the conormal bundle to the fibres.
\item Both $\pi(S)$ and $\pi(S^\perp) \subset TX$ are integrable distributions.
\item $\pi(S)$ is the horizontal distribution for a flat connection for the (non-generalized) fibre bundle $\psi : X \to B$.
\end{itemize}
Our tactic in understanding an isomorphism between generalized flat fibre bundles will be to see how it acts in the local trivializations.  The local trivializations are almost unique, in the following sense:
\end{rem}

\begin{prop}\label{local trivializations unique}
If
$$\tau_1 \;\textnormal{and}\; \tau_2 : E_X|_{\psi^{-1}(U)} \isoto E_B|_U \times E_F$$
are two local trivializations over the same connected neighbourhood $U \subset B$, and if $\tau_1 = \tau_2$ over a single point $x \in U$, then $\tau_1 = \tau_2$.
\end{prop}
\begin{proof}
In the non-generalized geometric context, this is a well-known fact.  Thus, $\tau_1$ and $\tau_2$ cover the same diffeomorphism.  As maps between Courant algebroids, they may differ by a $B$-transform, i.e., there is a closed 2-form $B$ such that
$$\tau_2^{-1}\comp\tau_1 = e^B.$$
Since $\tau_1$ and $\tau_2$ respect the decomposition of $E_X$ into $S \dsum S^\perp$, we may write $B = B_S + B_{S^\perp}$, where the terms vanish on $\pi(S^\perp)$ and $\pi(S)$ respectively.  But $B_S=0$, since $\tau_1|_S$ and $\tau_2|_S$ are determined by compatibility with the projection $\Psi : S \to E_B$.

$B_{S^\perp}$ is closed and vanishes on $\pi(S)$, therefore it is basic with respect to the horizontal foliation, i.e., it is horizontally flat.  But by hypothesis $B_{S^\perp}$ vanishes on some fibre, so it vanishes on all fibres.
\end{proof}

\begin{defn}
We define the \emph{deep bundle},
$$\psi \comp p : S^\perp \to B,$$
with fibre equal to the total space of $E_F$.  The deep bundle has a canonical flat connection coming from the local trivializations.
\end{defn}
Parallel transport by this flat connection respects the Courant algebroid structure, the local trivializations, and the generalized complex and equivariant structures when applicable.

\subsection{Classifications}\label{classifications}

\begin{defn}\label{monodromy}
Let $E_X \to E_B$ be a generalized flat fibre bundle with fibre $F$.  Fix a basepoint $b \in B$, and an identification $\psi^{-1}(b) \iso F$.  Each loop based at $b$ (up to homotopy) induces a Courant automorphism of $E_F$ by parallel transport via the flat connection on the deep bundle.  Then the connection induces a \emph{monodromy map}
$$\lambda : \pi_1(B,b) \to \Aut(E_F),$$
from the fundamental group of $B$ to the automorphism group of $E_F$.  If the bundle is also generalized complex, then $\lambda$ maps into $\Aut(E_F,J_F)$, the \emph{generalized complex} automorphisms of $E_F$.  And if furthermore the bundle is equivariant, then $\lambda$ maps into $\Aut^G(E_F,J_F)$, the \emph{equivariant} generalized complex automorphisms of $E_F$.
\end{defn}

\begin{defn}\label{generalized bundle isomorphism}
Let $\Psi_1 : E_{X_1} \to E_B$ and $\Psi_2 : E_{X_2} \to E_B$ be generalized flat fibre bundles over a common base $B$, with generalized horizontal distributions $S_1$ and $S_2$ respectively.  Then an \emph{isomorphism} between them is an isomorphism of Courant algebroids, $\Phi : E_{X_1} \to E_{X_2}$, such that $\Phi(S_1) = S_2$ and $\Phi\comp\Psi_1 = \Psi_2\comp\Phi$.

As we add more structure, we require isomorphisms to satisfy more conditions.  For example, fix a distinguished common fibre $F$ over $b \in B$, and fix the identification $E_F = S_1^\perp|_F = S_2^\perp|_F$; then with these extra data, an isomorphism $\Phi$ is as above, with the additional condition that $\Phi|_{E_F}$ is the identity.

If $\Psi_i : E_{X_i} \to E_B$ are \emph{generalized complex} flat fibre bundles (as in \ref{equivariant generalized complex flat bundle}), then an isomorphism $\Phi$ should furthermore respect the generalized complex structure; if they are also equivariant, then $\Phi$ should be an equivariant isomorphism.
\end{defn}

As is well-known, (non-generalized) flat bundles with a distinguished fibre are classified by their monodromy (see, eg., \cite{Milnor}).  Applying this standard result to the deep bundle, we will prove the following generalized version:
\begin{prop}\label{classification 1}
Suppose that $B$ is a connected manifold with exact Courant algebroid $E_B$, and let $b \in B$.  Let $F$ be a manifold with exact Courant algebroid $E_F$.

Then, \emph{up to isomorphism}, generalized flat fibre bundles over base $B$, with fibre $F$ over basepoint $b$ (and the corresponding Courant algebroids $E_B$ and $E_F$), are in one-to-one correspondence with the set of their monodromies,
$$\Hom(\pi_1(B,b),\Aut(E_F)).$$

Suppose furthermore that $E_B$ and $E_F$ have generalized complex structures $J_B$ and $J_F$.  Then, up to isomorphism, generalized complex flat fibre bundles over $B$, with fibre $F$ over $b$ (and the corresponding generalized complex structures), are in one-to-one correspondence with
$$\Hom(\pi_1(B,b),\Aut(E_F,J_F)).$$

Finally, suppose furthermore that $E_F$ has a generalized complex $G$-action.  Then, up to isomorphism, generalized complex flat $G$-bundles over $B$, with fibre $F$ over $b$ (and the corresponding generalized complex and equivariant structures), are in one-to-one correspondence with
$$\Hom(\pi_1(B,b),\Aut^G(E_F,J_F)).$$
\end{prop}

\begin{proof}[Proof; uniqueness, given the monodromy.]
Suppose that $X_1 \to B$ and $X_2 \to B$ are two generalized flat fibre bundles over base $B$, with fibre $F$ over basepoint $b$, and suppose furthermore that they have the same monodromy, $\lambda = \lambda_1 = \lambda_2$.  We will show that they are isomorphic.

Since $\lambda$ is just the monodromy for their deep bundles $S_1^\perp \to B$ and $S_2^\perp \to B$, according to the usual classification of flat bundles, there is a flat bundle isomorphism respecting the base and the distinguished fibre, 
$$\Phi|_{S_1^\perp} : S_1^\perp \isoto S_2^\perp.$$

$\Phi|_{S_1^\perp}$ covers a bundle map $\phi : X_1 \to X_2$.  We will specify how $\Phi|_{S_1^\perp}$ extends to a generalized isomorphism
$$\Phi : E_{X_1} \isoto E_{X_2}$$
covering $\phi$.

Let $x \in X_1$, and let $v \in E_{X_1}|_x$.  We see from the local trivializations that the generalized bundle projection map, $\Psi_2$, is a bijection from $S_2|_{\phi(x)}$ to $E_B|_{\psi_2(x)}$.  Thus, the map $\Psi_2^{-1} \comp \Psi_1$ takes the fibre $S_1|_x$ to $S_2|_{\phi(x)}$.

In this way we define $\Phi|_{S_1} : S_1 \isoto S_2$, and thus the whole map
$$\Phi = \Phi|_{S_1} + \Phi|_{S_1^\perp} : E_{X_1} \to E_{X_2}.$$
$\Phi$ respects local trivializations; from this we deduce that it is a Courant isomorphism, and that it is a flat generalized bundle map.

In the latter two cases of the proposition, it follows from the compatibility of $\Phi$ with local trivializations that it will respect the generalized complex and equivariant structures, respectively.
\end{proof}

\begin{proof}[Proof; existence.]
For any given monodromy $\lambda : \pi_1(B,b) \to \Aut(E_F)$, it is easy to exhibit an example bundle.  Let $\tilde{B}$ be the universal cover of $B$, and let $\tilde{X} = \tilde{B} \times F$ with Courant algebroid $E_{\tilde{X}} = E_{\tilde{B}} \times E_F$.  We choose the horizontal bundle $\tilde{S} = E_{\tilde{B}}$.

The monodromy $\lambda$, together with the action of $\pi_1(B,b)$ on $\tilde{X} = \tilde{B} \times F$, induces an action of the discrete group $\pi_1(B,b)$ on $E_{\tilde{X}} = E_{\tilde{B}} \times E_F$, by first applying a deck transformation to $E_{\tilde{B}}$ and then applying the corresponding monodromy to $E_F$.  Then $X := \tilde{X} / \pi(B,b)$ with Courant algebroid $E_X : = E_{\tilde{X}} / \pi(B,b)$ is a generalized flat fibre bundle over $B = \tilde{B} / \pi(B,b)$, with the desired monodromy.

In the latter two cases of the proposition we see that, since the monodromy respects the generalized complex and equivariant structures respectively, so will the deck transformations, and thus these respective structures will pass to the quotient.
\end{proof}

\section{Flat principal bundles from free and proper group actions}\label{free and proper group actions}

To justify the definitions given in the previous section, we should show that the setting mentioned in the introduction is actually an instance of them.  

\begin{defn}
Let $G$ be a Lie group.  We will say that a manifold $X$ with generalized complex structure $J$ on an exact Courant algebroid $E$ has a \emph{transversely symplectic} generalized complex $G$-action if $G$ acts by generalized complex automorphism on $E$, and the symplectic foliation of $X$ is everywhere complementary to the orbits of the induced action on $G$.

We will say this generalized complex action is free and proper if it is free and proper on $E$. 
\end{defn}

\begin{rem}\label{first conclusions}
As is well-known, for (non-generalized) manifolds, if $G$ acts freely and properly on $X$ then the quotient map $\psi : X \to X/G$ is a principal $G$-bundle.  And if a symplectic foliation is complementary to the orbits of $G$, then it induces a flat connection for $\psi : X \to X/G$ in the usual sense.  We note that since the orbits, and hence the complementary symplectic foliation, are of constant rank, the generalized complex structure is regular.
\end{rem}

We wish to define a generalized complex flat $G$-bundle from these data.  Recall that if $\sr{K}$ and $\sr{F}$ are complementary foliations of a manifold $X$, then there is a natural decomposition
$$T^*X = T^*\sr{K} \dsum T^*\sr{F}.$$

\begin{defn}
Suppose $J$ is a generalized complex structure on Courant algebroid $E\to X$, with symplectic foliation $\sr{K}$, and suppose that $G$ acts by transversely symplectic generalized complex action, with orbit foliation $\sr{F}$ on $X$.  Then the \emph{lifted symplectic distribution} is
\begin{equation}\label{define S}
S = J(T^*\sr{K}) \dsum T^*\sr{K} \subset E
\end{equation}
and the \emph{lifted orbit distribution} is $S^\perp$.

If this action is free and proper, then the \emph{generalized complex quotient} is a manifold $B=X/G$ with Courant algebroid $E_B=S/G$ and generalized complex structure $J_B=J|_S/G$.
\end{defn}

\begin{rem}
By definition, $S$ and $S^\perp$ are $J$-invariant, and since $\sr{K}$ is symplectic, we have that  $S\iso T\sr{K}\dsum T^*\sr{K}$; so $S$ is nondegenerate under the pairing and $E=S\dsum S^\perp$.  Then since the $G$-action is generalized complex, the Courant algebroid structure of $E$ passes to $E_B$, and $J_B$ is a generalized complex structure on $E_B$.
\end{rem}

\begin{defn}
If, as above, $G$ acts by a transversely symplectic generalized complex action on $X$, with lifted orbit distribution $S^\perp \subset E$, and if $F\subset X$ is a closed orbit, then the \emph{orbit Courant algebroid} of $F$ is $E_F = S^\perp|_F$, with pairing $\pair{\cdot,\cdot}$ and anchor map $\pi:E_F \to TF$ inherited from $E$.  The fact that $E_F$ also inherits a unique bracket, and that the \emph{orbit generalized complex structure} $J_F = J|_{E_F}$ is generalized complex, is a consequence of the following result. 
\end{defn}

\begin{prop}\label{is a bundle}
Let $X$ be a connected, finite-dimensional manifold with generalized complex structure $J$ on exact Courant algebroid $E$, and let $G$ act freely and properly by transversely symplectic generalized complex action.  Then $X$ is a generalized complex flat $G$-bundle, with generalized horizontal distribution $S$ equal to the lifted symplectic distribution and vertical distribution $S^\perp$, with the generalized complex quotient as base and, for any orbit $F\in X$, the orbit generalized complex structure as the fibre.

Furthermore, the orbit generalized complex structure is of complex type and the quotient generalized complex structure is symplectic (as in Examples \ref{twisted complex} and \ref{symplectic}).
\end{prop}
There is an analytic claim hiding in this result.  In particular, if the generalized complex structure were not integrable, then we should not expect the remaining hypotheses even to give a generalized flat fibre bundle.  However, we accomplish the necessary analysis by resorting to the local normal form theorem, Proposition \ref{normal form}.

\begin{proof}
As noted, $X\to B=X/G$ is a flat principal $G$-bundle in the non-generalized sense.  We construct local trivializations for the Courant algebroids $E\to E_B$, first locally in $X$, then locally near a fibre.

Let $F\subset X$ be a fibre (i.e., an orbit) and let $x \in F$.  (Since $X$ is connected, any choice of fibre will do.)  Since $J$ is regular, by Proposition \ref{normal form} there is a neighbourhood $U \subset X$ of $x$ on which $J$ is isomorphic to the standard structure on a rectangle in $\C^k \times \R^{2n}$.  Since the orbit foliation is complementary to the symplectic leaves and, by hypothesis, preserves the symplectic structure, by the augmented form of the theorem (Proposition \ref{augmented normal form}), we may choose this product structure so that the orbits map into copies of $\C^k$.

Thus we have a generalized complex trivialization,
$$\tilde\tau:E|_U\to\Tc\R^{2n} \times \Tc\C^k,$$
for a smaller bundle on $U$.  If we repeat the construction of the lifted symplectic distribution $S$, but now on $\Tc\R^{2n} \times \Tc\C^k$, we find that $S = \Tc\R^{2n} \times (\C^k,0) $ and $S^\perp = (\R^{2n},0) \times \Tc\C^k$.  Pulling this decomposition back to $E|_U$, we endow the orbit Courant algebroid $E_F|_U$ with the Courant algebroid structure coming from $\Tc\C^k$.  For this Courant algebroid structure, the orbit structure $J_F$ is generalized complex.

By the identification of $E_F|_U$ with a subset of $\Tc\C^k$ and of $E_B|_U$ with a subset of $\Tc\R^{2n}$, we have the trivialization
$$\tau:E|_U \to E_B|_{U/G} \times E_F|_{U\cap F}.$$

Now we consider the same construction for a neighbourhood $W$ of some other $y \in F$.  Let
$$\tau':E|_W \to E_B|_{W/G} \times E_F|_{W \cap F}$$
be the generalized complex trivialization on $W$.  Note that $\tau$ and $\tau'$ will agree on $E_U|_{U \cap W \cap F}$; thus, if $U \cap W$ is connected and contains points in $F$, then by Proposition \ref{local trivializations unique}, $\tau$ and $\tau'$ agree on $E_U|_{U \cap W}$.  Thus, by taking local trivializations near every point in $F$, we may construct a generalized complex local trivialization on a neighbourhood of $F$, for which $S$ is the generalized horizontal distribution and $S^\perp$ is the generalized vertical distribution.

Since the local trivializations were all Courant isomorphic, the algebroid structure thus defined on $E_F$ is unique.  It is clear from the local trivalizations that $J_F$ is complex and $J_B$ is symplectic.

Finally, it is straightforward that the $G$-action respects the local trivializations: the local trivializations were uniquely determined by the generalized complex geometry and the orbit foliation, which are invariant under the action.
\end{proof}

\begin{rem}
It is tempting to try to apply the classification result (Proposition \ref{classification 1}) to classify free and proper, transversely symplectic generalized complex $G$-manifolds---given data about the orbits, the quotient and the monodromy.  However, the setup in Proposition \ref{classification 1} fixes a base and a distinguished fibre \emph{a priori}.  As is well-known, there is some redundancy in this data, from the point of view of classifying $G$-manifolds.  There will be a nontrivial equivalence relation on the classifying data for bundles, which will tell us whether they are isomorphic as $G$-manifolds.  When the quotient is a surface, this is not hard to compute: see, for example, the classification of symplectic torus bundles in \cite{Pelayo} for the general idea.

We do not pursue this in detail.  Regardless, the classifying data (see Section \ref{classifications}) for generalized complex flat principal bundles, with symplectic base and complex fibre, do exhaust all possible free and proper, transversely symplectic generalized complex $G$-manifolds.
\end{rem}

\section{Principal torus bundles}\label{torus bundle section}

We would like to further probe the case of Section \ref{free and proper group actions} when the group $G$ is a torus.

\subsection{Twisted complex tori}

To begin, we enumerate invariant twisted complex structures on tori.  (For more general work along these lines, see \cite{AlekseevskyDavid}.)  A twisted complex structure induces a usual complex structure, and any invariant complex structure on a torus, $G$, may be determined by identifying the torus with $\R^{2n} / L \iso \C^n / L$, where $\Z^{2n} \iso L \subset \R^{2n}$ is some lattice, and then taking the standard complex structure, $I$, on $\R^{2n} \iso \C^{n}$.  Let $\gcg = \R^{2n} \iso \C^{n}$ be its Lie algebra.

\subsubsection*{Splitting and curvature}\label{splitting and curvature}
Suppose that $J_G$ is a generalized complex structure on a Courant algebroid $E_G$ on $G$ such that $J_G$ induces the complex structure $I$, and suppose that the self-action of $G$ lifts to a generalized complex action $\rho : G \to \Aut^G(E_G,J_G)$.

$J_G$ restricts to a linear generalized complex structure on the fibre $E_G|_e$ at the identity.  We choose a splitting, $E_G|_e \iso \Tc_e G = \gcg \dsum \gcg^*$, for which $J_G$ is the standard structure,
$$\left[\begin{array}{cc}
-I & 0 \\
0 & I^*
\end{array}\right].
$$
Then, by the correspondence between $\gcg$ and tangent spaces of $G$, we get a splitting, $E_G \iso TG \dsum T^*G$, for the whole algebroid.  Thus, without loss of generality, we suppose that $J_G$ is the standard generalized complex structure induced on $\Tc_H G$ by $I$.

This splitting may have a nontrivial curvature 3-form, $H$.  Since $H$ will be $G$-invariant, by the same correspondence as above, $H$ corresponds to a $3$-covector $\gch \in \^ ^3 \gcg^*$.  In fact, any $\gch \in \^ ^3 \gcg^*$ determines a closed, invariant 3-form.  (Similarly as above, if $G$ were non-Abelian, there would be a closedness condition on $\gch$.)  Recall that if $J_G$ is integrable then $\gch$ is real of bidegree $(1,2)+(2,1)$. Thus,

\begin{prop}\label{count tori}
An equivariant generalized complex torus of complex type and real dimension $2n$ is determined by a choice of
\begin{itemize}
\item lattice $L \subset \R^{2n},\;$ ($L \iso \Z^{2n}$), and
\item real curvature $\gch \in \Re\left(\^ ^{1,2} \gcg^* \dsum \^ ^{2,1} \gcg^*\right),\;$ ($\gcg = \R^{2n} \iso \C^n$).
\end{itemize}
\end{prop}

\begin{rem}
The real dimension of $\Re\left(\^ ^{1,2} \gcg^* \dsum \^ ^{2,1} \gcg^*\right)$ is $n^2(n-1)$, since $$\dim_\R(\^ ^{1,2} \gcg^*) = 2 {{n}\choose{1}} {{n}\choose{2}}$$ and reality fixes a choice in $\^ ^{2,1} \gcg^*$.
\end{rem}

\subsection{Automorphisms of the torus}

\begin{prop}\label{count automorphisms}
If $G$ is a torus with invariant generalized complex structure $J_G$ of complex type on an equivariant Courant algebroid $E_G$, then
$$\Aut^G(E_G,J_G) \iso G \times \Re(\^ ^{1,1} \gcg),$$
(where $\Re$ indicates the real subspace).
\end{prop}

\begin{proof}
As in subsection \ref{splitting and curvature}, without loss of generality, we suppose that $J_G$ is the standard generalized complex structure determined by a complex structure $I$ on $\Tc_H G$.  A Courant automorphism of $\Tc_H G$ may be decomposed into a $B$-transform and a pushforward by diffeomorphism.  If the diffeomorphism is equivariant, it is just the action of some $g \in G$, and is automatically generalized complex.  The $B$-transform corresponds to an invariant $(1,1)$-form.  (Since $B$ is invariant, it is automatically closed, and its action commutes with that of $G$.)
\end{proof}

\begin{rem}
If $\gcg = \R^{2n}$, then the real dimension of $\Re(\^ ^{1,1} \gcg)$ is $n^2$.
\end{rem}

\subsection{Summary: classifying data for bundles}

\begin{rem}
In the case of torus bundles, the monodromy maps, $\lambda : \pi_1(B,b) \to \Aut^G(E_G,J_G)$, are mapping into an Abelian group, as per Proposition \ref{count automorphisms}.  Thus, they are determined by their Abelianization.  That is, we may equivalently consider maps
$$\lambda : \pi_1^{\textrm{ab}}(B,b) \iso H_1(B) \to \Aut^G(E_G,J_G),$$
where $H_1(B)$ is the first homology of the base.  (See \cite{Pelayo} for more details.)
\end{rem}

As per Propositions \ref{classification 1}, \ref{is a bundle}, \ref{count tori} and \ref{count automorphisms}, and the above remark, generalized complex principal torus bundles whose symplectic leaves are complementary to the fibres are classified as follows:
\begin{enumerate}
\item Choose a symplectic base manifold, $B$, with basepoint $b$.
\item Choose a lattice $L \subset \R^{2n},\;$ ($L \iso \Z^{2n}$).  This determines a torus $G = \R^{2n}/L$ with invariant complex structure.
\item Choose a real curvature $\gch \in \Re\left(\^ ^{2,1} \gcg^* \dsum \^ ^{1,2} \gcg^*\right),\;$ ($\gcg = \R^{2n} \iso \C^n$).  This determines an invariant generalized complex structure $J_G$ on $G$.
\item Then generalized complex principal $G$-bundles over base $B$, with distinguished generalized complex fibre $G$ over basepoint $b$, are (up to isomorphism) in one-to-one correspondence with the set of monodromies,
\begin{eqnarray*}
&& \Hom(H_1(B),\Aut^G(E_G,J_G)) \\
&\iso& \Hom(H_1(B),G \times \Re(\^ ^{1,1} \gcg)) \\
&\iso& \Hom(H_1(B),G \times \R^{n^2})
\end{eqnarray*}
\end{enumerate}

For example, if $B$ is a compact surface of genus $k$, then $H_1(B) \iso \Z^{2k}$, and
\begin{eqnarray*}
&& \Hom(H_1(B),\Aut^G(E_G,J_G)) \\
&\iso& \Hom(\Z^{2k},G \times \R^{n^2}) \\
&\iso& G^{2k} \times \left(\R^{n^2}\right)^{2k}
\end{eqnarray*}
The second term represents degrees of freedom which do not occur in the case of non-generalized bundles, neither in the case of ``lifted actions'' in the sense of \cite{BursztynCavalcantiGualtieri}.



\addcontentsline{toc}{chapter}{Bibliography}
\bibliographystyle{plain}


\end{document}